\documentclass[11pt]{article}
\usepackage{amsmath,amsthm,amssymb}
\usepackage{lineno,hyperref}
\usepackage{geometry}
\usepackage{color}
\usepackage{listings}

\geometry{left=2.5cm,right=2.5cm,top=2.0cm,bottom=3.0cm}
\parskip=4pt

\newtheorem{lemma}{Lemma}[section]

\newtheorem{define}{Definition}[section]
\newtheorem{remark}{Remark}[section]
\newtheorem{proc}{Procedure}[section]

\def\R{{\mathbb{R}}}
\def\Q{{\mathbb{Q}}}

\def\N{{\mathbb{N}}}
\def\a{{\mathbf{a}}}
\def\ord{{\rm{ord}}}

\def\P{{\mathcal{P}}}
\def\C{{\mathcal{C}}}
\def\V{{\mathcal{V}}}

\def\M{{\mathcal{M}}}

\def\span{{\hbox{\rm{Span}}}}

\def\d{{\hbox{\rm{d}}}}

\lstdefinelanguage{Maple}{
   keywords={if, while, do, else, end, for, from, to,then},
   keywordstyle=\color{blue}\bfseries,
   ndkeywords={class, export, boolean, throw, implements, import, this},
   ndkeywordstyle=\color{darkgray}\bfseries,
   identifierstyle=\color{black},
   sensitive=false,
   comment=[l]{//},
   morecomment=[s]{/*}{*/},
   commentstyle=\color{purple}\ttfamily,
   stringstyle=\color{red}\ttfamily,
   morestring=[b]',
   morestring=[b]"
}

\lstset{
   language=Maple,
   backgroundcolor=\color{lightgray},
   extendedchars=true,
   basicstyle=\footnotesize\ttfamily,
   showstringspaces=false,
   showspaces=false,
   numbers=left,
   numberstyle=\tiny,
   numbersep=9pt,
   tabsize=2,
   breaklines=true,
   showtabs=false,
   captionpos=b
}

\lstdefinelanguage{SOStools}{
   keywords={syms,sosprogram,monomials,sosineq,sossetobj,sossolve,sosgetsol,sospolyvar},
   keywordstyle=\color{blue}\bfseries,
   ndkeywords={syms,sosprogram,monomials,sosineq,sossetobj,sossolve,sosgetsol},
   ndkeywordstyle=\color{blue}\bfseries,
   identifierstyle=\color{black},
   sensitive=false,
   comment=[l]{//},
   morecomment=[s]{/*}{*/},
   commentstyle=\color{purple}\ttfamily,
   stringstyle=\color{red}\ttfamily,
   morestring=[b]',
   morestring=[b]"
}

\lstset{
   language=SOStools,
   backgroundcolor=\color{white},
   extendedchars=true,
   basicstyle=\footnotesize\ttfamily,
   showstringspaces=false,
   showspaces=false,
   numbers=none,
   numberstyle=\tiny,
   numbersep=9pt,
   tabsize=2,
   breaklines=true,
   showtabs=false,
   captionpos=b
}

\modulolinenumbers[5]

\makeatletter

\makeatother










\begin{document}


\title{\bf Prove Costa's Entropy Power Inequality and
High Order Inequality for Differential Entropy
\\ with Semidefinite Programming}
%
\author{Laigang Guo, Chun-Ming Yuan, Xiao-Shan Gao\\
KLMM, Academy of Mathematics and Systems Science\\
 Chinese Academy of Sciences, Beijing 100190, China\\
University of Chinese Academy of Sciences, Beijing 100049, China}
\date{\today}
\maketitle

\begin{abstract}
\noindent
Costa's entropy power inequality is an important generalization of
Shannon's  entropy power inequality.
Related with Costa's entropy power inequality and a conjecture proposed by McKean in 1966, Cheng-Geng recently conjectured that
$D(m,n): (-1)^{m+1}(\d^m/\d^m t)H(X_t)\ge0$,
where $X_t$ is the $n$-dimensional random variable in Costa's entropy power inequality and $H(X_t)$ the differential entropy of $X_t$.
$D(1,n)$ and $D(2,n)$ were proved by Costa as consequences of Costa's entropy power inequality.
Cheng-Geng proved $D(3,1)$ and $D(4,1)$.
In this paper, we propose a systematical procedure to prove $D(m,n)$
and Costa's entropy power inequality based on semidefinite programming.
Using software packages based on this procedure, we prove $D(3,n)$ for $n=2,3,4$
and give a new proof for Costa's entropy power inequality.
We also show that with the currently known constraints,
$D(5,1)$ and $D(4,2)$ cannot be proved with the procedure.

\vskip10pt\noindent{\bf Keyword.}
Costa's entropy power inequality, differential entropy, completely monotone function, heat equation, Mckean's conjecture.
\end{abstract}



\section{Introduction}

Shannon's {\em entropy power inequality (EPI)}
is one of the most important information inequalities~\cite{Shannon1948}, which has many proofs, generalizations, and applications~\cite{Stam1959,Blachman1965,Lieb1978,VerduGuo2006,Rioul2011,Bergmans1974,Zamir1993,Liu2007,Wang2013}.
In particular,
Costa presented a stronger version of the EPI in his seminal paper~\cite{Costa1985}.

Let $X$ be an n-dimensional random variable with {\em probability density} $p(x)$.
For $t>0$, define $X_t\triangleq X+Z_t$, where $Z_t$ is an independent standard Gaussian random vector and is normally distributed with covariance matrix $t\times I$, that is, $Z_t\thicksim N_n(0,tI)$.
The {\em probability density} of $X_t$ is
\begin{equation}
\label{1.7}
p_t(x_t)=\dfrac{1}{(2\pi t)^{n/2}}\int_{\mathbf{R}^n}p(x)\exp\left(-\dfrac{\|x_t-x\|^2}{2t}\right) \d x_t,
\end{equation}
where  $\|\cdot\|$ denotes the Euclidean norm.
The {\em differential entropy} of $X_t$ is defined as
\begin{equation}
\label{1.1}
H(X_t)=-\int_{-\infty}^{+\infty} p_t(x_t)\log p_t(x_t)\d x_t.
\end{equation}
%
Costa~\cite{Costa1985} proved that
the {\em entropy power} of $X_t$, given by
\begin{equation}\label{1.5}
N(X_t)=\dfrac{1}{2\pi e}e^{(2/n)H(X_t)}
\end{equation}
is a concave function in $t$. More precisely,
Costa proved $\frac{\d}{\d t}N(X_t)\ge0$ and $\frac{\d^2}{\d^2 t}N(X_t)\le0$.
%
%

Due to its importance, several new proofs for Costa's EPI were given.
Dembo~\cite{Dembo1989} gave a simple proof for Costa's EPI via the Fisher information inequality.
Villani~\cite{Villani2000} proved Costa's EPI with advanced techniques as well as the heat equation.
Cheng and Geng  proved that the third order derivative of $H(X_t)$ is nonnegative and the fourth order derivative of $H(X_t)$ is nonpositive in the case of univariate random variables~\cite{Cheng2015}.

Related with a conjecture proposed by McKean in 1966~\cite{McKean1966}, Cheng-Geng~\cite{Cheng2015} conjectured
that $H(X_t)$ is {\em completely monotone} in $t$, that is,
\begin{equation}
\label{eq-D}
D(m,n): (-1)^{m+1}(\d^m/\d^m t)H(X_t)\ge0.
\end{equation}
Costa's EPI implies $D(1,n)$ and $D(2,n)$~\cite{Costa1985}
and Cheng-Geng proved $D(3,1)$ and $D(4,1)$~\cite{Cheng2015}.

In this paper, we propose a systematical and effective procedure to prove
Costa's EPI and $D(m,n)$.
The procedure consists of three main ingredients.
First, a systematic method is proposed to compute constraints
$R_i,i=1,\ldots,N$ satisfied by $p_t(x_t)$ and its derivatives.
Second, proof for $D(m,n)$ or Consta's EPI is reduced to the following problem
\begin{equation}
\label{eq-prob}
\exists (p_1,\ldots,p_N)\in\R^N \hbox{ s.t. } E -\sum_{i=1}^N p_i R_i = S
\end{equation}
where $E$ and $R_i$ are quadratic forms in certain newly introduced variables
and $S$ is a sum of squares (SOS) of linear forms in the new variables.
%
%
Third, problem \eqref{eq-prob} is solved  with the semidefinite programming (SDP)~\cite{Boyd1,Boyd2}.
There exists no  guarantee that the procedure will generate a proof,
but when succeeds, it gives an exact and strict proof for the problem under consideration.

Using the procedure proposed in this paper, we first give a new proof
for Costa's EPI which implies $D(2,n)$
and then prove $D(3,n)$ for $n=2,3,4$.
We also show that $D(5,1)$ and $D(4,2)$ cannot be proved under the current constraints
using this approach, detailed discussion about which can be found in Section \ref{sec-conc}.

\begin{table}[ht]
\centering
\begin{tabular}{lcccccccc}
\hline
 &$D(3,1)$ & $D(4,1)$ & $D(5,1)$ & $D(3,2)$ & $D(3,3)$ & $D(3,4)$& $D(4,2)$ & Costa's EPI\\
Vars & 3 & 5 & 7 & 14 & 38 & 80 & 80 & 6\\
Cons & 2 & 8 & 16& 63 & 512 & 1966 & 417 & 8\\
Time & 0.22 & 0.30 & 0.28 & 0.37 & 0.56 & 1.80 & 0.59 & 0.2\\
Proof & Yes & Yes & No & Yes & Yes & Yes & No & Yes\\
\hline
\end{tabular}
\caption{Data in computing the SOS  with SDP}
\label{tab1}
\end{table}

In Table \ref{tab1}, we give the  data for computing the SOS representation \eqref{eq-prob}
using the Matlab software package in Appendix B,
where
Vars is the number of variables,
Cons is the number of constraints,
Time is the running time in seconds collected on a desktop PC with a 3.40GHz CPU
and 16G memory,
and Proof means whether a proof is given.

The procedure is inspired by the work \cite{Costa1985,Villani2000,Cheng2015},
and uses basic ideas introduced therein.
The main difference is that heuristics are used to write derivatives of $N(X_t)$ or $H(X_t)$
as SOSs in \cite{Costa1985,Cheng2015}. We quote a remark from \cite{Cheng2015}
``One can apply the same technique to deal with the fifth derivative, or even
higher. However, the manipulation by hand is huge and hence
it is prohibitive in computational cost."
%
%
%
As shown in Table \ref{tab1}, our procedure basically solves the difficulty
of manipulating large expressions in solving problem \eqref{eq-prob} related with Costa's EPI.

The rest of this paper is organized as follows.
In Section 2, we give the method to generate constraints and the proof procedure.
In Section 3, a new proof for Costa's EPI is given based on the procedure.
In Section 4, $D(3,n)$ is proved for $n=2,3,4$ based on the procedure.
In Section 5, conclusion and discussion are presented.

\section{A procedure to prove entropy inequalities}
In this section, we give a general procedure to prove  Costa's EPI
and $D(m,n)$.

\subsection{Preliminaries}
Firstly, we give some notations. Let $x_t=[x_{1,t},x_{2,t},\ldots,x_{n,t}]$ and
$$
\begin{array}{ll}
\d^{(i)} x_t=\d x_{1,t}\d x_{2,t}\ldots \d x_{i-1,t}\d x_{i+1,t}\ldots \d x_{n,t},\ i=1,2\ldots,n.
\end{array}
$$
Let $[n]_0 = \{0,1,\ldots,n\}$ and $[n] = \{1,\ldots,n\}$.
To simplify the notations, we use $p_t$ to denote $p_t(x_t)$ in the rest of the paper.
Denote
$$\mathcal{P}_n =\{\frac{\partial^h p_t}{\partial^{h_1} x_{a_1,t}\cdots \partial^{h_n} x_{a_n,t}}:
h = \sum_{i=1}^n h_i, h_i\in \N\}$$
to be the set of all derivatives of $p_t$ with respect to the differential operators
$\frac{\partial}{\partial x_{i,t}},i=1,\ldots,n$
and $\R[\mathcal{P}_n]$ to be the set of polynomials in $\mathcal{P}_n$.
For $v\in\mathcal{P}_n$, let $\ord(v)$   be the order of $v$
and we use $p_t^{(h)}$ to denote an $h$th-order derivative of $p_t$  if no confusing is caused.
For a monomial $\prod_{i=1}^r v_i^{d_i}$ with $v_i\in {\mathcal{P}}$,
its {\em degree}, {\em order}, and {\em total order}
are defined to be $\sum_{i=1}^r d_i$, $\max_{i=1}^r \ord(v_i)$,
 and  $\sum_{i=1}^r d_i\cdot \ord(v_i)$, respectively.
%

A polynomial in $\R[\mathcal{P}_n]$ is called a $k$th-order
{\em differentially homogenous polynomial} or simply
a  $k$th-order {\em differential form},
if all its monomials have degree $k$ and total order $k$.
Let $\M_{k,n}$ be the set of all monomials which have degree $k$ and total order $k$.
Then all the $k$th-order differential  forms generate
an $\R$-linear vector space, which is denoted as $\span_\R(\M_{k,n})$.

We will use $\R$-Gaussian elimination in $\span_\R(\M_{k,n})$ by treating the monomials as variables.
We always use the {\em lexicographic order for the monomials} to be defined below if not mentioned otherwise.
Consider two distinct derivatives
$v_1=\frac{\partial^k p_t}{\partial^{h_1} x_{1,t}\cdots \partial^{h_n} x_{n,t}}$
and
$v_2=\frac{\partial^k p_t}{\partial^{s_1} x_{1,t}\cdots \partial^{s_n} x_{n,t}}$.
We say $v_1>v_2$ if
$h_l>s_l$ and $h_j=s_j$ for $j=l+1,\ldots,n$.
Consider two distinct monomials $m_1=\prod_{i=1}^{r} v_i^{d_i}$
and $m_2=\prod_{i=1}^{r} v_i^{e_i}$,
where $v_i\in {\mathcal{P}}_n$ and   $v_i< v_j$ for $i < j$.
We define $m_1 > m_2$ if  $d_l > e_l$, and $d_i = e_i$ for $i=l+1,\ldots,r$.
%

Costa~\cite{Costa1985} proved the following basic properties for $p_t$ and $H(X_t)$:
\begin{eqnarray}
&\frac{\d p_t}{\d t} =\frac{1}{2}\nabla^2p_t,\label{H1}\\
&\frac{\d H(X_t)}{\d t}=\frac{1}{2}\int_{\mathbf{R}^n}\frac{\|\nabla p_t\|^2}{p_t}\d x_t,\label{H2}\\
&\mathbb{E}[\nabla^2\log p_t(x_t)]=-\int_{\mathbf{R}^n}\frac{\|\nabla p_t(x_t)\|^2}{p_t(x_t)}\d x_t.
\label{H3}
\end{eqnarray}
where
$\nabla p_t =(\frac{\partial p_t}{\partial x_{1,t}},\ldots,
\frac{\partial p_t}{\partial x_{n,t}})$, $\nabla^2p_t=\sum\limits_{i=1}^{n}\frac{\partial^2p_t}{\partial^2 x_{i,t}}$,
and
$\mathbb{E}[\nabla^2\log p_t(x_t)]  :=
\int_{\mathbf{R}^n}p_t(x_t)\nabla^2\log p_t(x_t)\d x_t$
is the {\em expecttation} with respect to $x_t$.

Equation \eqref{H1} shows that $p_t$ satisfies the {\em heat equation}
and \eqref{H2} implies $D(1,n)$: $\frac{\d }{\d t} H(X_t)\ge0$.
The following lemma gives the general descrption for
$\frac{\d^m }{\d^m t} H(X_t)$.

\begin{lemma}
\label{lm-pr2}
For $m\in\N_{m>1}$, we have
\begin{equation}
\label{eq-tt1}
\begin{array}{ll}
\frac{\d^mH(X_t)}{\d^m t}
=\displaystyle{\int_{\mathbf{R}^n}\frac{F_{m,n}}{p_t^{2m-1}(x_t)}\d x_t},\\
F_{m,n} = \frac{p_t^{2m-1}}{2} \frac{\d^{m-1}}{\d^{m-1} t}\left(\frac{\|\nabla p_t\|^2}{p_t}\right)
=\sum_{a_1=1}^n\cdots\sum_{a_m=1}^n F_{m,n,\a_m}
\end{array}
\end{equation}
where $\a_m=(a_1,\ldots,a_m)$ and $F_{m,n,\a_m}$ is a $2m$th-order differentially form in $\R[\P_{m,n}]$ for
\begin{equation}
\label{eq-pm}
{\P}_{m,n} =\{
\frac{\partial^h p_t}{\partial^{h_1} x_{a_1,t}\cdots \partial^{h_{m}} x_{a_m,t}}:
h=\sum_{i=1}^m h_i\in[2m-1]_0; a_i\in[n],i=1,\ldots,m\}.
\end{equation}
\end{lemma}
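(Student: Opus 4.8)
The plan is to start from the first-order identity \eqref{H2} and differentiate it $m-1$ further times under the integral sign, and then to establish the claimed structural properties of $F_{m,n,\a_m}$ by an induction that tracks degree and total order through a single recursion obtained from the heat equation \eqref{H1}. Writing $\Phi_t = \|\nabla p_t\|^2/p_t = \sum_{a_1=1}^n \phi_{a_1}$ with $\phi_{a_1} = (\partial p_t/\partial x_{a_1,t})^2/p_t$, equation \eqref{H2} reads $\frac{\d H(X_t)}{\d t} = \frac12\int_{\mathbf{R}^n}\Phi_t\,\d x_t$. Differentiating $m-1$ times and interchanging $\frac{\d^{m-1}}{\d^{m-1}t}$ with the integral gives $\frac{\d^m H(X_t)}{\d^m t} = \frac12\int_{\mathbf{R}^n}\frac{\d^{m-1}}{\d^{m-1}t}\Phi_t\,\d x_t$. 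Since $\frac{\d^{m-1}}{\d^{m-1}t}\Phi_t$ is a rational expression whose only denominator is a power of $p_t$, multiplying numerator and denominator by $p_t^{2m-1}$ and setting $F_{m,n} = \frac{p_t^{2m-1}}{2}\frac{\d^{m-1}}{\d^{m-1}t}\Phi_t$ yields exactly \eqref{eq-tt1}; it then remains to show that $p_t^{2m-1}$ clears the denominator and to identify the form of each summand $F_{m,n,\a_m}$.

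The key computational device is a clean recursion for the single-index pieces. For fixed $a_1$ set $h_k = p_t^{2k+1}\,\frac{\d^k}{\d^k t}\phi_{a_1}$. A direct differentiation, using \eqref{H1} in the form $\frac{\d p_t}{\d t} = \frac12\nabla^2 p_t$, gives $h_{k+1} = p_t^2\,\frac{\d h_k}{\d t} - (2k+1)\,p_t\,h_k\,\frac{\d p_t}{\d t}$, where every remaining $t$-derivative on the right is again eliminated through \eqref{H1} via $\frac{\d}{\d t}\big(\tfrac{\partial^h p_t}{\partial^{h_1}x_{a_1,t}\cdots}\big) = \frac12\sum_{b=1}^n \tfrac{\partial^{h+2}p_t}{\partial^2 x_{b,t}\,\partial^{h_1}x_{a_1,t}\cdots}$. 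Thus each application of $\frac{\d}{\d t}$ introduces exactly one new summation index $b$, which I rename $a_2,\dots,a_m$ over the $m-1$ steps; collecting these with $a_1$ produces the $m$-fold sum and the summands $F_{m,n,\a_m}$ (so that $F_{m,n}=\sum_{a_1}\cdots\sum_{a_m}F_{m,n,\a_m}$). In particular each $h_k$ is a polynomial in $\P_n$, so $p_t^{2k+1}$, and hence $p_t^{2m-1}$ for $k=m-1$, indeed clears the denominator.

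I would then prove by induction on $k$ that every monomial of $h_k$ has both degree and total order equal to $2k+2$, and that every derivative factor in it has order at most $2k+1$ and differentiates only with respect to the $k+1$ variables introduced so far. The base case $h_0 = (\partial p_t/\partial x_{a_1,t})^2$ is immediate. For the inductive step I inspect the two terms of the recursion: applying $\frac{\d}{\d t}$ to a monomial raises the order of one factor by $2$ (total order $+2$, degree unchanged) while the prefactor $p_t^2$ adds degree $+2$ and order $+0$, so the first term lands in degree and total order $2k+4$; the second term multiplies by $p_t$ (degree $+1$, order $+0$) and by $\frac12\nabla^2 p_t$ (degree $+1$, order $+2$), again reaching $2k+4$. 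The maximal order grows by $2$ per step, from $1$ to $1+2(m-1)=2m-1$, which gives the membership in $\P_{m,n}$ of \eqref{eq-pm}. Setting $k=m-1$ shows each $F_{m,n,\a_m}$ is a $2m$th-order differential form in $\R[\P_{m,n}]$, completing the structural claim.

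The main obstacle is analytic rather than algebraic: justifying the interchange of $\frac{\d^{m-1}}{\d^{m-1}t}$ with the integral over $\mathbf{R}^n$ and the vanishing of the implicit boundary contributions. For $t>0$ the density $p_t$ is a Gaussian convolution, hence smooth and strictly positive with all spatial derivatives decaying faster than any polynomial; this should furnish, for $t$ in any compact subinterval of $(0,\infty)$, an integrable dominating bound on the relevant difference quotients, thereby legitimizing the differentiation under the integral sign. Once this interchange is secured, the purely combinatorial bookkeeping — the $m$-fold summation and the degree/total-order homogeneity — follows routinely from the single recursion above.
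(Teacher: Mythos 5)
Your proposal is correct and follows essentially the same route as the paper's proof: your recursion $h_{k+1}=p_t^2\frac{\d h_k}{\d t}-(2k+1)p_t h_k\frac{\d p_t}{\d t}$ is exactly the paper's inductive relation $F_{m,n}=p_t^2\frac{\d F_{m-1,n}}{\d t}-(2m-3)p_t\frac{\d p_t}{\d t}F_{m-1,n}$, and your degree/total-order bookkeeping via the heat equation matches the paper's use of \eqref{eq-pm3}. The only differences are presentational (you run the induction on the per-index pieces $\phi_{a_1}$ rather than on $F_{m,n}$ directly, and you flag the differentiation-under-the-integral issue, which the paper passes over silently).
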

\begin{proof}
We prove the lemma by induction on $m$.
Equation \eqref{H2} implies that the lemma is valid for $m=1$.
Assume that the lemma is valid for $m-1$, that is,
$F_{m-1,n}=\sum_{a_1=1}^n\cdots\sum_{a_{m-1}=1}^n F_{m-1,n,\a_{m-1}}
$ is a $2(m-1)$th-order differential form in $\R[\P_{m-1,n}]$. So we have
\begin{equation*}
\frac{\d^{m}H(X_t)}{\d^{m} t}
={\int_{\mathbf{R}^n}
 \frac{\d}{\d t}\left(\frac{F_{m-1,n}}{p_t^{2m-3}}\right)\d x_t}
={\int_{\mathbf{R}^n}\frac{F_{m,n}}{p_t^{2m-1}}\d x_t},
\end{equation*}
where $F_{m,n}=p_t^2\frac{\d F_{m-1,n}}{\d t}-(2m-3)p_t\frac{\d p_t}{\d t}F_{m-1,n}$.
For any $p_t^{(h)}\in\P_{m-1,n}$, we have
\begin{equation}
\label{eq-pm3}
\frac{\d p_t^{(h)}}{\d t}
= (\frac{\d p_t}{\d t}) ^{(h)}
\overset{\eqref{H1}}{=} 1/2\sum\limits_{i=1}^{n}\frac{\partial^2 p_t^{(h)}}{\partial^2 x_{i,t}}\in\R[\P_{m,n}]
\end{equation}
is of total order $h+2$.
From \eqref{eq-pm3} and the induction hypothesis, we can easily very
that $F_{m,n}$ is a $2m$th-order differential form in $\R[\P_{m,n}]$ and has the form in \eqref{eq-tt1}.
\end{proof}

\subsection{Compute the constraints}
In this section, we show how to compute the constraints systematically.

\begin{define}\label{def-41}
An $m$th-order  {\em  constraint} is a $2m$th-order differential form
$R$ in $\R[\P_{n}]$
such that $\int_{\mathbf{R}^n}\ \frac{R}{p_t^{2m-1}}\d x_t=0$.
%
%
%
\end{define}

There exist two types of constraints and the first type  comes from the following lemma,
whose proof can be found in Appendix A.
\begin{lemma}\label{lemma4}
Let $a,r,m_i,k_i \in \N_{>0}$ and $p_t^{(m_i)}$ an $m_i$th-order derivative of $p_t$. Then
\begin{equation}\label{lem4}
\begin{array}{ll}
\displaystyle{\int_{-\infty}^{\infty}\ldots\int_{-\infty}^{\infty}p_t\left[\prod\limits_{i=1}^{r}
\frac{[p_t^{(m_i)}]^{k_i}}{p_t^{k_i}}\right]\Bigg|_{x_{a,t}=-\infty}^{\infty}\d x_t^{(a)}}=0.\\[0.5cm]
\end{array}
\end{equation}
\end{lemma}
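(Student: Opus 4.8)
The plan is to show that the bracketed quantity vanishes at $x_{a,t}=\pm\infty$ \emph{pointwise} in the remaining coordinates, so that the difference inside the bracket is identically zero and the outer integral over $\d x_t^{(a)}$ is the integral of zero. Writing $K=\sum_{i=1}^r k_i$, the integrand equals
\begin{equation*}
p_t\prod_{i=1}^r\frac{[p_t^{(m_i)}]^{k_i}}{p_t^{k_i}}
=p_t\prod_{i=1}^r\left(\frac{p_t^{(m_i)}}{p_t}\right)^{k_i},
\end{equation*}
so everything reduces to two facts about the heat-smoothed density: that $p_t$ decays like a Gaussian in each coordinate direction, and that each logarithmic derivative $p_t^{(m_i)}/p_t$ grows at most polynomially. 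Granting these, the product is a Gaussian times a polynomial, which tends to $0$ as $x_{a,t}\to\pm\infty$ with the other coordinates held fixed; hence the bracket is $0-0=0$ and the claim follows immediately.

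To establish the two facts I would use the convolution representation behind \eqref{1.7}, namely $p_t=p*g_t$ with heat kernel $g_t(z)=(2\pi t)^{-n/2}\exp(-\|z\|^2/(2t))$. Since $g_t>0$, the density $p_t$ is smooth and \emph{strictly positive} everywhere, so the ratios above are well defined and no hidden $0/0$ cancellation occurs. Differentiating under the integral sign, every $p_t^{(m_i)}$ has the form $\int p(y)\,\partial^{m_i}g_t(x_t-y)\,\d y$, and because each $\partial_{z_j}$ contributes a factor $-(z_j)/t$, after $m_i$ differentiations one obtains $\partial^{m_i}g_t(z)=P_{m_i}(z)\,g_t(z)$ for a polynomial $P_{m_i}$ of degree $m_i$, independent of which coordinates are differentiated.

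Assuming $p$ is supported in a ball of radius $R$ (the general case follows under the standard tail/moment hypotheses that make $H(X_t)$ and its derivatives finite), for $\|x_t\|$ large one has $\|x_t-y\|\le\|x_t\|+R$ over the support, whence $|P_{m_i}(x_t-y)|\le C(1+\|x_t\|)^{m_i}$ uniformly in $y$. Factoring this bound out of the integral gives
\begin{equation*}
|p_t^{(m_i)}(x_t)|\le C(1+\|x_t\|)^{m_i}\int p(y)\,g_t(x_t-y)\,\d y=C(1+\|x_t\|)^{m_i}\,p_t(x_t),
\end{equation*}
so $|p_t^{(m_i)}/p_t|\le C(1+\|x_t\|)^{m_i}$, the desired polynomial growth; simultaneously $p_t(x_t)\le(2\pi t)^{-n/2}\exp(-(\|x_t\|-R)^2/(2t))$, the desired Gaussian decay. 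Combining them, the integrand is bounded by a Gaussian times $(1+\|x_t\|)^{\sum_i k_i m_i}$, which vanishes as $x_{a,t}\to\pm\infty$.

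The main obstacle is not the limit computation, which is routine once the estimates are in hand, but pinning down the regularity of $p$ that legitimizes differentiation under the integral and guarantees the decay, since for merely integrable $p$ the logarithmic derivatives need not be controlled. The cleanest route, which I would take, is to prove the identity first for compactly supported (or Schwartz) $p$, where the ``Gaussian beats polynomial'' estimate above is completely rigorous, and then observe that these are precisely the hypotheses under which the higher derivatives of $H(X_t)$ in Lemma \ref{lm-pr2} are finite, so the boundary terms discarded by the constraint machinery are genuinely zero in the regime of interest.
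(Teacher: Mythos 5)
Your estimates are correct and give a complete, elementary proof \emph{when $p$ is compactly supported}: there $\partial^{m}g_t(z)=P_m(z)g_t(z)$, the polynomial factor $|P_{m_i}(x_t-y)|$ is uniformly controlled over the support, and Gaussian decay beats polynomial growth, so the bracket vanishes pointwise and the lemma follows. The genuine gap is the passage to general $p$. The lemma is invoked for an arbitrary density (nothing in Costa's EPI or in $D(m,n)$ restricts $X$ to bounded support), and your two justifications for the reduction do not hold up. First, the claim that compactly supported or Schwartz $p$ are ``precisely the hypotheses under which the higher derivatives of $H(X_t)$ are finite'' is false: a Gaussian $p$ has all derivatives of $H(X_t)$ finite and is not compactly supported, so proving the lemma only in the compactly supported case genuinely weakens every result in the paper. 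Second, no approximation argument is offered for deducing the identity for general $p$ from the compact case, and such an argument is not routine, because the boundary terms involve the ratios $p_t^{(m_i)}/p_t$, which are not stable under the natural truncations of $p$. Note also that your key step is exactly what breaks down: the constant $C$ you factor out of the integral is $\sup_y|P_{m_i}(x_t-y)|/(1+\|x_t\|)^{m_i}$ over $y$ in the support of $p$, and this supremum is infinite once the support is unbounded, so the pointwise polynomial bound $|p_t^{(m_i)}/p_t|\le C(1+\|x_t\|)^{m_i}$ is not available in general.

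The paper's proof (Appendix A) avoids pointwise decay altogether and works for every density. It writes $p_t^{(m_i)}/p_t$ as a conditional expectation of a polynomial in the Gaussian noise, $p_t^{(m_i)}/p_t=\mathbb{E}[\mathrm{poly}(Z_t)\mid X_t=x_t]$, which requires no assumption on the tails of $p$; it then shows $\mathbb{E}\{\prod_i|p_t^{(m_i)}/p_t|^{k_i}\}<\infty$ using Jensen, Cauchy--Schwarz and the finiteness of Gaussian moments. Finiteness of this full $n$-dimensional integral forces $\liminf_{x_{a,t}\to\infty}$ of the $(n-1)$-fold slice integrals to be zero, and Fatou's lemma then dominates the boundary integral by that liminf. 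To repair your argument you would either have to prove polynomial growth of the logarithmic derivatives for general $p$ (which, as a pointwise statement, is essentially hopeless) or replace the pointwise limit computation by this integrated Fatou-type argument.
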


The following lemma shows how to compute an $m$th-order constraint.
\begin{lemma}
\label{lm-cons1}
Let $M$ be a monomial in $\R[\P_{m,n}]$ with degree $2m$ and total order $2m$,
where $\P_{m,n}$ is defined in \eqref{eq-pm}.
Then, we can obtain an $m$th-order constraint from $M$.
\end{lemma}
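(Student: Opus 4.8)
The plan is to realize every $m$th-order constraint as the total integral of an exact partial derivative $\partial G/\partial x_{a,t}$, whose value is forced to vanish by the boundary identity of Lemma~\ref{lemma4}. Concretely, I will produce from $M$ a monomial $N$ of degree $2m-1$ and total order $2m-1$ together with an index $a\in[n]$, set
$$G = \frac{N}{p_t^{2m-2}},$$
and define $R = p_t^{2m-1}\,\partial G/\partial x_{a,t}$. Writing $N=p_t^{s}\widetilde N$ with $\widetilde N$ a product of genuine derivatives (each of order $\ge 1$), the function $G$ takes exactly the form $p_t\prod_i [p_t^{(m_i)}]^{k_i}/p_t^{k_i}$ appearing in \eqref{lem4} with $\sum_i k_i=2m-1-s$, so that $\int_{\mathbf{R}^{n-1}}G\big|_{x_{a,t}=-\infty}^{\infty}\,\d x_t^{(a)}=0$. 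By the fundamental theorem of calculus in the variable $x_{a,t}$ this gives $\int_{\mathbf{R}^n}\partial G/\partial x_{a,t}\,\d x_t=0$, i.e. $\int_{\mathbf{R}^n}R/p_t^{2m-1}\,\d x_t=0$, which is precisely the vanishing condition of Definition~\ref{def-41}.

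It then remains to choose $N$ and $a$ so that $R$ is a genuine $2m$th-order differential form containing $M$. Direct differentiation gives
$$R = p_t\,\frac{\partial N}{\partial x_{a,t}} - (2m-2)\,N\,\frac{\partial p_t}{\partial x_{a,t}},$$
and since raising one factor of $N$ by one order keeps the number of factors at $2m-1$ and lifts the total order from $2m-1$ to $2m$, every monomial of $R$ has degree $2m$ and total order $2m$; thus $R$ is a $2m$th-order differential form in $\R[\P_n]$. To obtain $N$ from $M$ I distinguish two cases according to whether $M$ has a factor $p_t$ of order $0$. If it does, I pick a factor $u=p_t^{(c)}$ of $M$ of maximal order $c\ge1$, replace it by an antiderivative $\tilde u$ with $\partial \tilde u/\partial x_{a,t}=u$ (so $\ord(\tilde u)=c-1$), and delete one factor $p_t$; the resulting $N=(M/(p_t\,u))\,\tilde u$ has degree $2m-1$ and total order $2m-1$, and the term $p_t\,(\partial\tilde u/\partial x_{a,t})(N/\tilde u)=M$ reappears in $R$. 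If $M$ has no order-$0$ factor, then its $2m$ factors have total order $2m$, forcing each of them to have order exactly $1$; peeling off one first-order factor $u=\partial p_t/\partial x_{a,t}$ and setting $N=M/u$ makes the second term of $R$ equal to $-(2m-2)M$.

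The main obstacle is bookkeeping rather than analysis: I must verify that $M$ really survives in $R$ as the distinguished (leading) monomial under the term order of Section~2.1, so that the constraints attached to distinct monomials are triangular and hence usable by $\R$-Gaussian elimination in $\span_\R(\M_{2m,n})$. Choosing $u$ to be a factor of maximal order makes the recovered copy of $M$ carry the highest-order derivative among all monomials of $R$ in the first case, which should pin it as the leading term; the second case needs the coefficient $2m-2$ to be nonzero, so the degenerate value $m=1$ (where $D(1,n)$ is already given by \eqref{H2}) must be excluded or treated separately. A secondary point is that re-differentiating $\tilde u$ or a remaining factor never violates the homogeneity of a $2m$th-order differential form; this is automatic, since raising one factor by a single order together with the multiplication by $p_t$ keeps every monomial of $R$ at degree and total order $2m$. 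Once these are confirmed, $R$ is the desired $m$th-order constraint obtained from $M$.
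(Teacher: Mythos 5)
Your proposal is correct and is essentially the paper's own proof: your ``differentiate $G=N/p_t^{2m-2}$ and integrate'' step is exactly the paper's integration by parts with the boundary term killed by Lemma~\ref{lemma4}, and your constraint $R=p_t\,\partial N/\partial x_{a,t}-(2m-2)\,N\,\partial p_t/\partial x_{a,t}$ coincides term by term with the paper's $R$ once one writes $M_1=p_tM_2$, as does the degree/total-order check. The additional bookkeeping you impose (picking a maximal-order factor so that $M$ survives as a leading term, and flagging the degenerate case $m=1$) is a harmless refinement beyond what the lemma itself asks for.
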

\begin{proof}
Let $v\in\P_{m,n}$ be a factor of $M$ such that $\ord(v)>0$.
%
Assume $v =\frac{\partial^o p_t}{\partial^ox_{a,t}}\ (o\geq1)$ without loss of generality.
By the integration by parts, we have
{\small
\begin{equation*}\label{3.I}
\begin{array}{ll}
&\displaystyle{\int_{\mathbf{R}^n} \frac{M}{p_t^{2m-1}}\d x_t}\\[0.4cm]
&=\displaystyle{\int_{-\infty}^{\infty}\ldots\int_{-\infty}^{\infty}\int_{-\infty}^{\infty}
\left(\frac{M_1}{p_t^{2m-1}}\frac{\partial^{o}p_t}{\partial^{o}x_{a,t}}\right)
  dx_{a,t}\d x_t^{(a)}}\\[0.4cm]
&=\displaystyle{\int_{-\infty}^{\infty}\ldots\int_{-\infty}^{\infty}
 \left(\frac{M_1}{p_t^{2m-1}}\frac{\partial^{o-1}p_t}{\partial^{o-1}x_{a,t}}\right)
 \Bigg|_{x_{a,t}=-\infty}^{\infty}\d x_t^{(a)}}
-\displaystyle{\int_{\mathbf{R}^n}
\left[\frac{\partial^{o-1}p_t}{\partial^{o-1}x_{a,t}}
\frac{\partial \left(\frac{M_1}{p_t^{2m-1}}\right)}{\partial x_{a,t}}\right]\d x_t}\\
&
\overset{\eqref{lem4}}{=}
-\displaystyle{\int_{\mathbf{R}^n}
\left[\frac{\partial^{o-1}p_t}{\partial^{o-1}x_{a,t}}
\frac{\partial \left(\frac{M_1}{p_t^{2m-1}}\right)}{\partial x_{a,t}}\right]\d x_t},\\
\end{array}
\end{equation*}}
which leads to
{\small \begin{equation*}
\label{3.I1}
\displaystyle{\int_{\mathbf{R}^n}
\frac{1}{p_t^{2m-1}}
\left[M +
p_t^{2m-1}\frac{\partial^{o-1}p_t}{\partial^{o-1}x_{a,t}}
\frac{\partial \left(\frac{M_1}{p_t^{2m-1}}\right)}{\partial x_{a,t}}\right]\d x_t=
\int_{\mathbf{R}^n}\frac{R}{p_t^{2m-1}}\d x_t=0}
\end{equation*}}
where
$R=M +
\frac{1}{p_t}\frac{\partial^{o-1}p_t}{\partial^{o-1}x_{a,t}}
\left(\frac{\partial M_1}{\partial x_{a,t}}p_t - (2m-1)M_1\frac{\partial p_t}{\partial x_{a,t}}\right)$. 
Then, it suffices to show that $R$ is a $2m$th-order differential form.

Since $M=vM_1$ has degree $2m$ and total order $2m$, $M_1$ has degree $2m-1$ and total order $2m-o$. If $o=1$, then
$R=M +\frac{\partial M_1}{\partial x_{a,t}}p_t - (2m-1)M_1\frac{\partial p_t}{\partial x_{a,t}}$ is
a $2m$th-order differential form.
If $o>1$, then $2m-o<2m-1$ and thus $M_1=p_tM_2$, where $M_2$ has degree $2m-2$ and total order $2m-o$.
Then we have $R=M +
\frac{\partial^{o-1}p_t}{\partial^{o-1}x_{a,t}}
\left[(\frac{\partial p_t}{\partial x_{a,t}}M_2+p_t\frac{\partial M_2}{\partial x_{a,t}}) - (2m-1)M_2\frac{\partial p_t}{\partial x_{a,t}}\right]$ is
a $2m$th-order differential form. The lemma is proved.
\end{proof}

For the second type constraints, we need to define the following operators.
For any differentiable function $\psi$ in $x_{i,t}$, let $\nabla^{(0)}\psi=\psi$ and
\begin{equation}
\label{eq-Del}
\begin{array}{ll}
\nabla^{(k)}\psi=\underset{total\ i}{\underbrace{\nabla^2(\nabla^2(\cdots(\nabla^2}}\psi)\cdots))
\hbox{ if }\ k=2i,\\[0.4cm]
\nabla^{(k)}\psi=\nabla(\nabla^{(2i)}\psi)
\hbox{ if}\ k=2i+1,\\[0.4cm]
%
\prod_{i=1}^{j}\nabla^{(k_i)}\psi=\nabla^{(k_1)}\psi*\nabla^{(k_2)}\psi*\cdots*
\nabla^{(k_j)}\psi.
\end{array}
\end{equation}
In \eqref{eq-Del},  $A*B$ represents the inner product  if $A$ and $B$ are vectors,
and the multiplication if $A$ and/or $B$ are scalars.
Also, $\nabla^{(k)}\psi$ is a scalar if $k$ is even and a vector if $k$ is odd.

\begin{lemma}\cite{Amazigo1980}\label{lemma1}
If $\phi(x)$ and $\psi(x)$ are twice continuously differentiable functions in $\R^n$ and $V$ is any set bounded by a piecewise smooth, closed, oriented surface $S$ in $\R^n$, then
\begin{equation}
\int_{V}\nabla\phi\cdot\nabla\psi \d V =\int_{S}\phi\nabla\psi\cdot \d\mathbf{s}-\int_{V}\phi\nabla^2\psi \d V,
\end{equation}
where $\d\mathbf{s}$ denotes the elementary area vector and $\nabla\psi\cdot \d\mathbf{s}$ is the inner product of these two vectors.
\end{lemma}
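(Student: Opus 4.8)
The plan is to recognize the asserted identity as Green's first identity and to obtain it from the divergence (Gauss) theorem combined with the product rule for divergence. First I would introduce the auxiliary vector field $\mathbf{F}=\phi\,\nabla\psi$ on $\R^n$. Since $\psi$ is twice continuously differentiable, $\nabla\psi$ is continuously differentiable, and since $\phi$ is continuously differentiable, the product $\mathbf{F}=\phi\,\nabla\psi$ is a $C^1$ vector field on the closure of $V$; this is exactly the regularity required to apply the divergence theorem, and the $C^2$ hypotheses in the statement are in fact more than enough.

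Next I would compute the divergence of $\mathbf{F}$ componentwise. Applying the ordinary product rule inside each summand of $\nabla\cdot\mathbf{F}=\sum_{i=1}^n\frac{\partial}{\partial x_i}\left(\phi\,\frac{\partial\psi}{\partial x_i}\right)$ gives
\begin{equation*}
\nabla\cdot(\phi\,\nabla\psi)=\sum_{i=1}^n\left(\frac{\partial\phi}{\partial x_i}\frac{\partial\psi}{\partial x_i}+\phi\,\frac{\partial^2\psi}{\partial x_i^2}\right)=\nabla\phi\cdot\nabla\psi+\phi\,\nabla^2\psi,
\end{equation*}
where $\nabla^2\psi=\sum_{i=1}^n\frac{\partial^2\psi}{\partial x_i^2}$ is the Laplacian used throughout this paper.

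Then I would invoke the divergence theorem on $V$, whose boundary $S$ is piecewise smooth, closed, and oriented by hypothesis, in the form $\int_V\nabla\cdot\mathbf{F}\,\d V=\int_S\mathbf{F}\cdot\d\mathbf{s}$. Substituting the divergence just computed together with $\mathbf{F}=\phi\,\nabla\psi$ yields
\begin{equation*}
\int_V\left(\nabla\phi\cdot\nabla\psi+\phi\,\nabla^2\psi\right)\d V=\int_S\phi\,\nabla\psi\cdot\d\mathbf{s},
\end{equation*}
and transposing $\int_V\phi\,\nabla^2\psi\,\d V$ to the right-hand side produces precisely the claimed equality.

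Once the identity is framed this way the argument is routine, so I do not anticipate a genuine obstacle; the only points deserving care are confirming that $\mathbf{F}$ satisfies the $C^1$ regularity needed for the divergence theorem — guaranteed by the differentiability assumptions on $\phi$ and $\psi$ — and checking that the stated geometric conditions on $S$ are exactly those under which the divergence theorem holds. Because this is a classical result, one may alternatively simply cite \cite{Amazigo1980}, but the derivation sketched above is entirely self-contained.
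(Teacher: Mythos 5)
Your proposal is correct: it is the standard derivation of Green's first identity, obtained by applying the divergence theorem to the auxiliary field $\phi\,\nabla\psi$ and using the product rule $\nabla\cdot(\phi\,\nabla\psi)=\nabla\phi\cdot\nabla\psi+\phi\,\nabla^2\psi$. The paper itself gives no proof of this lemma at all --- it simply cites \cite{Amazigo1980} and uses the identity as a known tool (its role in the paper is only as an ingredient in the proofs of Lemmas \ref{lemma2} and \ref{lm-cons2}) --- so your self-contained argument is a legitimate substitute for the citation, and your attention to the $C^1$ regularity of $\phi\,\nabla\psi$ and to the hypotheses on $S$ matches exactly what the divergence theorem requires.
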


The second type constraints come from the following lemma,
whose proof is in Appendix A.
\begin{lemma}\label{lemma2}
Let $V_r$ be the $n$-dimensional sphere of radius $r$ centered at the origin and having
boundary surface denoted by $S_r$.
If $s,\ m_i,\ k_i\in \N_{>0}$, such that $\sum_{i=1}^{s}k_im_i$ is even and $m_{s+1}$ is odd (i.e., $\prod_{i=1}^s[\nabla^{(m_i)}p_t]^{k_i}$ is a scalar and $\nabla^{(m_{s+1})}p_t$ is a vector). Then
\begin{equation}
\label{eq-lem2}
\begin{array}{ll}
\displaystyle{\lim\limits_{r\to\infty}\int_{S_r}\prod\limits_{i=1}^{s}\frac{[\nabla^{(m_i)}p_t]^{k_i}}{p_t^{k_i}}\nabla^{(m_{s+1})} p_t\cdot {\rm d}\mathbf{s_r}}=0.
\end{array}
\end{equation}
\end{lemma}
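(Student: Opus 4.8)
The plan is to reduce the vanishing of the surface integral to pointwise decay estimates on $p_t$ and its derivatives, and then to weigh that decay against the polynomial growth $O(r^{n-1})$ of the area of $S_r$. The parity hypotheses ($\sum_{i=1}^s k_i m_i$ even and $m_{s+1}$ odd) play only a bookkeeping role: as recorded in the hypothesis, they force the vector-valued factors among the $\nabla^{(m_i)}p_t$ to occur an even number of times, so that $\prod_{i=1}^s[\nabla^{(m_i)}p_t]^{k_i}$ is a genuine scalar while $\nabla^{(m_{s+1})}p_t$ is a vector; thus the integrand is a scalar prefactor times the normal component $\nabla^{(m_{s+1})}p_t\cdot \d\mathbf{s}_r$, and the whole surface integral is well defined. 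The substantive content is an estimate, so I would first note that the integrand is bounded in absolute value by $\left|\prod_{i=1}^s \frac{[\nabla^{(m_i)}p_t]^{k_i}}{p_t^{k_i}}\right|\,\bigl|\nabla^{(m_{s+1})}p_t\bigr|$.

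First I would exploit the Gaussian-convolution representation \eqref{1.7}: writing $p_t = p \ast G_t$ as the convolution of $p$ with the Gaussian heat kernel $G_t$, every $\nabla^{(m)}p_t$ is the convolution of $p$ with an order-$m$ derivative of $G_t$ (any operator built from $\nabla$ and $\nabla^2$ being a finite combination of partial derivatives of total order $m$), and differentiating the Gaussian only produces polynomial, Hermite-type prefactors. Under the standing decay assumption on the initial density (it suffices, for instance, that $p$ be supported in a ball $\|x\|\le R$, or decay fast enough to have finite moments), this yields for fixed $t>0$ and all large $\|x_t\|$ an upper bound $|\nabla^{(m)}p_t(x_t)|\le C_m(1+\|x_t\|)^m e^{-(\|x_t\|-R)^2/(2t)}$, together with a matching Gaussian lower bound $p_t(x_t)\ge c\, e^{-(\|x_t\|+R)^2/(2t)}$.

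Next I would combine these to control each ratio: dividing the upper bound on $|\nabla^{(m_i)}p_t|$ by the lower bound on $p_t$ gives $\frac{|\nabla^{(m_i)}p_t|}{p_t}\le C(1+\|x_t\|)^{m_i}e^{2R\|x_t\|/t}$, so the scalar prefactor grows at most like a polynomial times $e^{c\|x_t\|}$, whereas the trailing factor $\nabla^{(m_{s+1})}p_t$ keeps the full Gaussian decay $e^{-(\|x_t\|-R)^2/(2t)}$. Hence on $S_r$ the integrand is bounded by $C(1+r)^{D}e^{c r-(r-R)^2/(2t)}$, where $D=\sum_i k_i m_i+m_{s+1}$ and $c=2R\sum_i k_i/t$, and this exponent tends to $-\infty$ like $-r^2/(2t)$.

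Finally, since the surface measure of $S_r$ is $O(r^{n-1})$, the whole integral is bounded by $C\,r^{n-1}(1+r)^{D}e^{cr-(r-R)^2/(2t)}$, which tends to $0$ as $r\to\infty$ because the Gaussian factor dominates every polynomial and the lone exponential $e^{cr}$. The one place real care is needed is the denominator: one must secure a genuine Gaussian lower bound on $p_t$ so that the ratios $\nabla^{(m_i)}p_t/p_t$ cannot grow faster than $e^{c\|x_t\|}$. Granting this, the Gaussian decay of the remaining factor $\nabla^{(m_{s+1})}p_t$ overwhelms both the polynomial surface-area growth and that single exponential, forcing the limit to vanish.
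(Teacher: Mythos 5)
There is a genuine gap: your pointwise-decay mechanism needs hypotheses on the initial density $p$ that the lemma does not make. The upper bound $|\nabla^{(m)}p_t(x_t)|\le C_m(1+\|x_t\|)^m e^{-(\|x_t\|-R)^2/(2t)}$ holds essentially only when $p$ is compactly supported (or has sub-Gaussian tails); it is false under your alternative hypothesis ``decay fast enough to have finite moments''. For instance, $p(x)\propto e^{-\|x\|}$ has finite moments of every order, yet $p_t=p\ast G_t$ and all of its derivatives have exponential tails of order $e^{-\|x_t\|}$ rather than Gaussian ones, so the factor you rely on to overwhelm the surface-area growth and the exponential $e^{cr}$ coming from the denominator is simply not there; for heavier (say Cauchy-type) tails the estimates fail even more drastically. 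Your argument therefore proves the lemma only for a restricted class of initial densities, whereas the statement is needed for the general $X$ appearing in Costa's EPI. Incidentally, the half of the estimate you flag as delicate --- the Gaussian lower bound on $p_t$ --- is the unproblematic half: for an arbitrary density, restricting the convolution \eqref{1.7} to a ball of radius $R$ carrying at least half the mass of $p$ gives $p_t(x_t)\ge \tfrac12 (2\pi t)^{-n/2}e^{-(\|x_t\|+R)^2/(2t)}$. It is the Gaussian \emph{upper} bound on the derivatives that cannot be had in general.

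The paper's proof avoids pointwise decay altogether. It writes each ratio $\nabla^{(m)}p_t/p_t$ as a sum of conditional expectations of polynomials in the Gaussian noise $Z_t$ given $X_t=x_t$, controls these via Gaussian moments together with Jensen's and Cauchy--Schwarz inequalities, and then dominates the full radial integral $\int_0^\infty\int_{S_r}(\cdots)\,\mathrm{d}r$ by the finite expectation
$\mathbb{E}\bigl(\prod_{i=1}^{s}\bigl|\nabla^{(m_i)}p_t/p_t\bigr|^{k_i}\,\|\nabla^{(m_{s+1})}p_t\|/p_t\bigr)$
as in \eqref{B3}; the convergence of this integral in $r$ is what forces the surface integrals to vanish as $r\to\infty$, by the same Fatou-type step used in the proof of Lemma \ref{lemma4}. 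That integrability argument is insensitive to the tail behavior of $p$. To salvage your approach you would have to either add a compact-support (or sub-Gaussian) assumption to the lemma, or replace your pointwise bounds by an expectation/integrability argument of this kind.
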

The following lemma shows how to compute an $m$th-order constraint.
\begin{lemma}
\label{lm-cons2}
If $Q=\prod_{i=1}^{2m}\nabla^{(k_i)}p_t$, with  $\sum_{i=1}^{2m}k_i=2m$ and $k_i\in [2m-1]_0$,
then we can obtain an $m$th-order   constraint from $Q$.
\end{lemma}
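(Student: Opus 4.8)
The plan is to mimic the integration-by-parts argument used for the first type of constraint in Lemma~\ref{lm-cons1}, but now to transfer one derivative across a \emph{gradient} or a \emph{Laplacian} factor of $Q$ and to discard the resulting surface integral by Lemma~\ref{lemma2}. Since $\sum_{i=1}^{2m}k_i=2m>0$, at least one factor $\nabla^{(k_j)}p_t$ has $k_j\ge 1$; I would pick the index $j$ with $k_j$ maximal and split it off, writing $Q=\nabla^{(k_j)}p_t * Q_1$, where $Q_1=\prod_{i\neq j}\nabla^{(k_i)}p_t$ collects the remaining $2m-1$ factors. I would then integrate $\int_{V_r}\frac{Q}{p_t^{2m-1}}\,\d V$ over the ball $V_r$ and integrate by parts to move one derivative off $\nabla^{(k_j)}p_t$ onto the weighted factor $Q_1/p_t^{2m-1}$.

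Concretely, I would distinguish two cases by the parity of $k_j$. If $k_j$ is even (so $\nabla^{(k_j)}p_t=\nabla^2\psi$ is a scalar with $\psi=\nabla^{(k_j-2)}p_t$), then $Q_1/p_t^{2m-1}$ is a scalar $\phi$ and Lemma~\ref{lemma1} applies directly, giving $\int_{V_r}\phi\nabla^2\psi\,\d V=\int_{S_r}\phi\nabla\psi\cdot\d\mathbf{s}-\int_{V_r}\nabla\phi\cdot\nabla\psi\,\d V$. If $k_j$ is odd (so $\nabla^{(k_j)}p_t=\nabla\chi$ is a vector with $\chi=\nabla^{(k_j-1)}p_t$), then $Q_1/p_t^{2m-1}$ is a vector $\mathbf{A}$ and I would use the vector form $\int_{V_r}\mathbf{A}\cdot\nabla\chi\,\d V=\int_{S_r}\chi\,\mathbf{A}\cdot\d\mathbf{s}-\int_{V_r}\chi\,(\nabla\cdot\mathbf{A})\,\d V$, which follows from the divergence theorem applied to $\chi\mathbf{A}$. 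In both cases the boundary integrand is a scalar of the form $\prod\frac{[\nabla^{(m_i)}p_t]^{k_i}}{p_t^{k_i}}$ times an odd-order vector ($\nabla^{(k_j-1)}p_t$ or $\nabla^{(k_j)}p_t$), so $\lim_{r\to\infty}$ of the surface term vanishes by Lemma~\ref{lemma2}. Passing to the limit leaves an identity of the form $\int_{\R^n}\frac{Q}{p_t^{2m-1}}\,\d x_t=-\int_{\R^n}\frac{1}{p_t^{2m-1}}(\cdots)\,\d x_t$, which I would rearrange into $\int_{\R^n}\frac{R}{p_t^{2m-1}}\,\d x_t=0$ with $R=Q+(\text{the interior remainder})$.

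The final and most delicate step is to verify that $R$ is genuinely a $2m$th-order differential form in $\R[\P_n]$, i.e. that every monomial has degree $2m$ and total order $2m$, and in particular that $R$ is a polynomial with no surviving $p_t$ in a denominator. The interior remainder comes from differentiating $Q_1/p_t^{2m-1}$; the product and quotient rules produce a term $\chi\,(\nabla\cdot Q_1)$ (respectively $\nabla\phi\cdot\nabla\psi$) together with a weight term proportional to $\frac{1}{p_t}\chi\,(Q_1 * \nabla p_t)$. Transferring one derivative preserves the total order, and the product rule preserves the degree, so both counts come out to $2m$ automatically; the only genuine obstacle is the spurious $1/p_t$ arising from $\d(p_t^{-(2m-1)})$. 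I would resolve it exactly as in Lemma~\ref{lm-cons1}: if all $k_i=1$ then the chosen factor gives $\chi=\nabla^{(0)}p_t=p_t$, cancelling the $1/p_t$; otherwise, since $\sum_i k_i=2m$ equals the number of factors and not all $k_i=1$, some index has $k_i=0$ and the maximal index has $k_j\ge 2$, so $Q_1$ carries a factor $p_t$ that supplies the cancellation. In every case $R\in\R[\P_n]$ is a $2m$th-order differential form with $\int_{\R^n}\frac{R}{p_t^{2m-1}}\,\d x_t=0$, hence an $m$th-order constraint in the sense of Definition~\ref{def-41}. The bookkeeping of these degree and total-order invariants, together with the parity case split and the $1/p_t$ cancellation, is where the work lies; the analytic content is entirely supplied by Lemmas~\ref{lemma1} and~\ref{lemma2}.
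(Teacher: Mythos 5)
Your proof is correct and follows essentially the same route as the paper's: split off a highest-order factor $w=\nabla^{(o)}p_t$ of $Q$, integrate by parts via Lemma~\ref{lemma1} (equivalently, the divergence theorem), kill the surface term with Lemma~\ref{lemma2}, and check that the remainder $R=Q+p_t^{2m-1}\,\nabla^{(o-1)}p_t*\nabla\bigl(Q_1/p_t^{2m-1}\bigr)$ is a $2m$th-order differential form. If anything, you are more careful than the paper, which treats only the odd-order case ``without loss of generality'' and defers the polynomiality check to the argument of Lemma~\ref{lm-cons1}, whereas you handle both parities explicitly and give the pigeonhole cancellation of the spurious $1/p_t$ (some $k_i=0$ unless all $k_i=1$) in full.
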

\begin{proof}
Since $\sum_{i=1}^{2m}k_i=2m$,  $Q$ is scalar.
Actually, $Q$ is a $2m$th-order differential form,
since $\nabla^{(k_i)}p_t$ or its components is of total order $k_i$ and degree 1 by  \eqref{eq-Del}.
Let $w$ be a factor of $Q$ with the largest order and $Q = Q_1*w$.
Without loss of generality, assume  $w =\nabla^{(o)}p_t$ and $o$ is odd. Then we know that $w$ and $Q_1$ are vectors. By Lemma \ref{lemma1}, we have
\begin{equation*}\label{3.II}
\begin{array}{ll}
\displaystyle{\int_{\mathbb{R}^n}\frac{Q}{p_t^{2m-1}}\d x_t}\\[0.3cm]
=\displaystyle{\lim\limits_{r\rightarrow\infty}\int_{S_r} \nabla^{(o-1)}p_t*\frac{Q_1}{p_t^{2m-1}}\cdot dS_r}
-\displaystyle{\int_{\mathbb{R}^n}\nabla^{(o-1)}p_t*\nabla\left(\frac{Q_1}{p_t^{2m-1}}\right)\d x_t}\\[0.3cm]
\overset{\eqref{eq-lem2}}{=}
-\displaystyle{\int_{\mathbb{R}^n}\nabla^{(o-1)}p_t*\nabla\left(\frac{Q_1}{p_t^{2m-1}}\right)\d x_t}.
\end{array}
\end{equation*}
We thus have
\begin{equation*}\label{3.II1}
\begin{array}{ll}
\displaystyle{\int_{\mathbb{R}^n}\frac{1}{p_t^{2m-1}}\left[Q
+p_t^{2m-1}\nabla^{(o-1)}p_t*\nabla\left(\frac{Q_1}{p_t^{2m-1}}\right)
\right]\d x_t}=0,
\end{array}
\end{equation*}
and obtain a constraint
$R=Q+p_t^{2m-1}\nabla^{(o-1)}p_t*\nabla\left(\frac{Q_1}{p_t^{2m-1}}\right)$.
Similar to the proof of Lemma \ref{lm-cons1}, we can verify that $R$ is a $2m$-th order differential form.
\end{proof}

From lower order constraints, we can obtain higher order constraints.
\begin{lemma}
\label{lm-cons3}
From an $m$th-order constraint $R$,
we can obtain an $(m+1)$th-order constraint.
\end{lemma}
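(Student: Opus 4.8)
The plan is to produce the new constraint by differentiating the defining integral identity of $R$ with respect to the time parameter $t$. By Definition~\ref{def-41}, the $m$th-order constraint $R$ is a $2m$th-order differential form satisfying $\int_{\R^n} R/p_t^{2m-1}\,\d x_t = 0$ for every $t>0$. Since the left-hand side is identically zero as a function of $t$, its $t$-derivative vanishes; assuming we may differentiate under the integral sign, this yields $\int_{\R^n}\frac{\d}{\d t}\bigl(R/p_t^{2m-1}\bigr)\,\d x_t=0$.

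The key computation is then just the quotient rule. Applying $\frac{\d}{\d t}$ to $R/p_t^{2m-1}$ and clearing denominators to $p_t^{2m+1}=p_t^{2(m+1)-1}$ gives
$$
\frac{\d}{\d t}\Bigl(\frac{R}{p_t^{2m-1}}\Bigr)=\frac{R'}{p_t^{2m+1}},\qquad R'=p_t^2\frac{\d R}{\d t}-(2m-1)\,p_t\frac{\d p_t}{\d t}\,R.
$$
This is exactly the recursion that appeared in the proof of Lemma~\ref{lm-pr2} (with the index shifted by one), so $R'$ is the natural candidate for the $(m+1)$th-order constraint, and the displayed identity immediately gives $\int_{\R^n} R'/p_t^{2m+1}\,\d x_t=0$.

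It remains to check that $R'$ is a $2(m+1)$th-order differential form in $\R[\P_{m+1,n}]$, and here I would argue exactly as in Lemma~\ref{lm-pr2}. By \eqref{eq-pm3}, applying $\d/\d t$ to any derivative $p_t^{(h)}$ uses the heat equation \eqref{H1} and raises its total order by $2$ while preserving its degree $1$. Hence, term by term, $\frac{\d R}{\d t}$ has degree $2m$ and total order $2m+2$; multiplying by $p_t^2$ (degree $2$, total order $0$) makes $p_t^2\frac{\d R}{\d t}$ a differential form of degree and total order $2m+2$. The second summand $(2m-1)p_t\frac{\d p_t}{\d t}R$ likewise has degree $1+1+2m=2m+2$ and total order $0+2+2m=2m+2$, and every derivative occurring has order at most $2m+1=2(m+1)-1$, so $R'\in\R[\P_{m+1,n}]$. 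Thus $R'$ is a $2(m+1)$th-order differential form whose associated integral vanishes, i.e.\ the required $(m+1)$th-order constraint.

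The main obstacle is analytic rather than algebraic: justifying the interchange of $\d/\d t$ with the integral $\int_{\R^n}(\cdot)\,\d x_t$. This demands a dominated-convergence-type bound on $\frac{\d}{\d t}\bigl(R/p_t^{2m-1}\bigr)$ that is uniform for $t$ in a neighborhood of the point in question, which in turn relies on the decay of $p_t$ and its derivatives at infinity---the same standing regularity assumptions that underlie \eqref{H2}, Lemma~\ref{lm-pr2}, and Costa's original heat-flow computations. Granting these, the algebraic verification above is routine and simply mirrors the induction already carried out for $F_{m,n}$.
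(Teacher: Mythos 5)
Your proof is correct and follows essentially the same route as the paper: differentiate the vanishing integral $\int_{\R^n} R/p_t^{2m-1}\,\d x_t=0$ in $t$, apply the quotient rule together with the heat equation \eqref{H1} to obtain $R_1=p_t^2\frac{\d R}{\d t}-(2m-1)Rp_t\frac{\d p_t}{\d t}$ over the denominator $p_t^{2m+1}$, and verify by the order/degree count of \eqref{eq-pm3} that $R_1$ is a $2(m+1)$th-order differential form. In fact your order bookkeeping is slightly more careful than the paper's (whose proof contains a typo stating ``$2(m+2)$th-order''), and your remark on justifying differentiation under the integral sign makes explicit an analytic step the paper leaves implicit.
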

\begin{proof}
Since $R$ is an $m$th-order constraint, we have
\begin{equation*}
\displaystyle{
\frac{\d}{\d t} \int_{\mathbb{R}^n} \frac{R}{p_t^{2m-1}}\d x_t
=\int_{\mathbb{R}^n} \frac{\d}{\d t} \frac{R}{p_t^{2m-1}}\d x_t
=\int_{\mathbb{R}^n} \frac{R_1}{p_t^{2m+1}}\d x_t=0,
}\end{equation*}
where $R_1=p_t^2\frac{\d R}{\d t}-(2m-1) Rp_t\frac{\d p_t}{\d t}$.
By \eqref{eq-pm3}, $R_1$ is a $2(m+2)$th-order differential form and the lemma is proved.
%
\end{proof}

\subsection{The proof procedure}
In this section, we give the procedure to prove $D(m,n)$ and Costa's EPI,
which is called a procedure instead of an algorithm because
there is no guarantee of success.

\begin{proc}
\label{proc-H}
{\rm Input}: $E_{m,n}\in\R[\P_{m,n}]$ is a $2m$th-order differential form.

{\rm Output}: A proof for $\int_{\mathbf{R}^n}\ \frac{E_{m,n}}{p_t^{2m-1}}\d x_t\ge0$
for specific values of $m$ and $n$, or fail.
\end{proc}
{\bf Step 1}.
Compute the $m$th-order constraints: $\C_{m,n}=\{R_{i}, i=1,\ldots,N_1\}$
by using Lemmas \ref{lm-cons1}, \ref{lm-cons2},
and by applying Lemma \ref{lm-cons3} to $\C_{m-1,n}$.
Note that any element in the vector space $\span_\R(\C_{m,n})$ is also an $m$th-order constraint.

{\bf Step 2}.
Treat the monomials in $\M_{m,n}$ as new variables $m_i,i=1,\ldots,N_{m,n}$,
which are all the monomials in $\R[\P_{m,n}]$ with degree $m$ and total order $m$.
We call $m_im_j$ a {\em quadratic monomial}.

{\bf Step 3}.
Write each monomial in $\C_{m,n}$ as  a quadratic monomial if  possible.
Doing $\R$-Gaussian elimination to $\C_{m,n}$ by treating the monomials
as variables and according to
a monomial order such that a quadratic monomial is less than a non-quadratic monomial, we obtain
$$\widetilde{\C}_{m,n}={ \C}_{m,n,1}\cup { \C}_{m,n,2},$$
where ${ {\C}}_{m,n,1}=\{\widehat{R}_{i}, i=1,\ldots,\widehat{N}_{1}\}$ is the set of quadratic forms in $m_i$,
${ {\C}}_{m,n,2}=\{\widetilde{R}_{i}, i=1,\ldots,\widetilde{N}_{1}\}$ is the set of   non-quadratic forms, and $\span_\R(\C_{m,n})=\span_\R(\widetilde{\C}_{m,n})$.

{\bf Step 4}.
There may exist relations among the variables $m_i$, which are called {\em intrinsic
constraints}.
For instance, for $m_1=p_t^2 (\frac{\partial^2 p_t}{\partial^2 x_{1,t}})^2$,
$m_2=p_t (\frac{\partial p_t}{\partial x_{1,t}})^2 \frac{\partial^2 p_t}{\partial^2 x_{1,t}}$,
and $m_3=(\frac{\partial p_t}{\partial x_{1,t}})^4$ in $\M_{4,n}$,
an intrinsic constraint is $m_1m_3-m_2^2=0$.
Add those intrinsic constraints which are quadratic forms in $m_i$ to
${\C}_{m,n,1}$ to obtain
$$\widehat{\C}_{m,n,1}=\{\widehat{R}_i,i=1,\ldots,N_2\}.$$
Note that each  $\widehat{R}_i$  is an $m$th-order constraint.

{\bf Step 5}.
Let  $\widehat{E}_{m,n}$ be obtained from $E_{m,n}$
by eliminating the non-quadratic monomials using ${\C}_{m,n,2}$ such that
$E_{m,n}-\widehat{E}_{m,n}\in\span_\R(\C_{m,n,2})\subset \span_\R(\C_{m,n})$.
If $\widehat{E}_{m,n}$ is not a quadratic form in $m_i$, return fail.

{\bf Step 6}. Since  $\widehat{E}_{m,n}$ and $\widehat{R}_i,i=1,\ldots,N_2$
are quadratic forms in $m_i$,
we can use the Matlab program given in Appendix B to compute $p_i\in\R$ such that
\begin{equation}
\label{eq-tt3}
\widehat{E}_{m,n}-\sum_{i=1}^{N_2} p_i \widehat{R}_i =S
\end{equation}
where $S=\sum_{i=1}^{N_{m,n}} c_i (\sum_{j=i}^{N_{m,n}} e_{ij} m_j)^2$ is an SOS,
$c_i,e_{ij}\in\R$ and $c_i\ge0$.
If \eqref{eq-tt3} cannot be found, return fail.

{\bf Step 7}.
If \eqref{eq-tt3} is found, we have a proof for $\int_{\mathbf{R}^n}\ \frac{E_{m,n}}{p_t^{2m-1}}\d x_t\ge0$.
%
Since $E_{m,n}-\widehat{E}_{m,n}\in\span_\R(\C_{m,n})$,
each $\widehat{R}_i$ is an $m$th-order constraint,
$p_t\ge0$, and $S\ge0$,   we have the following proof:
\begin{equation}
\label{eq-t1}
\int \frac{{E}_{m,n}}{p_t^{2m-1}}\d x_t
=\int \frac{\widehat{E}_{m,n}}{p_t^{2m-1}}\d x_t
\overset{\eqref{eq-tt3}}{=}
\int \frac{\sum_{i=1}^{N_2} q_i \widehat{R}_i + S}{p_t^{2m-1}}\d x_t
=\int \frac{S}{p_t^{2m-1}}\d x_t\ge0.
\end{equation}

%

\begin{remark}
After Step 1, we can use the Matlab program in Appendix B to find  $p_i\in\R$ such that
\begin{equation}
\label{eq-tt5}
{E}_{m,n}-\sum_{i=1}^{N_1} p_i {R}_i =S
\end{equation}
where $S$ is an SOS in polynomials of degree $m$.
But doing so, we need to solve a much larger problem, because we need to treat all monomials of degree $m$ as new variables,
and in Procedure \ref{proc-H}, we need only consider monomials with degree $m$ and total order $m$ as   variables.
On the other hand, if Steps 5 or 6 fails, we can try to compute \eqref{eq-tt5}.
Theoretically, \eqref{eq-tt5} is more general than \eqref{eq-tt3}.
Practically, we do not have one case yet such that \eqref{eq-tt3} fails
and  \eqref{eq-tt5} succeeds.
\end{remark}

\subsection{An illustrative example}
\label{sec-p4}
We use $D(3,1): \frac{\d^3}{\d^3 t}H(X_t)\ge0$ first proved in~\cite{Cheng2015} as an illustrative example
for Procedure \ref{proc-H}.
Since $n=1$, denote
$x_t = x_{1,t}, f:=f_0:=p_t,\ \ f_n:=\frac{\partial^n p_t}{\partial^n x_{1,t}},\, n\in\N_{>0}$ for simplicity.
We have
\begin{equation*}
\frac{\d^3H(X_t)}{\d^3 t}
 \overset{\eqref{H2}}{=}\frac{1}{2}
 \frac{\d^2}{\d^2 t}\displaystyle{\int\frac{f_1^2}{f}\d x_t}
=\frac{1}{2}\displaystyle{\int\frac{\d^2}{\d^2 t}\left(\frac{f_1^2}{f}\right)\d x_t}
\overset{\eqref{H1}}{=}\displaystyle{\int\frac{F_{3,1}}{f^5} \d x_t}
\end{equation*}
where
$F_{3,1}=\frac{1}{4}f^4f_{3}^2-\frac{1}{2}f^3f_{1}f_{3}f_{2}+\frac{1}{4}f^4f_{1}f_{5}+\frac{1}{4}f^2f_{1}^2f_{2}^2-\frac{1}{8}f^3f_{1}^2f_{4}$
is a $6$th-order differential form and we can use  Procedure \ref{proc-H} to prove $D(3,1)$ with $F_{3,1}$ as the input.

In {Step} 1, we find 6 third order constraints: $\C_{3,1}=\{R_{i}, i=1,\ldots,6\}$ using Lemma
\ref{lm-cons1} from 6 monomials in $f_0,\ldots,f_5$ with degree 6 and total order 6. For instance,
from  monomial $ff_2f_1^4$, by the integration by parts, we have
{\small
\begin{equation*}
\displaystyle{\int\frac{ff_2f_1^4}{f^5}\d x_t=\left.\frac{f_1^5}{f^4}\right|^{\infty}_{x_t=-\infty}-\int f_1\frac{\partial}{\partial x_t}\left(\frac{f_1^4}{f^4}\right)\d x_t
\overset{\eqref{lem4}}{=}-\int \left(\frac{4ff_1^4f_2-4f_1^6}{f^5}\right)\d x_t}
\end{equation*}}
and obtain a constraint $R=5ff_1^4f_2-4f_1^6$. Here are the 6 constraints:

{\small
\begin{equation*}
\begin{array}{ll}
{R}_{1} = 5ff_{1}^4f_{2}-4f_{1}^6,
&{R}_{2} = 2f^3f_{1}f_{2}f_{3}+f^3f_{2}^3-2f^2f_{1}^2f_{2}^2,\\
{R}_{3} = f^4f_{1}f_{5}+f^4f_{2}f_{4}-f^3f_{1}^2f_{4},
&{R}_{4} = f^3f_{1}^2f_{4}+2f^3f_{1}f_{2}f_{3}-2f^2f_{1}^3f_{3},\\
{R}_{5} = f^2f_{1}^3f_{3}+3f^2f_{1}^2f_{2}^2-3ff_{1}^4f_{2},
&{R}_{6} = f^4f_{2}f_{4}+f^4f_{3}^2-f^3f_{1}f_{2}f_{3}.
\end{array}\end{equation*}
Since $m=3$ and $n=1$, we do not need Lemmas \ref{lm-cons2} and \ref{lm-cons3}.

In {Step} 2,  introduce new variables $\M_{3,1}$, which are all the monomials in $f_i$ with degree 3 and total order 3 and are listed from high to low in the lexicographical monomial order:
$
m_{1} = f^2f_{3}, m_{2} = ff_{1}f_{2}, m_{3} = f_{1}^3.
$

In {Step} 3,
writing each monomial in $\C_{3,1}$ as   a quadratic monomial in $m_i$ if  possible and
doing $\R$-Gaussian elimination to $\C_{3,1}$, we have
\begin{equation*}
\begin{array}{lll}
{{\C}}_{3,1,1}=&\{
 \widehat{R}_1=5m_{2}m_{3}-4m_{3}^2,\quad
 &\widehat{R}_2=m_{1}m_{3}+3m_{2}^2-\frac{12}{5}m_{3}^2\},\\
{{\C}}_{3,1,2}=&\{
\widetilde{R}_1=f^3f_2^3+2m_1m_2-2m_2^2,
&\widetilde{R}_2=f^4f_1f_5-m_1^2+3m_1m_2+6m_2^2-\frac{24}{5}m_3^2,\\
&\hskip5pt \widetilde{R}_3=f^4f_2f_4+m_1^2-m_1m_2,
\quad
&\widetilde{R}_4=f^3f_1^2f_4+2m_1m_2+6m_2^2-\frac{24}{5}m_3^2\}.
\end{array}\end{equation*}

In {Step} 4, there exist no intrinsic constraints
and thus ${\widehat{\C}}_{3,1,1}=\{\widehat{R}_1,\widehat{R}_2\}$ and $N_2=2$.

In {Step} 5, trying to write monomials of $F_{3,1}$ as quadratic forms in $m_i$, we have
${F}_{3,1}=\frac{1}{4}m_{1}^2-\frac{1}{2}m_{1}m_{2}+\frac{1}{4}f^4f_{1}f_{5}+\frac{1}{4}m_{2}^2-\frac{1}{8}f^3f_{1}^2f_{4}$.
Eliminating the non-quadratic monomials from $\widetilde{F}_{3,1}$ using $\C_{3,1,2}$, we have
\small $$\begin{array}{ll}
\widehat{F}_{3,1}=
{F}_{3,1} - (\frac{1}{4}\widetilde{R}_2-\frac{1}{8}\widetilde{R}_4) = \frac{1}{2}m_{1}^2-m_{1}m_{2}-\frac{1}{2}m_{2}^2+\frac{3}{5}m_{3}^2.
\end{array}$$

In {Step 6}, since  $\widehat{F}_{3,1}$, $\widehat{R}_1$, and $\widehat{R}_2$
are quadratic forms in $m_i$,
we can use the Matlab program in Appendix B to find the following SOS representation
\begin{equation}
\label{eq-cc3}
\begin{array}{ll}
\widehat{F}_{3,1}=\sum_{i=1}^{2}p_i\widehat{R}_i+ \sum_{i=1}^3 c_i (\sum_{j=i}^3 e_{i,j} m_j)^2
\end{array}
\end{equation}
where $p_1=\frac{213}{1444},\ p_2=-\frac{407}{1009}$,
$c_1=\frac{1}{2036162}, c_2 =\frac{1}{1784109085952}, c_3=\frac{69485907702371}{9000830338627840}$,
$e_{1,1}=1009, e_{1,2}=-1009, e_{1,3}=407$,
$e_{2,2}=612256,e_{2,3}=-486877$,
$e_{3,3}=1$.

In {Step} 7, from the above SOS representation \eqref{eq-cc3}, we give a proof \eqref{eq-t1} for $D(3,1)$.
%
%
Note that equation (\ref{eq-cc3}) is different from that given in~\cite{Cheng2015}.

\section{Costa's EPI}
\label{sec-epi}
In this section, we give a new proof for Costa's EPI using Procedure \ref{proc-H}.

\subsection{Compute the second order constraints}
\label{sec-epi1}
To prove Costa's EPI, we need the following second order constraints:
\begin{equation}
\label{eq-2cons}
\C_{2,n}=\{R_{i,a,b}^{(2)},R_{j}^{(0)}  \,:\,i=1,\ldots,17; j=1,2; a,b\in[n]\},
\end{equation}
where $R_{j,a,b}^{(2)}$ and $R_{i}^{(0)}$  will be given in Lemmas \ref{lem-42} and \ref{lem-41} below.

Due to the summation structure of $\frac{\d^2 }{\d^2} H(X_t)$ in \eqref{eq-tt1},
we introduce the following notations
\begin{equation}
\label{eq-v2}
{\mathcal V}_{a,b} =\{ \frac{\partial^h p_t}{\partial^{h_1} x_{a,t} \partial^{h_2} x_{b,t}}:
h=h_1+h_2\in[3]_0\}
\end{equation}
where $a,b$ are variables taking values in $[n]$.
Then $\P_{2,n}=\cup_{a=1}^n\cup_{b=1}^n \V_{a,b}$.
%
\begin{lemma}\label{lem-42}
Based on Lemma \ref{lm-cons1}, we obtain 17 second order constraints $R^{(2)}_{i,a,b}\in\R[\V_{a,b}],\ i=1,\ldots,17$, which can be found in Appendix C.
\end{lemma}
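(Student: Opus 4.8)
The plan is to specialize the construction of Lemma~\ref{lm-cons1} to $m=2$, restricted to the variable set $\V_{a,b}$ of~\eqref{eq-v2}. By Definition~\ref{def-41}, a second-order constraint living in $\R[\V_{a,b}]$ is a $4$th-order differential form there, so the task splits cleanly into two parts: enumerate every monomial $M\in\R[\V_{a,b}]$ of degree $4$ and total order $4$, and then run each such $M$ through the integration-by-parts construction of Lemma~\ref{lm-cons1}.

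First I would carry out the enumeration. Each factor of $\V_{a,b}$ has order in $[3]_0=\{0,1,2,3\}$, so the four factors of $M$ must have orders forming a partition of $4$ into four parts each at most $3$, namely $(3,1,0,0)$, $(2,2,0,0)$, $(2,1,1,0)$, or $(1,1,1,1)$. Counting the admissible derivatives of each order (for distinct $a,b$ there is $1$ of order $0$, namely $p_t$; $2$ of order $1$; $3$ of order $2$; $4$ of order $3$) and matching them to the four partitions gives $8+6+9+5=28$ such monomials.

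Next, to each $M$ I would apply Lemma~\ref{lm-cons1}, taking $v$ to be a factor of \emph{highest} order and integrating by parts along a direction $x_{a,t}$ or $x_{b,t}$ in which $v$ is differentiated. Choosing the highest-order factor keeps the output inside $\R[\V_{a,b}]$: the construction differentiates $M_1=M/v$ once more, and integrating a lower factor could push a surviving order-$3$ factor up to order $4$, leaving $\V_{a,b}$, whereas after the top factor is removed no surviving factor can exceed order $3$. By Lemma~\ref{lm-cons1} each resulting $R$ is then a $4$th-order differential form in $\R[\V_{a,b}]$ with $\int_{\mathbf{R}^n} R/p_t^{3}\,\d x_t=0$ (the boundary term killed by Lemma~\ref{lemma4}), hence a legitimate second-order constraint; collecting the distinct ones produces the list $R^{(2)}_{i,a,b}$, $i=1,\ldots,17$, of Appendix~C.

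The main obstacle is that the monomial count $28$ and the constraint count $17$ do not agree, because the construction applied to different monomials repeatedly returns the \emph{same} constraint. This already happens in the $D(3,1)$ illustration: integrating the top factor of $f^4f_2f_4$ and of $f^4f_3^2$ both yield $R_6$, and three further such coincidences collapse its ten monomials to the six listed constraints. So the real work is the finite but delicate bookkeeping of tracking these coincidences among the $28$ monomials---verifying that exactly $17$ distinct forms survive and that each indeed lies in $\R[\V_{a,b}]$---which is precisely what the explicit tabulation in Appendix~C records.
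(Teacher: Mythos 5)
Your construction mechanism is the same as the paper's: specialize Lemma~\ref{lm-cons1} to $m=2$ on $\R[\V_{a,b}]$, enumerate the degree-$4$, total-order-$4$ monomials, and integrate a factor by parts, with the boundary term killed by Lemma~\ref{lemma4}. Your enumeration ($8+6+9+5=28$ over the order patterns) is correct arithmetic, and the discrepancy with the paper's stated ``60'' is not the real issue (that figure evidently counts constraints obtainable from the various factor/direction choices rather than monomials). The genuine gap is your explanation of the number $17$. The paper does not deduplicate the constraints of all $28$ monomials down to $17$; it \emph{selects}: as its proof says explicitly, only the constraints coming from monomials of the two shapes $p_t^2p_t^{(1)}p_t^{(3)}$ and $p_t\,p_t^{(1)}p_t^{(1)}p_t^{(2)}$ are retained, because only those are needed later in the proof of Costa's EPI. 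That alone produces the count: there are $4\times 2=8$ monomials of the first shape (yielding $R^{(2)}_{1,a,b},\ldots,R^{(2)}_{8,a,b}$) and $3\times 3=9$ of the second (yielding $R^{(2)}_{9,a,b},\ldots,R^{(2)}_{17,a,b}$), hence $17$, one constraint per monomial, with no collapsing argument at all. Your classes $(2,2,0,0)$ and $(1,1,1,1)$ are simply never processed.

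Moreover, the deduplication you assert ($28\to 17$) is provably false, so the final step of your argument would fail. Take the $(2,2,0,0)$ monomial $M=p_t^2\frac{\partial^2p_t}{\partial^2 x_{a,t}}\frac{\partial^2p_t}{\partial^2 x_{b,t}}$. Both non-constant factors have order $2$, and integrating either one by parts via Lemma~\ref{lm-cons1} gives (up to swapping $a$ and $b$)
\[
R=p_t^2\frac{\partial^2p_t}{\partial^2 x_{a,t}}\frac{\partial^2p_t}{\partial^2 x_{b,t}}
+p_t^2\frac{\partial p_t}{\partial x_{a,t}}\frac{\partial^3 p_t}{\partial x_{a,t}\partial^2 x_{b,t}}
-p_t\left(\frac{\partial p_t}{\partial x_{a,t}}\right)^2\frac{\partial^2p_t}{\partial^2 x_{b,t}},
\]
which contains $M$ with coefficient $1$. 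But $M$ occurs in none of the seventeen constraints of Appendix~C: the only monomials of shape $p_t^2p_t^{(2)}p_t^{(2)}$ appearing there are $p_t^2\bigl(\frac{\partial^2p_t}{\partial^2 x_{a,t}}\bigr)^2$, $p_t^2\bigl(\frac{\partial^2p_t}{\partial^2 x_{b,t}}\bigr)^2$, $p_t^2\bigl(\frac{\partial^2p_t}{\partial x_{a,t}\partial x_{b,t}}\bigr)^2$, $p_t^2\frac{\partial^2p_t}{\partial^2 x_{a,t}}\frac{\partial^2p_t}{\partial x_{a,t}\partial x_{b,t}}$, and $p_t^2\frac{\partial^2p_t}{\partial^2 x_{b,t}}\frac{\partial^2p_t}{\partial x_{a,t}\partial x_{b,t}}$ --- never $M$. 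Consequently $R$ is not equal to any $R^{(2)}_{i,a,b}$, nor even an element of $\span_\R(\{R^{(2)}_{i,a,b}:i=1,\ldots,17\})$. So running your procedure over all $28$ monomials yields strictly more than $17$ distinct constraints: the coincidences you extrapolate from the $D(3,1)$ example do occur for many $(2,2,0,0)$ and $(1,1,1,1)$ monomials, but not for all of them, and the number $17$ cannot be recovered by deduplication.
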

\begin{proof}
From Lemma \ref{lm-cons1}, the constraints are from
monomials in $\R[\V_{a,b}]$ with degree 4 and total order 4.
There exist 60 such monomials and the proof of Costa's EPI
need only 17 constraints which are from monomials
of the form $p_t^2p_t^{(1)}p_t^{(3)}$ and $p_t p_t^{(1)}p_t^{(1)}p_t^{(2)}$.
We give an illustrative example.
For the monomial
$p_t^2 \dfrac{\partial p_t}{\partial x_{a,t}}
 \dfrac{\partial^3p_t}{\partial^2 x_{a,t}\partial x_{b,t}}$,
using formula for integration by parts, we have
{\small \begin{equation*}
\begin{array}{ll}
&\displaystyle{\int_{\mathbf{R}^n}\left(\dfrac{1}{p_t}\dfrac{\partial^3p_t}{\partial^2 x_{a,t}\partial x_{b,t}}\dfrac{\partial p_t}{\partial x_{a,t}}\right)\d x_t}\\[0.4cm]
=&\displaystyle{\int_{-\infty}^{\infty}\cdots\int_{-\infty}^{\infty}\int_{-\infty}^{\infty}\left(\dfrac{1}{p_t}\dfrac{\partial^3p_t}{\partial^2 x_{a,t}\partial x_{b,t}}\dfrac{\partial p_t}{\partial x_{a,t}}\right)dx_{a,t}\d x_t^{(a)}}\\
=&\displaystyle{\int_{-\infty}^{\infty}\cdots\int_{-\infty}^{\infty}\left(\dfrac{1}{p_t}\dfrac{\partial^2p_t}{\partial x_{a,t}\partial x_{b,t}}\dfrac{\partial p_t}{\partial x_{a,t}}\right)\Bigg|_{x_{a,t}=-\infty}^{\infty}\d x_t^{(a)}}
-\displaystyle{\int_{\mathbf{R}^n}\left[\dfrac{\partial^2p_t}{\partial x_{a,t}\partial x_{b,t}}\dfrac{\partial}{\partial x_{a,t}}\left(\dfrac{\frac{\partial p_t}{\partial x_{a,t}}}{p_t}\right)\right]\d x_t}.
\end{array}\end{equation*}}
By Lemma \ref{lemma4},
{\small\begin{equation*}
\begin{array}{ll}
&\displaystyle{\int_{\mathbf{R}^n}\left(\dfrac{1}{p_t}\dfrac{\partial^3p_t}{\partial^2 x_{a,t}\partial x_{b,t}}\dfrac{\partial p_t}{\partial x_{a,t}}+\dfrac{\partial^2p_t}{\partial x_{a,t}\partial x_{b,t}}\dfrac{\partial}{\partial x_{a,t}}\left(\dfrac{\frac{\partial p_t}{\partial x_{a,t}}}{p_t}\right)\right)\d x_t}
=\displaystyle{\int_{\mathbf{R}^n}\dfrac{C_{a,b}}{p_t^3}\d x_t}=0,
\end{array}\end{equation*}}
where  $C_{a,b}=
p_t^2\frac{\partial^3p_t}{\partial^2 x_{a,t}\partial x_{b,t}}\frac{\partial p_t}{\partial x_{a,t}}
+\frac{\partial^2p_t}{\partial x_{a,t}\partial x_{b,t}}(p_t^2\frac{\partial^2 p_t}{\partial^2 x_{a,t}}-p_t(\frac{\partial p_t}{\partial x_{a,t}})^2)$
is a second order constraint.
\end{proof}

\begin{lemma}\label{lem-41}
From Lemma \ref{lm-cons2}, we obtain constraints
$R^{(0)}_i = \sum_{a=1}^n\sum_{b=1}^n R^{(0)}_{i,a,b}$ for $i=1,2$, where
{\small
\begin{equation}\label{2.6}
\begin{array}{ll}
%
%
%
R^{(0)}_{1,a,b}= p_t^2\dfrac{\partial^3p_t}{\partial x_{a,t}\partial^2x_{b,t}}\dfrac{\partial p_t}{\partial x_{a,t}}
+\dfrac{\partial^2p_t}{\partial^2 x_{a,t}}\left[p_t^2\dfrac{\partial^2p_t}{\partial^2x_{b,t}}
-p_t\left(\dfrac{\partial p_t}{\partial x_{b,t}}\right)^2\right],\\[0.3 cm]
R^{(0)}_{2,a,b}=p_t\dfrac{\partial^2p_t}{x_{a,t}^2}\left(\dfrac{\partial p_t}{\partial x_{b,t}}\right)^2
+2\dfrac{\partial p_t}{\partial x_{a,t}}\left[p_t\dfrac{\partial^2p_t}{\partial x_{a,t}\partial x_{b,t}}\dfrac{\partial p_t}{\partial x_{b,t}}
-\dfrac{\partial p_t}{\partial x_{a,t}}\left(\dfrac{\partial p_t}{\partial x_{b,t}}\right)^2\right].
\end{array}
\end{equation}}
\end{lemma}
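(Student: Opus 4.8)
The plan is to exhibit $R^{(0)}_1$ and $R^{(0)}_2$ as the constraints produced by Lemma \ref{lm-cons2} from two explicitly chosen scalar inputs $Q$, and then to verify that the coordinate expansion of the resulting coordinate-free expressions reproduces the stated component formulas summed over $a,b$. Recall that for $m=2$ Lemma \ref{lm-cons2} takes a scalar $Q=\prod_{i=1}^{4}\nabla^{(k_i)}p_t$ with $\sum_i k_i=4$ and $k_i\in[3]_0$, isolates the factor $w=\nabla^{(o)}p_t$ of largest (odd) order via $Q=Q_1*w$, and returns $R=Q+p_t^{3}\,\nabla^{(o-1)}p_t*\nabla\!\left(\frac{Q_1}{p_t^{3}}\right)$, the surface term in the derivation vanishing by Lemma \ref{lemma2}. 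So the task reduces to choosing the right $Q$ in each case and carrying out the differentiation.

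For $i=1$ I would take $Q=p_t^{2}\,\nabla^{(1)}p_t\cdot\nabla^{(3)}p_t$ (factor orders $0,0,1,3$, summing to $4$), with $w=\nabla^{(3)}p_t$, $o=3$, and $Q_1=p_t^{2}\,\nabla^{(1)}p_t$. Since $Q_1/p_t^{3}=\nabla p_t/p_t$ and $\nabla\cdot(\nabla p_t/p_t)=\nabla^2 p_t/p_t-\|\nabla p_t\|^2/p_t^2$, Lemma \ref{lm-cons2} yields
$$
R=p_t^{2}\,\nabla^{(1)}p_t\cdot\nabla^{(3)}p_t+p_t^{2}\,(\nabla^{(2)}p_t)^2-p_t\,\nabla^{(2)}p_t\,\|\nabla^{(1)}p_t\|^2 .
$$
Expanding $\nabla^{(2)}=\nabla^2$ and $\nabla^{(3)}=\nabla\nabla^2$ in coordinates, and using $\sum_b \partial^3 p_t/\partial x_{a,t}\partial^2 x_{b,t}=\partial(\nabla^2 p_t)/\partial x_{a,t}$, shows term by term that $R=\sum_{a,b}R^{(0)}_{1,a,b}$. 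For $i=2$ I would take $Q=(\nabla^{(1)}p_t\cdot\nabla^{(1)}p_t)^2=\|\nabla p_t\|^4$ (orders $1,1,1,1$), with $w=\nabla^{(1)}p_t$, $o=1$, $Q_1=\|\nabla p_t\|^2\,\nabla p_t$, so that $\nabla^{(o-1)}p_t=p_t$; a direct computation of $\nabla\cdot(\|\nabla p_t\|^2\nabla p_t/p_t^{3})$ gives
$$
R=-2\,\|\nabla^{(1)}p_t\|^4+p_t\,\nabla p_t\cdot\nabla\|\nabla p_t\|^2+p_t\,\nabla^{(2)}p_t\,\|\nabla^{(1)}p_t\|^2 ,
$$
which I would match against $\sum_{a,b}R^{(0)}_{2,a,b}$ from \eqref{2.6}.

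The only nontrivial bookkeeping, and hence the step I expect to be the main obstacle, is identifying the middle term $p_t\,\nabla p_t\cdot\nabla\|\nabla p_t\|^2$ with the cross term $2\sum_{a,b}\frac{\partial p_t}{\partial x_{a,t}}\,p_t\,\frac{\partial^2 p_t}{\partial x_{a,t}\partial x_{b,t}}\,\frac{\partial p_t}{\partial x_{b,t}}$ appearing in $R^{(0)}_{2,a,b}$; this requires the symmetry of mixed partials through $\partial\|\nabla p_t\|^2/\partial x_{a,t}=2\sum_b \frac{\partial p_t}{\partial x_{b,t}}\frac{\partial^2 p_t}{\partial x_{a,t}\partial x_{b,t}}$. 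Beyond this, I would verify that each chosen $Q$ satisfies the hypotheses of Lemma \ref{lm-cons2} and that the associated surface integrals fall under Lemma \ref{lemma2}: in both cases the scalar prefactor has even total order in its numerator ($\nabla^{(2)}p_t/p_t$ for $i=1$ and $\|\nabla^{(1)}p_t\|^2/p_t^2$ for $i=2$) while the remaining vector factor $\nabla^{(1)}p_t$ has odd order, so the boundary contributions vanish. These verifications are routine once the two inputs $Q$ are fixed.
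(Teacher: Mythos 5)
Your proposal is correct and takes essentially the same route as the paper: both apply the construction of Lemma \ref{lm-cons2} (integration by parts via Lemma \ref{lemma1}, with surface terms vanishing by Lemma \ref{lemma2}) to admissible scalars $Q$ of degree and total order $4$, and your choice $Q=p_t^2\,\nabla p_t\cdot\nabla^{(3)}p_t$ is exactly the paper's worked example yielding $R^{(0)}_{1}$. The only difference is one of completeness, not method: the paper lists all four admissible forms of $Q$ and notes they collapse to two distinct constraints while working out only $R^{(0)}_{1}$, whereas you explicitly derive $R^{(0)}_{2}$ from $Q=\|\nabla p_t\|^4$ (one of the paper's listed forms), and your computation and coordinate matching are correct.
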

\begin{proof}
For $m=2$, $Q$ in Lemma \ref{lm-cons2} has 4  forms:
$\nabla^{(3)}p_t\nabla p_t p_t^2$, $(\nabla^2 p_t)^2p_t^2$,
$\nabla^2p_t\nabla p_t \nabla p_t p_t$, $(\nabla p_t)^4$.
From these $Q$, we obtain 2 distinct constraints.
We give an illustrative example.
Based on Lemma \ref{lemma2}, we have
\begin{equation*}
\begin{array}{ll}
\displaystyle{\int_{\mathbf{R}^n}\dfrac{\nabla(\nabla^2p_t)\cdot\nabla p_t}{p_t}\d x_t}\\[0.3cm]
=\lim\limits_{r\rightarrow\infty}\left(\displaystyle{\int_{S_r}\dfrac{\nabla^2p_t}{p_t}\nabla p_t\cdot ds_r}-\displaystyle{\int_{V_r}\nabla^2p_t\nabla\left(\dfrac{\nabla p_t}{p_t}\right)\d V_r}\right)\\
\overset{{\eqref{eq-lem2}}}{=}-\displaystyle{\int_{\mathbf{R}^n}\nabla^2p_t\nabla\left(\dfrac{\nabla p_t}{p_t}\right)\d x_t},
\end{array}
\end{equation*}
which leads to a constraint:
{\small
\begin{equation*}
\begin{array}{ll}
\displaystyle{\int_{\mathbf{R}^n}\dfrac{\nabla(\nabla^2p_t)\cdot\nabla p_t}{p_t}+\nabla^2p_t\nabla\left(\dfrac{\nabla p_t}{p_t}\right)\d x_t}\\
=\displaystyle{\int_{\mathbf{R}^n}\sum\limits_{a=1}^{n}\sum\limits_{b=1}^{n}\left[\dfrac{\frac{\partial^3p_t}{\partial x_{a,t}\partial^2x_{b,t}}\frac{\partial p_t}{\partial x_{a,t}}}{p_t}+\dfrac{\partial^2 p_t}{\partial^2 x_{a,t}}\dfrac{\partial}{\partial x_{b,t}}\left(\dfrac{\frac{\partial p_t}{x_{b,t}}}{p_t}\right)\right]\d x_t} \\
=
\displaystyle{\int_{\mathbf{R}^n}\sum\limits_{a=1}^{n}\sum\limits_{b=1}^{n}
\dfrac{C_{a,b}}{p_t^3} \d x_t}=0
\end{array}
\end{equation*}}
where
$C_{a,b}= p_t^2\frac{\partial^3p_t}{\partial x_{a,t}\partial^2x_{b,t}}\frac{\partial p_t}{\partial x_{a,t}}
 +p_t^2\dfrac{\partial^2p_t}{\partial^2 x_{a,t}}\dfrac{\partial^2p_t}{\partial^2x_{b,t}}
 -p_t\dfrac{\partial^2p_t}{\partial^2 x_{a,t}}\left(\dfrac{\partial p_t}{\partial x_{b,t}}\right)^2$.
\end{proof}

For $m=2$, Lemma \ref{lm-cons3} is not needed, since there exist no first order constraints.

\subsection{Prove Costa's EPI}
%
It suffices to show  $\frac{\d^2 N(X_t)}{\d^2 t}\le0$, since
$\frac{\d N(X_t)}{\d t}\ge0$ is a direct consequence of \eqref{H2}.
We first compute $\frac{\d^2 N(X_t)}{\d^2 t}$:
{\small
\begin{equation}\label{2.7}
\begin{array}{ll}
&\dfrac{\d^2N(X_t)}{\d^2t}=\dfrac{\d^2}{\d^2 t}e^{(2/n)H(X_t)}\\[0.2cm]
&=e^{(2/n)H(X_t)}\left[\left(\dfrac{2}{n}\dfrac{\mathrm{d}}{\mathrm{d}t}H(X_t)\right)^2+\dfrac{2}{n}\dfrac{\mathrm{d}^2}{\mathrm{d}t^2}H(X_t)\right]\\[0.3cm]
&\overset{{\eqref{H2}}}{=}
\dfrac{1}{n}e^{(2/n)H(X_t)}\left[\left(\displaystyle{\int_{\mathbf{R}^n}\dfrac{\|\nabla p_t\|^2}{p_t}\mathrm{d}x_t}\right)^2+\dfrac{\mathrm{d}}{\mathrm{d}t}\displaystyle{\int_{\mathbf{R}^n}\dfrac{\|\nabla p_t\|^2}{p_t}\mathrm{d}x_t}\right]\\[0.3cm]
&\overset{{\eqref{H3}}}{=}
\dfrac{1}{n}e^{(2/n)H(X_t)}\left[\left(\mathbb{E}[\nabla^2\log p_t]\right)^2
 +\dfrac{\mathrm{d}}{\mathrm{d}t}\displaystyle{\int_{\mathbf{R}^n}\dfrac{\|\nabla p_t\|^2}{p_t}\mathrm{d}x_t}\right]\\[0.3cm]
&\leq \dfrac{1}{n}e^{(2/n)H(X_t)}\left\{\mathbb{E}\left[\sum\limits_{a=1}^n\left(\dfrac{\partial^2\log p_t}{\partial^2 x_{a,t}}\right)^2\right]
+\dfrac{\mathrm{d}}{\mathrm{d}t}\displaystyle{\int_{\mathbf{R}^n}\dfrac{\|\nabla p_t\|^2}{p_t}\mathrm{d}x_t}
\right\}\\[0.3cm]
&= \dfrac{1}{n}e^{(2/n)H(X_t)}\left\{\displaystyle{\int_{\mathbf{R}^n}\sum\limits_{a=1}^{n}\left[p_t\left(\dfrac{\partial^2\log p_t}{\partial^2 x_{a,t}}\right)^2\right]\mathrm{d}x_t
+\int_{\mathbf{R}^n}\sum\limits_{a=1}^{n}\dfrac{\mathrm{d}}{\mathrm{d}t}\left(\dfrac{\left(\frac{\partial p_t}{\partial x_{a,t}}\right)^2}{p_t}\right)\mathrm{d}x_t}\right\}\\[0.3cm]
%
%
&\overset{{\eqref{H1}}}{=}
\dfrac{1}{n}e^{(2/n)H(X_t)}\left[\displaystyle{\int_{\mathbf{R}^n}
\frac{J_{2,n}}{p_t^3}\mathrm{d}x_t}\right]
\end{array}
\end{equation}}
where $J_{2,n} = \sum\limits_{a=1}^{n}S_a+\sum\limits_{a=1}^{n}\sum\limits_{b=1}^{n}T_{a,b}$ and
\begin{equation*}
\begin{array}{l}
S_a=p_t^4\left(\frac{\partial^2\log p_t}{\partial^2 x_{a,t}}\right)^2
   = \left(p_t\frac{\partial^2 p_t}{\partial^2 x_{a,t}}-(\frac{\partial p_t}{\partial x_{a,t}})^2 \right)^2,\\
T_{a,b}=p_t^2\frac{\partial p_t}{x_{a,t}}\frac{\partial^3 p_t}{\partial x_{a,t}\partial^2x_{b,t}}-\frac{p_t}{2}\left(\frac{\partial p_t}{x_{a,t}}\right)^2\frac{\partial^2 p_t}{\partial^2x_{b,t}}.
\end{array}
\end{equation*}}

Using Procedure \ref{proc-H} to prove $\frac{\d^2 N(X_t)}{\d^2 t}\le0$, it suffices to write $J_{2,n}$ as
\begin{equation}
\label{eq-sosepi}
J_{2,n} = \sum_{R\in\C_{2,n}} c_R R - S
\end{equation}
where $c_R\in\R$ and $S$ is an SOS. Since $S\ge0, p_t\ge0$, we have a proof for Costa's EPI:
\begin{equation*}
\begin{array}{l}
\frac{\d^2 N(X_t)}{\d^2 t}
\overset{{\eqref{2.7}}}{\le} B\displaystyle{\int_{\mathbf{R}^n}
\frac{J_{2,n}}{p_t^3}\mathrm{d}x_t}
\overset{{\eqref{eq-sosepi}}}{=} B\displaystyle{\int_{\mathbf{R}^n}
\frac{\sum_{R\in\C_{2,n}} c_R R - S}{p_t^3}\mathrm{d}x_t}
= B\displaystyle{\int_{\mathbf{R}^n}
\frac{-S}{p_t^3}\mathrm{d}x_t}\le0
\end{array}
\end{equation*}
where $B=\dfrac{1}{n}e^{(2/n)H(X_t)}\ge0$.
We will find \eqref{eq-sosepi} in two cases, which will be given in the
next two subsections, and thus give a proof for Costa's EPI.

\subsubsection{The univariate case $(n=1)$}

The univariate case is trivial and can be proved using Procedure \ref{proc-H}.
Since $n=1$, the constraints in Lemma \ref{lem-41} are not needed. From \eqref{eq-2cons},
the constraints are
$$
\begin{array}{ll}
\C_{2,1}
&=\{R_{i}^{(0)}, R_{j,a,b}^{(2)} \,:\,i=1,2; j=1,\ldots,17; a,b\in[1]\},\\
&=\{
 R_1=p_t^2\frac{\partial p_t}{\partial x_{1,t}}\frac{\partial^3 p_t}{\partial^3 x_{1,t}}
 +p_t^2(\frac{\partial^2 p_t}{\partial^2 x_{1,t}})^2
 -p_t(\frac{\partial p_t}{\partial x_{1,t}})^2\frac{\partial^2 p_t}{\partial^2 x_{1,t}},\\
&\hskip15pt R_2=3p_t(\frac{\partial p_t}{\partial x_{1,t}})^2\frac{\partial^2 p_t}{\partial^2 x_{1,t}} -2(\frac{\partial p_t}{\partial x_{1,t}})^4\}.
\end{array}
$$
Since $n=1$, from \eqref{2.7},
$J_{2,1}=S_1+T_{1,1} =
 2p_t^2\frac{\partial p_t}{\partial x_{1,t}}\frac{\partial^3 p_t}{\partial^3 x_{1,t}}
 +2p_t^2(\frac{\partial^2 p_t}{\partial^2 x_{1,t}})^2
 -5p_t(\frac{\partial p_t}{\partial x_{1,t}})^2\frac{\partial^2 p_t}{\partial^2 x_{1,t}}
 +2(\frac{\partial p_t}{\partial x_{1,t}})^4
 =2R_1-R_2$, which gives equation \eqref{eq-sosepi} for $n=1$
 and thus proves Costa's EPI for $n=1$.
%
%

\subsubsection{The general case $(n>1)$}

The general case    cannot be proved directly with Procedure \ref{proc-H},
due to the existence of the parameter $n$.
We will reduce the general case to a ``finite" problem which can be solved
with Procedure \ref{proc-H}.

By Lemma \ref{lem-41}, $R^{(0)}_i = \sum_{a=1}^n\sum_{b=1}^n R^{(0)}_{i,a,b}, i=1,2$ are second order  constraints.
From \eqref{2.7}, to prove $\dfrac{\d^2}{\d^2 t}N(X_t)\le0$, it suffices to solve

\noindent{\bf Problem I}.
There exist $c_1,c_2\in\R$ such that
$$
\begin{array}{l}
L_{2,n}=\sum\limits_{a=1}^{n}S_a+\sum\limits_{a=1}^{n}\sum\limits_{b=1}^{n}(T_{a,b}
+ c_1R^{(0)}_{1,a,b} + c_2R^{(0)}_{2,a,b}) \leq0
\end{array}
$$
under the constraints $R^{(2)}_{i,a,b},i=1,\ldots,17$ given in Lemma \ref{lem-42}.

Motivated by   symmetric functions,  for any function $f(a,b)$, we have
\begin{equation}\begin{array}{ll}
\label{2.9a}
\sum\limits_{a,b=1}^{n}f(a,b)=\sum\limits_{1\leq a<b}^{n}\left\{\frac{1}{n-1}[f(a,a)+f(b,b)]+[f(a,b)+f(b,a)]\right\}.
\end{array}\end{equation}
By \eqref{2.9a}, we have
\begin{equation*}\label{2.12}
\begin{array}{ll}
L_{2,n}&=\sum\limits_{a=1}^{n}S_a+\sum\limits_{a=1}^{n}\sum\limits_{b=1}^{n}(T_{a,b}+
   c_1R^{(0)}_{1,a,b}+c_2R^{(0)}_{2,a,b})\\
&=\frac{1}{n-1}\sum\limits_{a<b}(S_{a}+S_{b})
  +\sum\limits_{a<b}\left[\frac{1}{n-1}(T_{a,a}+T_{b,b}
    +c_1(R^{(0)}_{1,a,a}+R^{(0)}_{1,b,b})
    +c_2(R^{(0)}_{2,a,a}+R^{(0)}_{2,b,b}))\right.\\
   &\left.
  + T_{a,b}+T_{b,a}+c_1(R^{(0)}_{1,a,b}+R^{(0)}_{1,b,a})
   +c_2(R^{(0)}_{2,a,b}+R^{(0)}_{2,b,a})\right]\\
&=\sum\limits_{a<b}\left(\frac{1}{n-1}L_{1,a,b}+L_{2,a,b}\right),
\end{array}
\end{equation*}
where
$$\begin{array}{ll}
L_{1,a,b}=    S_{a}+S_{b}+ T_{a,a}+T_{b,b}
    +c_1({R}^{(0)}_{1,a,a}+{R}^{(0)}_{1,b,b})
    +c_2({R}^{(0)}_{2,a,a}+{R}^{(0)}_{2,b,b}),\\[0.15cm]
L_{2,a,b}=T_{a,b}+T_{b,a}+c_1({R}^{(0)}_{1,a,b}+{R}^{(0)}_{1,b,a})
   +c_2({R}^{(0)}_{2,a,b}+{R}^{(0)}_{2,b,a}).
\end{array}
$$

To prove {\bf Problem I}, it suffices to prove

\noindent{\bf Problem II}. There exist $c_1, c_2\in\R$ such that $L_{1,a,b},L_{2,a,b}\leq0$ under the  constraints $R^{(2)}_{i,a,b},i=1,\ldots,{17}$.

In {\bf Problem II}, the subscripts $a$ and $b$ are fixed and we will not
explicitly give them anymore. Thus, we denote $L_1=L_{1,a,b},L_2=L_{2,a,b}$ in the rest of this section.
We now prove {\bf Problem II} with  Procedure \ref{proc-H}

In Step 1, for {\bf Problem II}, the constraints are
$\overline{\C}_{2,n}=\{R^{(2)}_{j,a,b}\,:\,  j=1,\ldots,17\}$.

In Step 2, the new variables are all the monomials in $\R[{\mathcal V}_{a,b}]$
with degree 2 and total order 2 (${\mathcal V}_{a,b}$ is defined in \eqref{eq-v2}):
$$
\begin{array}{ll}
&
m_1=\left(\frac{\partial p_t(x_t)}{x_{a,t}}\right)^2,\
m_2=\left(\frac{\partial p_t(x_t)}{x_{b,t}}\right)^2,\
m_3=\frac{\partial p_t(x_t)}{\partial x_{a,t}}\frac{\partial p_t(x_t)}{x_{b,t}},\\
&
 m_4=p_t(x_t)\frac{\partial^2p_t(x_t)}{\partial x_{a,t}\partial x_{b,t}},\
m_5=p_t(x_t)\frac{\partial^2p_t(x_t)}{\partial^2 x_{a,t}},\ m_6=p_t(x_t)\frac{\partial^2p_t(x_t)}{\partial^2x_{b,t}}.
\end{array}
$$

In Step 3, we  obtain
${\C}_{2,n,1}=\{\widehat{R}_i,i=1,\ldots,7\}$
and ${\C}_{2,n,2}=\{\widetilde{R}_i,i=1,\ldots,10\}$, where
$$\begin{array}{ll}
\widehat{R}_1=m_1m_6-2m_3^2+2m_3m_4,\ &\widehat{R}_2=-2m_2m_3+m_2m_4+2m_3m_6,\\
\widehat{R}_3=-2m_2^2+3m_2m_6,\ &\widehat{R}_4=-2m_1m_3+m_1m_4+2m_3m_5,\\
\widehat{R}_5=m_2m_5-2m_3^2+2m_3m_4,\ &\widehat{R}_6=-2m_2m_3+3m_2m_4,\\
\widehat{R}_7=-2m_1^2+3m_1m_5. &\\
\widetilde{R}_1=p_t^2\frac{\partial p_t}{\partial x_{b,t}}\frac{\partial^3p_t}{\partial^3 x_{b,t}}-m_2m_6+m_6^2
& \widetilde{R}_2=p_t^2\frac{\partial p_t}{\partial x_{a,t}}\frac{\partial^3p_t}
{\partial^3 x_{a,t}}-m_1m_5+m_5^2\\
\widetilde{R}_3=p_t^2\frac{\partial p_t}{\partial x_{a,t}}\frac{\partial^3p_t}{\partial x_{a,t}\partial^2x_{b,t}} -m_3m_4+m_4^2
& \widetilde{R}_4=p_t^2\frac{\partial p_t}{\partial x_{b,t}}\frac{\partial^3p_t}{\partial^2 x_{a,t}x_{b,t}}-m_3m_4+m_4^2.
\end{array}$$
$\widetilde{R}_k,k=5,\ldots,10$ are not given, because they are not used in the proof.

In Step 4, there exists one intrinsic constraint: $\widehat{R}_{8}=m_1m_2-m_3^2$ and $N_2=8$.

In Step 5, eliminating the non-quadratic monomials in $L_{1}$ and $L_{2}$
using $\C_{2,n,2}$, we have
\begin{equation*}
\begin{array}{ll}
\widehat{L}_1
&={L}_1-(c_1+1)(\widetilde{R}_1+\widetilde{R}_2)\\
&=-\frac{1}{2}(2c_2-1)(2m_1^2-3m_1m_5+2m_2^2-3m_2m_6),\\
\widehat{L}_2
&={L}_2-(c_1+1)(\widetilde{R}_3+\widetilde{R}_4)\\
&=(c_2-c_1-\frac{1}{2})m_1m_6+(c_2-c_1-\frac{1}{2})m_2m_5-4c_2m_3^2\\
&+(2c_1+4c_2+2)m_3m_4-(2+2c_1)m_4^2+2c_1m_5m_6\\
\end{array}\end{equation*}
which are quadratic forms in $m_i$.

In Step 6,
using the Matlab program in Appendix B, we obtain the following SOS representation
\begin{equation}\label{2.16}
\begin{array}{ll}
\widehat{L}_1=\sum\limits_{k=1}^{8}p_k\widehat{R}_{k},\ \ \widehat{L}_2=\sum\limits_{k=1}^{8}q_k\widehat{R}_{k}-2(m_3-m_4)^2,
\end{array}
\end{equation}
where
$p_3=-\frac{329}{657},\ p_7= -\frac{329}{657},\
 q_1=- \frac{329}{657},\ q_5= -\frac{329}{657},\
c_2= -\frac{1}{1314},\
c_1=p_1=p_2=p_4=p_5=p_6=p_{8}=q_2=q_3=q_4=q_6=q_7=q_{8}=0.$
%
%
So, {\bf Problem II} is solved and thus Costa's EPI is proved.


\section{Third derivative of $H(X_t)$}
\label{sec-3}
In this section, we prove $D(3,n):\frac{\d^3}{\d^3t}H(X_t)\ge 0$ for $n=2,3,4$ with Procedure \ref{proc-H}.

\subsection{Compute the third order constraints}
\label{sec-31}
In this section, we give the third order constraints
\begin{equation}
\label{eq-3consn}
\C_{3,n}=\{R_{i,a,b,c}^{(3)}, R_{j}^{(0)},R_{k,a,b}^{(2)},\,:\,i=1,\ldots,955, j=1,\ldots,8,k=1,\ldots,20,a,b,c\in[n]\},
\end{equation}
where $R_{i,a,b,c}^{(3)}, R_{j}^{(0)},R_{k,a,b}^{(2)}$ are given in Lemmas \ref{lm-3cons1}, \ref{lm-3cons2}, \ref{lm-3cons3} below.
%
%

Similar to \eqref{eq-v2}, we introduce the notation
\begin{equation}\label{eq-v3}
{\mathcal V}_{a,b,c} =\{
\dfrac{\partial^h p_t}{\partial^{h_1} x_{a,t}\partial^{h_2} x_{b,t}\partial^{h_3} x_{c,t}}:
h=h_1+h_2+h_3 \in\{0,1,\cdots,5\}\}
\end{equation}
where $a,b,c$ are variables taking values in $[n]$.
Then $\P_{3,n}=\cup_{a=1}^n\cup_{b=1}^n\cup_{c=1}^n \V_{a,b,c}$.

\begin{lemma}
\label{lm-3cons1}
Based on Lemma \ref{lm-cons1}, we obtain 955 third order  constraints $R^{(3)}_{i,a,b,c},\ i=1,\ldots,955$,
in  $\R[\V_{a,b,c}]$ where, $a,b,c$ are variables taking values in $[n]$.
These constraints and the Maple program to compute them are given in Appendix $E_1$.
\end{lemma}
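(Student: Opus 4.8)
The statement to prove is Lemma \ref{lm-3cons1}: that applying the construction of Lemma \ref{lm-cons1} yields exactly 955 third-order constraints $R^{(3)}_{i,a,b,c}$ in $\R[\V_{a,b,c}]$. My plan is to recognize that this is fundamentally a counting-plus-construction statement rather than a deep analytic result, since the \emph{existence} of each individual constraint is already guaranteed by Lemma \ref{lm-cons1}; the work lies in enumerating the source monomials and verifying they produce the claimed number of genuinely distinct (that is, $\R$-linearly independent modulo lower relations) constraints.

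First I would identify the source objects: by Lemma \ref{lm-cons1}, each constraint arises from a monomial $M$ in $\R[\V_{a,b,c}]$ having degree $2m=6$ and total order $2m=6$, where $\V_{a,b,c}$ as in \eqref{eq-v3} consists of partial derivatives of $p_t$ of order $h\in\{0,1,\ldots,5\}$ taken with respect to the three index-variables $x_{a,t},x_{b,t},x_{c,t}$. So the first step is to enumerate all such degree-$6$, total-order-$6$ monomials. Writing each monomial as $\prod v_i^{d_i}$ with $v_i\in\V_{a,b,c}$, the degree constraint forces $\sum d_i=6$ and the total-order constraint forces $\sum d_i\cdot\ord(v_i)=6$; combined, these mean the six factors (counted with multiplicity) have orders $o_1,\ldots,o_6$ that are nonnegative integers summing to $6$, i.e.\ a partition-type count, refined by the number of ways each positive order can be realized as a mixed partial in the three symbols $x_{a,t},x_{b,t},x_{c,t}$. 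The second step is to apply the integration-by-parts reduction of Lemma \ref{lm-cons1} to each admissible monomial, which produces a corresponding $6$th-order differential form $R$; this is purely mechanical and is exactly what the accompanying Maple program in Appendix $E_1$ carries out.

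The third step, and the one I would treat as the genuine content, is to account for \emph{why the count is $955$} rather than the raw number of monomials. Several monomials can yield the same constraint or constraints that are $\R$-linear combinations of others, and the choice of which factor $v$ (with $\ord(v)>0$) to integrate by parts is not unique; different choices give constraints spanning the same space. I would therefore perform $\R$-Gaussian elimination on the full list of generated forms — treating the degree-$6$, total-order-$6$ monomials as formal variables and the lexicographic order of the excerpt as the term order — and report the rank of the resulting span; the claim is that, together with the symmetry structure over the index variables $a,b,c\in[n]$, this yields $955$ constraints. The honest statement is that I would verify the number computationally, exactly as the authors do, since the enumeration and elimination over the combinatorial explosion of mixed third-order partials is precisely the regime the paper introduces software to handle.

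The main obstacle is bookkeeping rather than mathematics: correctly enumerating the mixed partials in three symbols under the order-$\le 5$ ceiling of \eqref{eq-v3} and the degree/total-order balance, and then tracking linear dependencies without double-counting or missing constraints. In particular one must be careful that the partials $\partial^{h} p_t/(\partial^{h_1}x_{a,t}\,\partial^{h_2}x_{b,t}\,\partial^{h_3}x_{c,t})$ are indexed by \emph{ordered} symbol-slots $a,b,c$ while the underlying mixed partial is symmetric in commuting variables, so equal-index collisions (e.g.\ $a=b$) and the resulting coincidences of $\V_{a,b,c}$ elements must be handled consistently with how they are later summed in $F_{3,n}=\sum_{a,b,c}F_{3,n,\a}$. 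Because each generated $R$ is a $6$th-order differential form by the final verification step inside the proof of Lemma \ref{lm-cons1} (using \eqref{eq-pm3}), the differential-homogeneity of every output is automatic; hence no analytic estimate is needed, and the proof reduces to certifying the enumeration and the rank computation recorded in Appendix $E_1$.
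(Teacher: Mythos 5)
Your proposal follows essentially the same route as the paper: enumerate all monomials in $\R[\V_{a,b,c}]$ of degree $6$ and total order $6$ (the paper counts $3222$ of them, organized by the ten order-patterns $H_6$ listed in Appendix $E_1$), apply the integration-by-parts construction of Lemma~\ref{lm-cons1} to each positive-order factor, and certify the resulting count computationally with the Maple program. One point where your reading diverges from the paper, and would actually produce a different number: the figure $955$ is \emph{not} the rank of the span after $\R$-Gaussian elimination, as your third step proposes. It is the number of \emph{distinct constraint expressions} (with $a,b,c$ kept as symbolic indices) after removing literal duplicates; the paper deliberately defers all linear-algebra reduction to Steps 3--4 of Procedure~\ref{proc-H}, where, after specializing $a,b,c\in[n]$ and removing repeats, one gets e.g.\ $N_1=143$ for $n=2$ and the sets $\C_{3,2,1}$, $\C_{3,2,2}$ of sizes $48$ and $52$. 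So if you performed the rank computation you describe and reported that as the lemma's count, you would not reproduce $955$; the correct reading of "distinct" at this stage is syntactic, not linear-algebraic. Apart from this bookkeeping mismatch, your treatment of the equal-index collisions and of why differential homogeneity of each output is automatic (via the final verification inside the proof of Lemma~\ref{lm-cons1}) is consistent with what the paper does.
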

\begin{proof}
%
%
From Lemma \ref{lm-cons1}, these constraints are from
the monomials in $\R[\V_{a,b,c}]$ with total order 6 and degree 6.
In total, we have 3222 such monomials and  obtain 955 distinct constraints.
\end{proof}

%
\begin{lemma}
\label{lm-3cons2}
Based on Lemma \ref{lm-cons2}, we obtain 6 third order constraints
${R}^{(0)}_i =  \sum\limits_{a=1}^{n}\sum\limits_{b=1}^{n}\sum\limits_{c=1}^{n} \mathcal{R}^{(0)}_{i,a,b,c}$,
$i=1,\ldots,6$ in $\R[\P_{2,n}]$, which can be found in Appendix $E_2$.
\end{lemma}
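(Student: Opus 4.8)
The plan is to derive the six third-order constraints $R^{(0)}_i$ by specializing Lemma \ref{lm-cons2} to the case $m=3$, exactly as was done for $m=2$ in Lemma \ref{lem-41}. First I would enumerate all admissible products $Q=\prod_{i=1}^{6}\nabla^{(k_i)}p_t$ with $\sum_{i=1}^{6}k_i=6$ and $k_i\in[5]_0$ that are scalars (so that $\int_{\mathbf{R}^n}\frac{Q}{p_t^{5}}\d x_t$ makes sense). The scalar requirement forces the number of odd-order factors $\nabla^{(k_i)}p_t$ to be even, and after discarding products that reduce to constraints already obtainable from Lemma \ref{lm-cons1} (the pointwise constraints captured by $R^{(2)}_{i,a,b}$ via integration by parts in a single variable), the genuinely new surface-integral constraints number six. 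I would list them explicitly as the partition data $(k_1,\ldots,k_6)$, organized by how many gradient factors $\nabla p_t$, Laplacian factors $\nabla^2 p_t$, and higher-order factors $\nabla^{(3)}p_t,\nabla^{(4)}p_t,\nabla^{(5)}p_t$ appear.

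For each such $Q$, I would apply the proof scheme of Lemma \ref{lm-cons2}: choose the factor $w=\nabla^{(o)}p_t$ of largest order, write $Q=Q_1*w$, and invoke Lemma \ref{lemma1} (integration by parts in $\R^n$) together with the vanishing boundary term from Lemma \ref{lemma2} to obtain
\begin{equation*}
R=Q+p_t^{5}\,\nabla^{(o-1)}p_t*\nabla\!\left(\frac{Q_1}{p_t^{5}}\right),
\end{equation*}
which is a $6$th-order differential form by the same degree-and-total-order bookkeeping used there. The remaining work is to expand $\nabla\!\left(Q_1/p_t^{5}\right)$ using the product and quotient rules, clear the denominator, and record the result as $\mathcal{R}^{(0)}_{i,a,b,c}$ summed over $a,b,c\in[n]$ via the coordinate expansion of the operators $\nabla^{(k)}$ in \eqref{eq-Del}. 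Because $Q_1$ is itself a product of several $\nabla^{(k_j)}p_t$, each differentiation spawns one term per factor, and translating the coordinate-free expression into the indexed form in $\R[\P_{2,n}]$ is the bulk of the computation.

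The main obstacle is verifying that exactly six of the candidate products yield \emph{distinct} constraints modulo $\span_\R(\C_{2,n})$ and modulo the Lemma \ref{lm-cons1} constraints, i.e. that the list is neither redundant nor incomplete. Several partitions of $6$ into allowed parts give formally different $Q$ but, after expansion and $\R$-Gaussian elimination against one another and against the already-derived $R^{(2)}$-type constraints, collapse to the same linear combination of monomials in $\M_{3,n}$; confirming this requires carrying the elimination to the end rather than trusting the partition count. Since the fully expanded forms are large, I would defer the explicit coordinate expressions to Appendix $E_2$ and present here only the derivation template and the final count, as the statement does. The verification that each $R$ is genuinely a $6$th-order differential form is immediate from Lemma \ref{lm-cons2} and needs no separate argument.
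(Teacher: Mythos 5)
Your derivation template coincides with the paper's: specialize Lemma \ref{lm-cons2} to $m=3$, enumerate the products $Q=\prod_{i=1}^{6}\nabla^{(k_i)}p_t$ with $k_i\in[5]_0$ and $\sum_i k_i=6$ (the paper lists all $10$ of them explicitly), apply the integration-by-parts scheme $R=Q+p_t^{5}\,\nabla^{(o-1)}p_t*\nabla\bigl(Q_1/p_t^{5}\bigr)$ to each, and defer the coordinate expansions to Appendix $E_2$. Up to that point your plan is sound.

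The genuine gap is your accounting for the number six. You claim the six survivors are the products that do not ``reduce to constraints already obtainable from Lemma \ref{lm-cons1},'' and that the count is to be certified by elimination against the $R^{(2)}$-type constraints. That criterion contradicts the list the lemma asserts. For instance, the retained constraint $R^{(0)}_1$, which comes from $Q=p_t^4\nabla^{(5)}p_t\cdot\nabla p_t$, \emph{is} obtainable from Lemma \ref{lm-cons1}: integrating the monomial $p_t^4\frac{\partial^5 p_t}{\partial x_{a,t}\partial^2 x_{b,t}\partial^2 x_{c,t}}\frac{\partial p_t}{\partial x_{a,t}}$ by parts in $x_{a,t}$ alone (the Lemma \ref{lm-cons1} operation) and summing the resulting per-index constraints over $a,b,c\in[n]$ reproduces $\sum_{a,b,c}\mathcal{R}^{(0)}_{1,a,b,c}$ term by term, because Green's identity over $\R^n$ is precisely the sum of coordinate-wise integrations by parts. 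So your discarding rule would throw away constraints that the lemma keeps, and it cannot generate the appendix's list. The actual mechanism, which the appendix confirms, is internal to Lemma \ref{lm-cons2} and has nothing to do with the first-type constraints: two products related by a single integration by parts yield \emph{literally identical} expressions $R$. Concretely, the partitions $(4,2)$ and $(3,3)$ give the same $R$, as do $(3,2,1)$ and $(2,2,2)$, $(3,1,1,1)$ and $(2,2,1,1)$, and $(2,1,1,1,1)$ and $(1,1,1,1,1,1)$, while $(5,1)$ and $(4,1,1)$ stand alone; hence $10-4=6$ distinct constraints. A further small error: checking distinctness ``modulo $\span_\R(\C_{2,n})$'' is a category mistake, since elements of $\span_\R(\C_{2,n})$ are $4$th-order differential forms while the constraints here are $6$th-order forms.
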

\begin{proof}
From Lemma \ref{lm-cons2}, we need to consider all expressions
{$Q=\prod_{i=1}^{6}\nabla^{(k_i)}p_t$ satisfying $k_i\in \{0,1,\ldots,5\}$ and $\sum_{i=1}^{6}k_i=6$.}
There exist 10 such expressions:
\begin{equation*}
\begin{array}{l}
p_t^4\nabla^{(5)}p_t\cdot\nabla p_t,\  p_t^4\nabla^{(4)}p_t\nabla^{(2)}p_t,
p_t^3\nabla^{(4)}p_t\parallel\nabla p_t\parallel^2,\\
p_t^3\nabla^{(3)}p_t\cdot\nabla p_t\nabla^{2}p_t,
p_t^2\nabla^{(3)}p_t\cdot\nabla p_t\parallel\nabla p_t\parallel^2,\
p_t\nabla^{2}p_t\parallel\nabla p_t\parallel^4,\\
p_t^4\nabla^{(3)}p_t\cdot\nabla^{(3)}p_t,
p_t^2(\nabla^{2}p_t)^2 \parallel\nabla p_t\parallel^2,
p_t^3(\nabla^{2}p_t)^3,
\parallel\nabla p_t\parallel^6
.
\end{array}
\end{equation*}
From the above 10 expressions, we obtain 6 distinct constraints.
\end{proof}

Apply Lemma \ref{lm-cons3} to the second order constraints
$\C_{2,n}$ in \eqref{eq-2cons} to obtain third order constraints.
\begin{lemma}
\label{lm-3cons3}
We obtain 2 third order constraints
${R}^{(0)}_i =  \sum\limits_{a=1}^{n}\sum\limits_{b=1}^{n}\sum\limits_{c=1}^{n} \mathcal{R}^{(0)}_{i,a,b,c}$,
$i=7,8$
and 20 third order constraints
${R}^{(2)}_{i,a,b} = \sum\limits_{c=1}^{n} \mathcal{R}^{(2)}_{i,a,b,c}$,
$i=1,\ldots,20$,
where $\mathcal{R}^{(0)}_{i,a,b,c}$ and $\mathcal{R}^{(2)}_{i,a,b,c}$
can be found in Appendix $E_3$.
\end{lemma}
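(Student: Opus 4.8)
The plan is to obtain both families by applying the lifting operation of Lemma \ref{lm-cons3} to the second order constraints assembled in \eqref{eq-2cons}: the $17$ forms $R^{(2)}_{i,a,b}$ of Lemma \ref{lem-42} and the two fully summed forms $R^{(0)}_1,R^{(0)}_2$ of Lemma \ref{lem-41}. Recall that Lemma \ref{lm-cons3} sends an $m$th-order constraint $R$ to the $(m+1)$th-order constraint $R_1=p_t^2\frac{\d R}{\d t}-(2m-1)Rp_t\frac{\d p_t}{\d t}$; with $m=2$ this is the explicit map $R\mapsto p_t^2\frac{\d R}{\d t}-3Rp_t\frac{\d p_t}{\d t}$. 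Using the heat equation \eqref{H1} in the form $\frac{\d p_t}{\d t}=\frac12\nabla^2 p_t=\frac12\sum_{c=1}^n\frac{\partial^2 p_t}{\partial^2 x_{c,t}}$, together with the factorwise identity \eqref{eq-pm3}, every $t$-differentiation raises the total order of each monomial by $2$ and contracts in one fresh summation index $c\in[n]$ coming from the Laplacian.

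First I would treat the two summed inputs. Since $R^{(0)}_1$ and $R^{(0)}_2$ already have the shape $\sum_{a,b}R^{(0)}_{i,a,b}$, applying the map and expanding the new Laplacian over $c$ produces expressions of the form $\sum_{a,b,c}\mathcal{R}^{(0)}_{i,a,b,c}$, which are the two new constraints $R^{(0)}_7,R^{(0)}_8$, continuing the numbering of Lemma \ref{lm-3cons2}. Second I would treat the $17$ forms $R^{(2)}_{i,a,b}$: here the indices $a,b$ remain free, the $t$-derivative contracts only the new index $c$, and each output has the form $\sum_c\mathcal{R}^{(2)}_{i,a,b,c}\in\R[\V_{a,b,c}]$. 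In every case I would verify, term by term via \eqref{eq-pm3}, that the result is a genuine $6$th-order differential form (degree $6$ and total order $6$), so that it qualifies as a third order constraint in the sense of Definition \ref{def-41}.

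The point requiring care is the bookkeeping behind the stated counts. When the Laplacian $\sum_c\frac{\partial^2}{\partial^2 x_{c,t}}$ acts on a two-index factor $\frac{\partial^h p_t}{\partial^{h_1}x_{a,t}\partial^{h_2}x_{b,t}}$, the summands with $c\notin\{a,b\}$ yield true three-index derivatives in $\V_{a,b,c}$, whereas the summands $c=a$ and $c=b$ collapse to higher-order two-index derivatives; combined with the Leibniz expansion of $\frac{\d}{\d t}$, this generates a sizable pool of distinct $6$th-order monomials. I would collect all of these, treat their monomials as variables, and run $\R$-Gaussian elimination (as in Step 3 of Procedure \ref{proc-H}) to strip out linear dependencies and expose a maximal independent set; the symbolic computation, recorded in the Maple program of Appendix $E_3$, returns exactly $2$ fully summed constraints $R^{(0)}_7,R^{(0)}_8$ and $20$ partially summed constraints $R^{(2)}_{i,a,b}$. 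The main obstacle is therefore computational rather than conceptual: ensuring that this enumeration is exhaustive and that the reduction neither discards a valid constraint nor retains a redundant one, a verification that is ultimately entrusted to the computer algebra check.
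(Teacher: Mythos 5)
Your mechanism is the correct one: both families in the paper are indeed produced by the lifting map $R\mapsto p_t^2\frac{\d R}{\d t}-3Rp_t\frac{\d p_t}{\d t}$ of Lemma \ref{lm-cons3} (the $m=2$ case), with the heat equation \eqref{H1} contracting in the fresh index $c$, and your treatment of $R^{(0)}_1,R^{(0)}_2\mapsto R^{(0)}_7,R^{(0)}_8$ coincides with the paper's. However, there is a genuine counting gap in the second family. You propose to lift only the $17$ constraints $R^{(2)}_{i,a,b}$ explicitly listed in Lemma \ref{lem-42}. Since Lemma \ref{lm-cons3} assigns to each input constraint exactly \emph{one} lifted constraint, $17$ inputs can yield at most $17$ outputs, so your route cannot produce the $20$ constraints asserted in the statement. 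The paper instead goes back to the full pool behind Lemma \ref{lem-42}: its proof records that there are $60$ second-order constraints arising from the monomials of degree $4$ and total order $4$ in $\R[\V_{a,b}]$, of which only $17$ were written down because only those are needed for Costa's EPI. The paper lifts all $60$ of them and, after removing the repeated ones among the lifted results, exactly $20$ distinct third-order constraints $R^{(2)}_{i,a,b}=\sum_{c=1}^{n}\mathcal{R}^{(2)}_{i,a,b,c}$ remain.

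A secondary point: the paper does not run $\R$-Gaussian elimination at this stage (that happens later, in Step 3 of Procedure \ref{proc-H}); it merely discards duplicates among the $60$ lifted constraints. Your proposed elimination step is not wrong in itself, but it is not what fixes the count --- what fixes the count is enlarging the input pool from the $17$ listed constraints to the full set of $60$.
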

\begin{proof}
From \eqref{eq-2cons}, we need to consider two cases: $R^{(0)}_i$ and $R^{(2)}_{j,a,b}$.
First, we obtain 2 constraints from $R^{(0)}_i,i=1,2$ given in  Lemma \ref{lem-41}.
Take $R^{(0)}_1 = \sum_{a=1}^n\sum_{b=1}^n R^{(0)}_{1,a,b}$ as an example.
By direct computation, we obtain a third order constraint
{\small
$$\begin{array}{ll}
\displaystyle{\dfrac{\d}{\d t}\int_{\mathbf{R}^n}\frac{R^{(0)}_1}{p_t^3}\d x_t}
%
=\displaystyle{\int_{\mathbf{R}^n}\sum_{a=1}^{n}\sum_{b=1}^{n}\dfrac{\d}{\d t}\left(
\frac{R^{(0)}_{1,a,b}}{p_t^3}\right)\d x_t}
\overset{\eqref{H1}}{=}\displaystyle{\int_{\mathbf{R}^n}\sum_{a=1}^{n}\sum_{b=1}^{n}\sum_{c=1}^{n}
\frac{{\breve{R}}_{a,b,c}}{p_t^5}\d x_t}
=0,
\end{array}
$$}
where
{\small
$$\begin{array}{ll}
\breve{\mathcal{R}}_{a,b,c}&=\dfrac{p_t^4}{2}\dfrac{\partial^3p_t}{\partial x_{a,t}\partial^2x_{c,t}}\dfrac{\partial^3p_t}{\partial x_{a,t}\partial^2x_{b,t}}
+\dfrac{p_t^4}{2}\dfrac{\partial p_t}{\partial x_{a,t}}\dfrac{\partial^5p_t}{\partial x_{a,t}\partial^2x_{b,t}\partial^2x_{c,t}}
-\dfrac{p_t^3}{2}\dfrac{\partial p_t}{\partial x_{a,t}}\dfrac{\partial^3p_t}{\partial x_{a,t}\partial^2x_{b,t}}\dfrac{\partial^2p_t}{\partial^2x_{c,t}}\\[0.2cm]
&+\dfrac{p_t^4}{2}\dfrac{\partial^4p_t}{\partial^2 x_{a,t}\partial^2x_{c,t}}\dfrac{\partial^2p_t}{\partial^2x_{b,t}}
+\dfrac{p_t^4}{2}\dfrac{\partial^2p_t}{\partial^2 x_{a,t}}\dfrac{\partial^4p_t}{\partial^2x_{b,t}\partial^2x_{c,t}}
-\dfrac{p_t^3}{2}\dfrac{\partial^2 p_t}{\partial^2 x_{a,t}}\dfrac{\partial^2p_t}{\partial^2x_{b,t}}\dfrac{\partial^2p_t}{\partial^2x_{c,t}}
\\[0.2cm]
&
-\dfrac{p_t^3}{2}\dfrac{\partial^4 p_t}{\partial^2 x_{a,t}\partial^2x_{c,t}}\left(\dfrac{\partial p_t}{\partial x_{b,t}}\right)^2
-p_t^3\dfrac{\partial^2 p_t}{\partial^2 x_{a,t}}\dfrac{\partial p_t}{\partial x_{b,t}}\dfrac{\partial^3p_t}{\partial x_{b,t}\partial^2x_{c,t}}
+p_t^2\dfrac{\partial^2 p_t}{\partial^2 x_{a,t}}\left(\dfrac{\partial p_t}{\partial x_{b,t}}\right)^2\dfrac{\partial^2p_t}{\partial^2x_{c,t}}.
\end{array}
$$}

Second, we obtain 20 third order constraints from
the 60 second order  constraints of the form $R^{(2)}_{6,a,b}$ in Lemma \ref{lem-42} (only 17 of them are given in Lemma \ref{lem-42}. See the proof of Lemma \ref{lem-42} for explanation).
%
For instance, consider $R^{(2)}_{6,a,b}=
p_t^2\frac{\partial^3p_t}{\partial^2 x_{a,t}\partial x_{b,t}}\frac{\partial p_t}{\partial x_{a,t}}
+\frac{\partial^2p_t}{\partial x_{a,t}\partial x_{b,t}}(p_t^2\frac{\partial^2 p_t}{\partial^2 x_{a,t}}-p_t(\frac{\partial p_t}{\partial x_{a,t}})^2)$ in Appendix C,
which satisfies
$\int_{\mathbf{R}^n}\frac{R^{(2)}_{6,a,b}}{p_t^3}\d x_t=0$. We obtain a third order constraint:
%
%
{\small
$$\begin{array}{ll}
\displaystyle{\dfrac{\d}{\d t}\int_{\mathbf{R}^n}\frac{R^{(2)}_{6,a,b}}{p_t^3}\d x_t}
=\displaystyle{\int_{\mathbf{R}^n}\dfrac{\d}{\d t}\left(\frac{R^{(2)}_{6,a,b}}{p_t^3}\right)\d x_t}
\overset{\eqref{H1}}{=}\displaystyle{\int_{\mathbf{R}^n}\sum_{c=1}^{n}\frac{\bar{\mathcal{R}}_{a,b,c}}{p_t^5}\d x_t}=0,
\end{array}
$$}
where
{\small
$$\begin{array}{ll}
\bar{\mathcal{R}}_{a,b,c}\!\!\!&=\frac{p_t^4}{2}\frac{\partial^5p_t}{\partial^2 x_{a,t}\partial x_{b,t}\partial^2x_{c,t}}\frac{\partial p_t}{\partial x_{a,t}}
+\frac{p_t^4}{2}\frac{\partial^3p_t}{\partial^2 x_{a,t}\partial x_{b,t}}\frac{\partial^3 p_t}{\partial x_{a,t}\partial^2x_{c,t}}
-\frac{p_t^3}{2}\frac{\partial^3p_t}{\partial^2 x_{a,t}\partial x_{b,t}}\frac{\partial p_t}{\partial x_{a,t}}\frac{\partial^2 p_t}{\partial^2x_{c,t}}\\
&+\frac{p_t^4}{2}\frac{\partial^4p_t}{\partial x_{a,t}\partial x_{b,t}\partial^2x_{c,t}}\frac{\partial^2 p_t}{\partial^2 x_{a,t}}
-\frac{p_t^3}{2}\frac{\partial^4p_t}{\partial x_{a,t}\partial x_{b,t}\partial^2x_{c,t}}\left(\frac{\partial p_t}{\partial x_{a,t}}\right)^2
+\frac{p_t^4}{2}\frac{\partial^2p_t}{\partial x_{a,t}\partial x_{b,t}}\frac{\partial^4 p_t}{\partial^2 x_{a,t}\partial^2x_{c,t}}\\[0.2cm]
&-\frac{p_t^3}{2}\frac{\partial^2p_t}{\partial x_{a,t}\partial x_{b,t}}\frac{\partial^2 p_t}{\partial^2 x_{a,t}}\frac{\partial^2 p_t}{\partial^2x_{c,t}}
-p_t^3\frac{\partial^2p_t}{\partial x_{a,t}\partial x_{b,t}}\frac{\partial p_t}{\partial x_{a,t}}\frac{\partial^3 p_t}{\partial x_{a,t}\partial^2x_{c,t}}
+p_t^2\frac{\partial^2p_t}{\partial x_{a,t}\partial x_{b,t}}\left(\frac{\partial p_t}{\partial x_{a,t}}\right)^2\frac{\partial^2 p_t}{\partial^2x_{c,t}}.
\end{array}$$
}
Removing the repeated ones in the 60 constraints, we obtain 20 third order constraints.
\end{proof}

\subsection{Proof of $D(3,n)\geq0$ for $n=2,3,4$}
\label{section31}

We first compute $F_{3,n}$ in \eqref{eq-tt1}:
\begin{equation}
\label{3.1}
F_{3,n} = \displaystyle{\dfrac{p_t^5}{2}\dfrac{\d^2}{\d  t^2}
\left(\dfrac{\|\nabla p_t\|^2}{p_t}\right)}
 \overset{\eqref{H1}}{=}
\displaystyle{\int_{\mathbf{R}^n}\sum\limits_{a=1}^{n}\sum\limits_{b=1}^{n}\sum\limits_{c=1}^{n}
F_{3,a,b,c}}
\end{equation}
where
{\small
\begin{equation*}\begin{array}{ll}
F_{3,a,b,c}&=\dfrac{p_t^4}{4}\dfrac{\partial^3p_t}{\partial x_{a,t}\partial^2x_{c,t}}\dfrac{\partial^3p_t}{\partial x_{a,t}\partial^2x_{b,t}}
-\dfrac{p_t^3}{4}\dfrac{\partial p_t}{\partial x_{a,t}}\dfrac{\partial^3p_t}{\partial x_{a,t}\partial^2x_{b,t}}\dfrac{\partial^2p_t}{\partial^2x_{c,t}}\\[0.2cm]
&
+\dfrac{p_t^4}{4}\dfrac{\partial p_t}{\partial x_{a,t}}\dfrac{\partial^5p_t}{\partial x_{a,t}\partial^2x_{b,t}\partial^2x_{c,t}}
-\dfrac{p_t^3}{4}\dfrac{\partial p_t}{\partial x_{a,t}}\dfrac{\partial^3p_t}{\partial x_{a,t}\partial^2x_{c,t}}\dfrac{\partial^2p_t}{\partial^2x_{b,t}}\\[0.2cm]
&
+\dfrac{p_t^2}{4}\left(\dfrac{\partial p_t}{\partial x_{a,t}}\right)^2\dfrac{\partial^2p_t}{\partial^2x_{b,t}}\dfrac{\partial^2p_t}{\partial^2x_{c,t}}
-\dfrac{p_t^3}{8}\left(\dfrac{\partial p_t}{\partial x_{a,t}}\right)^2\dfrac{\partial^4p_t}{\partial^2x_{b,t}\partial^2x_{c,t}}.
\end{array}\end{equation*}}

\subsubsection{Proof of $D(3,2)\geq0$}
\label{section3.1}

The proof follows Procedure \ref{proc-H} with  $F_{3,2}$ given in \eqref{3.1} as input.
To make the proof explicit, we will give the key expressions.

In Step 1,
from \eqref{eq-3consn}, the constraints are $\C_{3,2}=\{{R}^{(0)}_{i},R^{(3)}_{j,a,b,c}\,:\,i=1,\ldots,8; j=1,\ldots,955;a,b,c\in[2]\}$.
The constraints $R^{(2)}_{k,a,b}$'s are not used in this case.
Removing the repeated ones, we have $N_1=143$.

In Step 2, the new variables are $\M_{3,2}$ and are listed  in the lexicographical monomial order:
$$\begin{array}{ll}
m_1=p_t^2\frac{\partial p_t^3}{\partial^3 x_{2,t}},
m_2=p_t^2\frac{\partial^3 p_t}{\partial x_{1,t}\partial^2 x_{2,t}},
m_3=p_t^2\frac{\partial^3 p_t}{\partial^2 x_{1,t}\partial x_{2,t}},
m_4=p_t^2\frac{\partial p_t^3}{\partial^3 x_{1,t}},\\
m_5=p_t\frac{\partial^2 p_t}{\partial^2 x_{2,t}}\frac{\partial p_t}{\partial x_{2,t}},
m_6=p_t\frac{\partial^2 p_t}{\partial^2 x_{2,t}}\frac{\partial p_t}{\partial x_{1,t}},
m_7=p_t\frac{\partial^2 p_t}{\partial x_{1,t}\partial x_{2,t}}\frac{\partial p_t}{\partial x_{2,t}},\\
m_8=p_t\frac{\partial^2 p_t}{\partial x_{1,t}\partial x_{2,t}}\frac{\partial p_t}{\partial x_{1,t}},
m_9=p_t\frac{\partial^2 p_t}{\partial x_{1,t}^2}\frac{\partial p_t}{\partial x_{2,t}},
m_{10}=p_t\frac{\partial^2 p_t}{\partial x_{1,t}^2}\frac{\partial p_t}{\partial x_{1,t}},\\
m_{11}=\left(\frac{\partial p_t}{\partial x_{2,t}}\right)^3,
m_{12}=\left(\frac{\partial p_t}{\partial x_{2,t}}\right)^2\frac{\partial p_t}{\partial x_{1,t}},
m_{13}=\frac{\partial p_t}{\partial x_{2,t}}\left(\frac{\partial p_t}{\partial x_{1,t}}\right)^2,
m_{14}=\left(\frac{\partial p_t}{\partial x_{1,t}}\right)^3.
\end{array}$$

In Step 3, we  obtain $\C_{3,2,1}$ and $\C_{3,2,2}$ which contain
48 and 52 constraints, respectively.

In Step 4, there exist 15 intrinsic constraints:
{\small
\begin{equation*}
\begin{array}{ll}
 m_{5}m_{8}=m_{6}m_{7}, m_{5}m_{10}=m_{6}m_{9}, m_{5}m_{12}=m_{6}m_{11}, m_{5}m_{13}=m_{6}m_{12}, m_{5}m_{14}=m_{6}m_{13},\\
  m_{7}m_{10}=m_{8}m_{9},  m_{7}m_{12}=m_{8}m_{11}, m_{7}m_{13}=m_{8}m_{12}, m_{7}m_{14}=m_{8}m_{13}, m_{9}m_{12}=m_{10}m_{11},\\
   m_{9}m_{13}=m_{10}m_{12},
 m_{9}m_{14}=m_{10}m_{13}, m_{11}m_{13}=m_{12}^2, m_{11}m_{14}=m_{12}m_{13}, m_{12}m_{14}=m_{13}^2.
\end{array}
\end{equation*}}
Thus,  ${\widehat{\C}}_{3,2,1}$ contains 63 constraints and $N_2 = 63$.

In Step 5, eliminating the non-quadratic monomials in $F_{3,2}$
using $\C_{3,2,2}$ to obtain a quadratic form in $m_i$ and then simplifying the quadratic form using $\C_{3,2,1}$, we have
{\small
\begin{equation*}
\begin{array}{ll}
\widehat{F}_{3,2}\!\!\!\!
&=
F_{3,2}-(\frac{3}{4}\widehat{R}_{17}-\frac{1}{6}\widehat{R}_{12}-\frac{1}{6}\widehat{R}_{13}+\frac{7}{6}\widehat{R}_{18}-\frac{1}{2}\widehat{R}_{32}
-\frac{1}{2}\widehat{R}_{34}-\frac{5}{8}\widehat{R}_{35}-\frac{1}{2}\widehat{R}_{40}\\
&-\frac{1}{12}\widetilde{R}_{2}-\frac{1}{8}\widetilde{R}_{5}
-\frac{1}{4}\widetilde{R}_{6}+\frac{1}{2}\widetilde{R}_{7}
+\frac{1}{4}\widetilde{R}_{8}+\frac{1}{2}\widetilde{R}_{18}+\frac{1}{4}\widetilde{R}_{19}
-\frac{1}{8}\widetilde{R}_{39}-\frac{1}{4}\widetilde{R}_{46}+\frac{1}{2}\widetilde{R}_{48}-\frac{1}{8}\widetilde{R}_{49}+\frac{1}{4}\widetilde{R}_{53})\\
&=\frac{1}{2}m_{1}^2-m_{1}m_{5}+\frac{3}{2}m_{2}^2-3m_{2}m_{6}+\frac{3}{2}m_{3}^2+\frac{1}{2}m_{4}^2-2m_{4}m_{6}-m_{4}m_{7}
-m_{4}m_{10}-\frac{1}{2}m_{5}^2+\frac{3}{2}m_{6}^2-3m_{7}^2\\
&
\quad -2m_{7}m_{10}+3m_{8}^2-\frac{5}{2}m_{9}^2-\frac{3}{2}m_{9}m_{11}+21m_{9}m_{13}-\frac{1}{2}m_{10}^2
+\frac{3}{5}m_{11}^2+3m_{12}^2-15m_{13}^2+\frac{3}{5}m_{14}^2.
\end{array}
\end{equation*}}

In Step 6, using the Matlab program in Appendix B
with $\widehat{F}_{3,2}$ and $\widehat{\C}_{3,2,1}$ as input,
we  find an SOS representation \eqref{eq-tt3} for $\widehat{F}_{3,2}$. Thus, $D(3,2)$ is proved.
The Maple  program to prove $D(3,2)$ can be found in
\begin{verbatim}
http://www.mmrc.iss.ac.cn/~xgao/software/maple-epid32.zip
\end{verbatim}

\subsubsection{Proof of $D(3,3)\geq0$}
The proof follows Procedure \ref{proc-H} with  $F_{3,3}$ given in \eqref{3.1} as input.

In Step 1, from \eqref{eq-3consn},
the constraints are:
$\C_{3,3}=\{R_{i,a,b,c}^{(3)}, R_{j}^{(0)},R_{k,a,b}^{(2)},\,:\,i=1,\ldots,955, j=1,\ldots,8,k=1,\ldots,20,a,b,c\in[3]\}$. Removing the repeated ones, we have $N_1=1029$.

%

In Step 2, the new variables are $\M_{3.3}=\{m_i,i=1,\ldots,38\}$ listed   in the lexicographical monomial order.

In Step 3, we obtain   $\C_{3,3,1}$ and $\C_{3,3,2}$, which contain
350 and 328 constraints, respectively.

In Step 4, there exist 189 intrinsic constraints.
In total, ${\widehat{\C}}_{3,3,1}$ contains 539 constraints.
Using $\R$-Gaussian elimination in $\span_\R({\widehat{\C}}_{3,3,1})$
shows that 512 of these 539 constraints are linearly independent,  so $N_2 = 512$.

In Step 5,
eliminating the non-quadratic monomials in $F_{3,3}$
using $\C_{3,3,2}$ and then simplify the expression using $\C_{3,3,1}$, we have
{\small
$$\begin{array}{ll}
\widehat{F}_{3,3}\!\!\!\!&=
F_{3,3}-(-\frac{1}{8}\widetilde{R}_{30}-\frac{1}{8}\widetilde{R}_{31}-\frac{1}{8}\widetilde{R}_{32}-\frac{1}{4}\widetilde{R}_{39}
-\frac{1}{4}\widetilde{R}_{41}-\frac{1}{4}\widetilde{R}_{42}+\frac{1}{4}\widetilde{R}_{43}+\frac{1}{4}\widetilde{R}_{44}+\frac{1}{4}\widetilde{R}_{45}
+\frac{1}{2}\widetilde{R}_{65}+\frac{1}{2}\widetilde{R}_{66}+\frac{1}{4}\widetilde{R}_{67}\\&
+\frac{1}{2}\widetilde{R}_{69}+\frac{1}{4}\widetilde{R}_{71}
+\frac{1}{4}\widetilde{R}_{72}-\frac{1}{4}\widetilde{R}_{78}-\frac{1}{4}\widetilde{R}_{79}-\frac{1}{4}\widetilde{R}_{81}+\frac{1}{4}\widetilde{R}_{89}
+\frac{1}{4}\widetilde{R}_{90}+\frac{1}{2}\widetilde{R}_{91}+\frac{1}{4}\widetilde{R}_{93}+\frac{1}{2}\widetilde{R}_{94}+\frac{1}{2}\widetilde{R}_{96}
+\frac{1}{2}\widetilde{R}_{134}\\&
+\frac{1}{2}\widetilde{R}_{136}+\frac{1}{2}\widetilde{R}_{155}-\frac{1}{8}\widetilde{R}_{206}-\frac{1}{8}\widetilde{R}_{209}
-\frac{1}{8}\widetilde{R}_{210}-\frac{1}{4}\widetilde{R}_{215}-\frac{1}{8}\widetilde{R}_{242}-\frac{1}{8}\widetilde{R}_{243}-\frac{1}{8}\widetilde{R}_{244}
-\frac{1}{4}\widetilde{R}_{306}-\frac{1}{4}\widetilde{R}_{328}-\frac{21}{4}\widehat{R}_{313}\\&
-\frac{1}{2}\widehat{R}_{153}-\frac{1}{2}\widehat{R}_{155}
-\frac{1}{2}\widehat{R}_{161}+\frac{1}{2}\widehat{R}_{215}-\frac{1}{6}\widehat{R}_{39}-\frac{1}{6}\widehat{R}_{41}-\frac{1}{6}\widehat{R}_{42}
-\frac{1}{6}\widehat{R}_{43}-\frac{1}{6}\widehat{R}_{45}-\frac{1}{6}\widehat{R}_{46}+\frac{1}{2}\widehat{R}_{71}+\frac{1}{2}\widehat{R}_{73}
+\frac{1}{2}\widehat{R}_{77}\\&
-\frac{1}{2}\widehat{R}_{95}+\frac{5}{2}\widehat{R}_{258}-\frac{1}{2}\widehat{R}_{248}+\frac{1}{2}\widehat{R}_{265}
+\frac{1}{2}\widehat{R}_{266}+\frac{1}{2}\widehat{R}_{267}-\frac{1}{2}\widehat{R}_{280}-\frac{1}{2}\widehat{R}_{281}-\frac{1}{2}\widehat{R}_{282}
+\frac{1}{3}\widehat{R}_{283}+\frac{1}{3}\widehat{R}_{284}+\frac{1}{3}\widehat{R}_{285}\\&
-\frac{1}{2}\widehat{R}_{296}-\frac{1}{2}\widehat{R}_{303}
-\frac{1}{2}\widehat{R}_{304}-\frac{1}{2}\widehat{R}_{305}-\frac{1}{2}\widehat{R}_{307}-\frac{1}{2}\widehat{R}_{321}+\frac{1}{2}\widehat{R}_{338}
-4\widehat{R}_{346}-\frac{1}{2}\widehat{R}_{347}-\frac{1}{2}\widehat{R}_{348}-\widehat{R}_{349}-\frac{1}{2}\widehat{R}_{350}\\&
-\frac{3}{4}\widehat{R}_{175}+\frac{5}{8}\widehat{R}_{333}
)\\
&=21m_{26}m_{34}+21m_{27}m_{37}-\frac{1}{2}m_{28}^2+3m_{30}^2+3m_{31}^2-15m_{32}^2+18m_{33}^2-15m_{34}^2+3m_{36}^2
-15m_{37}^2+3m_{15}^2\\&
-2m_{7}m_{12}-m_{7}m_{14}+\frac{1}{2}m_{1}^2-m_{1}m_{11}+3m_{19}^2-3m_{14}^2-2m_{14}m_{21}-9m_{18}^2-2m_{24}m_{28}
+3m_{25}^2-m_{7}m_{21}\\&
-6m_{8}m_{13}+\frac{3}{2}m_{6}^2+3m_{16}^2+3m_{5}^2+\frac{3}{5}m_{35}^2-3m_{17}^2+\frac{3}{2}m_{3}^2-2m_{17}m_{28}
+\frac{3}{2}m_{12}^2+\frac{1}{2}m_{10}^2-2m_{10}m_{13}-m_{10}m_{17}\\&
-18m_{23}m_{33}-3m_{24}^2-3m_{2}m_{12}+3m_{23}^2-9m_{26}m_{32}+6m_{9}m_{12}
+\frac{3}{5}m_{38}^2-3m_{3}m_{13}+21m_{20}m_{32}+9m_{20}m_{34}\\&
-\frac{1}{2}m_{21}^2-2m_{10}m_{22}-m_{10}m_{24}+\frac{3}{2}m_{13}^2-3m_{8}m_{22}
-\frac{5}{2}m_{20}^2-m_{10}m_{28}-6m_{6}m_{20}-\frac{1}{2}m_{11}^2+\frac{1}{2}m_{7}^2+\frac{3}{2}m_{9}^2\\&
+\frac{3}{2}m_{4}^2+\frac{3}{5}m_{29}^2
-\frac{3}{2}m_{26}m_{29}-\frac{5}{2}m_{26}^2-\frac{5}{2}m_{27}^2+\frac{3}{2}m_{8}^2-\frac{3}{2}m_{27}m_{35}+\frac{3}{2}m_{2}^2+\frac{3}{2}m_{22}^2
-\frac{3}{2}m_{20}m_{29}.
\end{array}$$}

In Step 6, using the Matlab program in Appendix B
with $\widehat{F}_{3,3}$ and $\widehat{\C}_{3,3,1}$ as input, we  find an SOS representation \eqref{eq-tt3} for $\widehat{F}_{3,3}$.
Thus, $D(3,3)$ is proved.
%
%
The Maple  program to prove $D(3,3)$ can be found in
\begin{verbatim}
http://www.mmrc.iss.ac.cn/~xgao/software/maple-epid33.zip
\end{verbatim}

\subsubsection{Proof of $D(3,4)\geq0$}
The proof  follows Procedure \ref{proc-H} with  $F_{3,4}$ given in \eqref{3.1} as input.

In Step 1, from \eqref{eq-3consn},
$\C_{3,4}=\{R_{i,a,b,c}^{(3)}, R_{j}^{(0)},R_{k,a,b}^{(2)},\,:\,i=1,\ldots,955, j=1,\ldots,8,k=1,\ldots,20,a,b,c\in[4]\}$.
%
Removing the repeated ones, we have $N_1=3172$.

In Step 2, the new variables are $M_{3,4}=\{m_i, i=1,\ldots,80\}$
  listed  in the lexicographical monomial order.

In Step 3, we obtain  $\C_{3,4,1}$ and $\C_{3,4,2}$ which contain
1120 and 975 constraints, respectively.

In Step 4, there exist 1080 intrinsic constraints.
In total, ${\widehat{\C}}_{3,4,1}$ contains 2200 constraints.
Only 1966 constraints in ${\widehat{\C}}_{3,4,1}$ are $\R$-linearly independent,
so  $N_2 = 1966$.

In Step 5, eliminating the non-quadratic monomials in $F_{3,4}$
using $\C_{3,4,2}$ to obtain a quadratic form in $m_i$  and then simplifying the quadratic form with $\C_{3,4,1}$, we have
{\footnotesize
$$\begin{array}{ll}
&F_{3,4}-\widehat{F}_{3,4}=\\
&\frac{1}{2}\widetilde{R}_{410}+\frac{1}{2}\widetilde{R}_{411}+\frac{1}{2}\widetilde{R}_{412}+\frac{1}{2}\widetilde{R}_{413}+\frac{1}{2}\widetilde{R}_{414}
+\frac{1}{2}\widetilde{R}_{415}+\frac{1}{2}\widetilde{R}_{416}+\frac{1}{2}\widetilde{R}_{417}+\frac{1}{2}\widetilde{R}_{147}+\frac{1}{4}\widetilde{R}_{148}
+\frac{1}{2}\widetilde{R}_{150}+\frac{1}{2}\widetilde{R}_{153}+\frac{1}{4}\widetilde{R}_{155}\\&
+\frac{1}{4}\widetilde{R}_{156}+\frac{1}{2}\widetilde{R}_{159}
+\frac{1}{4}\widetilde{R}_{163}+\frac{1}{4}\widetilde{R}_{164}+\frac{1}{4}\widetilde{R}_{165}-\frac{1}{4}\widetilde{R}_{180}-\frac{1}{4}\widetilde{R}_{181}
-\frac{1}{4}\widetilde{R}_{183}-\frac{1}{4}\widetilde{R}_{184}-\frac{1}{4}\widetilde{R}_{186}+\frac{1}{2}\widetilde{R}_{233}-\frac{1}{4}\widetilde{R}_{101}
+\frac{1}{4}\widetilde{R}_{102}\\&
+\frac{1}{4}\widetilde{R}_{103}+\frac{1}{4}\widetilde{R}_{104}+\frac{1}{4}\widetilde{R}_{105}+\frac{1}{2}\widetilde{R}_{142}
+\frac{1}{2}\widetilde{R}_{144}-\frac{1}{8}\widetilde{R}_{76}-\frac{1}{8}\widetilde{R}_{77}-\frac{1}{8}\widetilde{R}_{78}-\frac{1}{8}\widetilde{R}_{79}
-\frac{1}{4}\widetilde{R}_{92}-\frac{1}{4}\widetilde{R}_{94}-\frac{1}{4}\widetilde{R}_{97}-\frac{1}{4}\widetilde{R}_{98}-\frac{1}{4}\widetilde{R}_{100}\\&
-\frac{1}{4}\widetilde{R}_{189}+\frac{1}{4}\widetilde{R}_{210}+\frac{1}{4}\widetilde{R}_{211}+\frac{1}{4}\widetilde{R}_{213}+\frac{1}{2}\widetilde{R}_{216}
+\frac{1}{4}\widetilde{R}_{218}+\frac{1}{4}\widetilde{R}_{220}+\frac{1}{2}\widetilde{R}_{222}+\frac{1}{2}\widetilde{R}_{224}+\frac{1}{4}\widetilde{R}_{227}
+\frac{1}{2}\widetilde{R}_{228}+\frac{1}{2}\widetilde{R}_{230}+\frac{3}{2}\widehat{R}_{262}\\&
-\frac{1}{2}\widehat{R}_{416}-\frac{1}{2}\widehat{R}_{418}
-\frac{1}{2}\widehat{R}_{420}-\frac{1}{2}\widehat{R}_{422}-\frac{1}{2}\widehat{R}_{424}-\frac{1}{2}\widehat{R}_{425}-\frac{1}{2}\widehat{R}_{426}
-\frac{1}{2}\widehat{R}_{427}-\frac{1}{2}\widehat{R}_{429}-\frac{1}{2}\widehat{R}_{430}-\frac{1}{2}\widehat{R}_{431}-\frac{1}{2}\widehat{R}_{432}
-\frac{1}{2}\widehat{R}_{437}\\&
-\frac{1}{2}\widehat{R}_{439}-\frac{1}{2}\widehat{R}_{441}-\frac{1}{2}\widehat{R}_{443}-\frac{1}{2}\widehat{R}_{444}
-\frac{1}{2}\widehat{R}_{445}-\frac{1}{2}\widehat{R}_{449}-\frac{1}{2}\widehat{R}_{451}+\frac{1}{2}\widehat{R}_{186}+\frac{1}{2}\widehat{R}_{189}
+\frac{1}{2}\widehat{R}_{191}+\frac{1}{2}\widehat{R}_{201}+\frac{1}{2}\widehat{R}_{203}+\frac{1}{2}\widehat{R}_{209}\\&
-\frac{1}{2}\widehat{R}_{212}
-\frac{1}{2}\widehat{R}_{213}-\frac{1}{2}\widehat{R}_{214}-\frac{1}{2}\widehat{R}_{215}-\frac{1}{2}\widehat{R}_{224}-\frac{1}{2}\widehat{R}_{225}
-\frac{1}{2}\widehat{R}_{226}-\frac{1}{2}\widehat{R}_{227}-\frac{3}{2}\widehat{R}_{877}-\frac{5}{8}\widehat{R}_{818}-\frac{3}{2}\widehat{R}_{876}
-3\widehat{R}_{875}\frac{1}{2}-\frac{1}{4}\widetilde{R}_{972}\\&
-\frac{1}{4}\widetilde{R}_{973}-\frac{1}{4}\widetilde{R}_{974}-\frac{1}{4}\widetilde{R}_{975}
+\frac{1}{2}\widetilde{R}_{920}+\frac{1}{2}\widetilde{R}_{921}-\frac{1}{4}\widetilde{R}_{922}+\frac{1}{2}\widetilde{R}_{923}-\frac{1}{4}\widetilde{R}_{924}
-\frac{1}{4}\widetilde{R}_{925}+\frac{1}{2}\widetilde{R}_{926}-\frac{1}{4}\widetilde{R}_{927}-\frac{1}{8}\widetilde{R}_{696}-\frac{1}{8}\widetilde{R}_{697}\\&
-\frac{1}{8}\widetilde{R}_{698}-\frac{1}{8}\widetilde{R}_{699}-\frac{1}{8}\widetilde{R}_{700}-\frac{1}{8}\widetilde{R}_{701}-\frac{1}{8}\widetilde{R}_{541}
-\frac{1}{8}\widetilde{R}_{547}-\frac{1}{8}\widetilde{R}_{548}-\frac{1}{8}\widetilde{R}_{551}-\frac{1}{8}\widetilde{R}_{552}-\frac{1}{8}\widetilde{R}_{553}
-\frac{1}{4}\widetilde{R}_{561}-\frac{1}{4}\widetilde{R}_{563}-\frac{1}{4}\widetilde{R}_{568}\\&
-\frac{1}{4}\widetilde{R}_{570}-\frac{9}{2}\widehat{R}_{1009}
-\frac{9}{2}\widehat{R}_{1011}-\frac{1}{2}\widehat{R}_{1069}-\frac{1}{2}\widehat{R}_{1070}-\frac{1}{2}\widehat{R}_{1071}-\frac{1}{2}\widehat{R}_{1072}
-\frac{1}{2}\widehat{R}_{1093}-\frac{1}{2}\widehat{R}_{1094}-\frac{1}{2}\widehat{R}_{1095}-\frac{1}{2}\widehat{R}_{1096}+\frac{1}{4}\widehat{R}_{1105}\\&
+\frac{1}{4}\widehat{R}_{1106}+\frac{1}{4}\widehat{R}_{1107}+\frac{1}{4}\widehat{R}_{1111}-\frac{3}{2}\widehat{R}_{878}-\frac{1}{2}\widehat{R}_{941}
-\frac{1}{2}\widehat{R}_{942}-\frac{1}{2}\widehat{R}_{943}-\frac{1}{2}\widehat{R}_{944}-\frac{1}{2}\widehat{R}_{945}-\frac{1}{2}\widehat{R}_{946}
+\frac{1}{3}\widehat{R}_{947}+\frac{1}{3}\widehat{R}_{948}+\frac{1}{3}\widehat{R}_{949}\\&
+\frac{1}{3}\widehat{R}_{950}+\frac{1}{3}\widehat{R}_{951}
+\frac{1}{3}\widehat{R}_{952}-3\widehat{R}_{989}-3\widehat{R}_{990}-3\widehat{R}_{991}-3\widehat{R}_{995}-\frac{1}{2}\widehat{R}_{828}
-\frac{1}{2}\widehat{R}_{829}-\frac{1}{2}\widehat{R}_{830}-\frac{1}{2}\widehat{R}_{831}+\frac{1}{2}\widehat{R}_{895}+\frac{1}{2}\widehat{R}_{896}\\&
+\frac{1}{2}\widehat{R}_{897}+\frac{1}{2}\widehat{R}_{898}+\frac{1}{2}\widehat{R}_{899}+\frac{1}{2}\widehat{R}_{900}+\frac{1}{2}\widehat{R}_{670}
+\frac{1}{2}\widehat{R}_{672}+\frac{1}{2}\widehat{R}_{674}+\frac{1}{2}\widehat{R}_{675}+\frac{1}{2}\widehat{R}_{677}+\frac{1}{2}\widehat{R}_{678}
+\frac{1}{2}\widehat{R}_{685}+\frac{1}{2}\widehat{R}_{687}-\frac{5}{8}\widehat{R}_{824}\\&
+\frac{3}{2}\widehat{R}_{260}-\frac{1}{6}\widehat{R}_{91}
-\frac{1}{6}\widehat{R}_{94}-\frac{1}{6}\widehat{R}_{96}-\frac{1}{6}\widehat{R}_{97}-\frac{1}{6}\widehat{R}_{98}-\frac{1}{6}\widehat{R}_{99}
-\frac{1}{6}\widehat{R}_{103}-\frac{1}{6}\widehat{R}_{105}-\frac{1}{6}\widehat{R}_{106}-\frac{1}{6}\widehat{R}_{107}-\frac{1}{6}\widehat{R}_{109}
-\frac{1}{6}\widehat{R}_{110}+\frac{3}{2}\widehat{R}_{261}\\&
-\frac{5}{8}\widehat{R}_{814}+\frac{1}{4}\widehat{R}_{505}+\frac{1}{4}\widehat{R}_{513}
+\frac{1}{4}\widehat{R}_{532}+\frac{1}{4}\widehat{R}_{536}-\frac{1}{2}\widehat{R}_{564}-\frac{1}{2}\widehat{R}_{565}-\frac{1}{2}\widehat{R}_{566}
-\frac{1}{2}\widehat{R}_{574}-\frac{9}{2}\widehat{R}_{1012}-\frac{9}{2}\widehat{R}_{1010}+\frac{3}{2}\widehat{R}_{263}-\frac{5}{8}\widehat{R}_{813}.
\end{array}$$}
$\widehat{F}_{3,4}$ can be written as a quadratic form in $m_i$:
{\footnotesize
$$\begin{array}{ll}
&\widehat{F}_{3,4}=
-18m_{53}m_{69}+3m_{54}^2-18m_{54}m_{75}-3m_{55}^2-3m_{42}^2-2m_{55}m_{60}-6m_{10}m_{37}-6m_{10}m_{49}+3m_{56}^2
\\&
-m_{17}m_{51}+\frac{1}{2}m_{11}^2-3m_{2}m_{22}-9m_{47}^2-3m_{46}^2-18m_{41}m_{66}+3m_{6}^2-9m_{34}^2+3m_{53}^2-\frac{1}{2}m_{38}^2+3m_{45}^2
-m_{20}m_{55}\\&
-m_{20}m_{60}-2m_{20}m_{24}-m_{20}m_{33}+3m_{26}^2-2m_{17}m_{23}-m_{17}m_{29}+\frac{1}{2}m_{17}^2-6m_{16}m_{50}+6m_{16}m_{22}
+3m_{78}^2\\&
-15m_{79}^2-6m_{18}m_{40}-18m_{45}m_{67}-3m_{4}m_{24}-2m_{25}m_{38}-3m_{3}m_{23}-15m_{76}^2+18m_{75}^2-15m_{74}^2+3m_{73}^2
\\&
+3m_{72}^2+3m_{15}^2-15m_{70}^2+\frac{1}{2}m_{1}^2-m_{1}m_{21}+3m_{28}^2+3m_{41}^2-\frac{1}{2}m_{51}^2-3m_{18}m_{52}+21m_{50}m_{74}
+9m_{50}m_{76}\\&
+3m_{31}^2+3m_{48}^2-9m_{30}^2+18m_{69}^2-15m_{68}^2+3m_{36}^2-2m_{42}m_{51}+18m_{67}^2+6m_{14}m_{22}+18m_{66}^2-15m_{65}^2
\\&
+3m_{64}^2-3m_{13}m_{40}-6m_{13}m_{24}+3m_{63}^2+3m_{62}^2-3m_{12}m_{39}+21m_{59}m_{79}-\frac{1}{2}m_{60}^2-6m_{12}m_{23}\\&
-9m_{58}m_{74}+21m_{58}m_{76}-m_{11}m_{38}-9m_{57}m_{65}-m_{11}m_{25}-9m_{57}m_{68}+21m_{57}m_{70}-2m_{11}m_{22}+\frac{3}{2}m_{23}^2\\&
+\frac{3}{2}m_{2}^2-m_{17}m_{42}-2m_{46}m_{60}+\frac{1}{2}m_{20}^2-m_{20}m_{46}-2m_{20}m_{52}-2m_{29}m_{51}+6m_{19}m_{23}+3m_{27}^2-2m_{33}m_{60}\\&
-3m_{33}^2-2m_{17}m_{39}+6m_{19}m_{39}+21m_{37}m_{65}+9m_{37}m_{68}-3m_{25}^2+3m_{43}^2+3m_{9}^2+3m_{32}^2-6m_{8}m_{37}\\&
-6m_{18}m_{24}-9m_{49}m_{65}+21m_{49}m_{68}+9m_{37}m_{70}-\frac{1}{2}m_{21}^2-9m_{35}^2+3m_{7}^2-2m_{20}m_{40}+9m_{49}m_{70}-3m_{29}^2+3m_{44}^2\\&
+\frac{3}{2}m_{18}^2+\frac{3}{2}m_{52}^2+\frac{3}{2}m_{13}^2+\frac{3}{2}m_{12}^2+\frac{3}{2}m_{14}^2+\frac{3}{2}m_{5}^2-\frac{3}{2}m_{37}m_{61}+\frac{3}{2}m_{39}^2
-\frac{5}{2}m_{37}^2+\frac{3}{2}m_{22}^2+\frac{3}{2}m_{4}^2+\frac{3}{2}m_{19}^2+\frac{3}{2}m_{8}^2\\&
+\frac{3}{2}m_{16}^2+\frac{3}{2}m_{24}^2+\frac{3}{2}m_{40}^2-\frac{5}{2}m_{50}^2-\frac{5}{2}m_{49}^2+\frac{3}{2}m_{10}^2-\frac{3}{2}m_{57}m_{61}-\frac{3}{2}m_{58}m_{71}
-\frac{5}{2}m_{57}^2-\frac{5}{2}m_{58}^2-\frac{3}{2}m_{59}m_{77}-\frac{5}{2}m_{59}^2\\&
+\frac{3}{5}m_{61}^2+\frac{3}{5}m_{71}^2
+\frac{3}{5}m_{77}^2+\frac{3}{5}m_{80}^2+\frac{3}{2}m_{3}^2-\frac{3}{2}m_{49}m_{61}-\frac{3}{2}m_{50}m_{71}.
\end{array}$$}

In Step 6, using the Matlab program in Appendix B
with $\widehat{F}_{3,4}$ and $\widehat{\C}_{3,4,1}$ as input, we find an SOS representation \eqref{eq-tt3} for $\widehat{F}_{3,4}$.
Thus, $D(3,4)$ is proved.
%
The Maple  program to prove $D(3,4)$ can be found in
\begin{verbatim}
http://www.mmrc.iss.ac.cn/~xgao/software/maple-epid34.zip
\end{verbatim}

%

\section{Conclusion and discussion}
\label{sec-conc}

We observe that the proofs of Costa's EPI
and McKean's conjecture $D(m,n): (-1)^{m+1}\frac{\d}{\d t}H(X_t)\ge0$
can be reduced to the problem of writing a differentially homogenous
polynomial in $p_t$ and its derivatives as an SOS under certain
constraints, which can be solved with SDP effectively.
Based on this observation and previous work, we propose
a procedure to prove Costa's EPI and $D(m,n)$ automatically using a Matlab program for SDP.
%
%
The procedure is not theoretically grantee to succeed,
but when it succeeds, an explicit and strict proof is given.

Using this procedure, we give  new proofs for
all known results about Costa's EPI: $\frac{\d^2}{\d^2 t} N(X_t)\le0$
and $D(2,n)$ for any $n$,
$D(3,1)$, and $D(4,1)$ (see Appendix D) .
We also prove some new results: $D(3,n)$ for $n=2,3,4$.

The procedure   has limitations for more complicated problems,
as will be shown below.

For $D(5,1)$,
$\C_{5,1}$ contains 52 constraints,
$M_{5,1}=\{m_i\,:\, i=1,\ldots,7\}$,
there exist 3 intrinsic constraints,
$\widehat{\C}_{5,1,1}$ contains 16 constraints,
and $\widehat{F}_{5,1}$ has 14 terms.
Running the Matlab program in Appendix B with input $\widehat{F}_{5,1}$ and $\widehat{\C}_{5,1,1}$,
the program terminates but does not give an answer.

For $D(4,2)$, $\C_{4,2}$ contains  771 constraints,
$M_{4,2}=\{m_i\,:\, i=1,\ldots,80\}$,
there exist 182 intrinsic constraints,
$\widehat{\C}_{4,2,1}$ contains 417 constraints,
and $\widehat{F}_{4,2}$ has 70 terms.
Running the Matlab program in Appendix B with input $\widehat{F}_{4,2}$ and $\widehat{\C}_{4,2,1}$,
the program terminates but does not give an answer.

For $D(3,5)$, $\C_{3,5}$ contains  7168 constraints,
$M_{3,5}=\{m_i\,:\, i=1,\ldots,145\}$,
there exist 4125 intrinsic constraints.
Using $\R$-Gaussian elimination to  $\C_{3,5,1}$ in Maple on a PC with a 3.40GHz CPU and 16G memory, the program fails due to  ``memory overflow".
%

There are two types of reasons for the failure. For $D(3,5)$, the SDP problem is too large for the
program and computer.
For $D(4,2)$ and $D(5,1)$, the SDP program terminates,
but fails to find an  SOS representation.
Since there exists no proof that the SDP algorithm  is complete
for the problem under consideration, we do not know whether
$\widehat{F}_{4,2}$ and $\widehat{F}_{5,1}$
are positive semidefinite under the constraints
$\widehat{\C}_{4,2,1}$ and $\widehat{\C}_{5,1,1}$, respectively.
Based on our extensive experience with the Matlab program for SDP,
we strongly believe that there exist no SOS representations \eqref{eq-tt3}
for $\widehat{F}_{4,2}$/$\widehat{\C}_{4,2,1}$ and $\widehat{F}_{5,1}$/$\widehat{\C}_{5,1,1}$.
Furthermore, $\int\frac{\widehat{F}_{4,2}}{p_t^7} dx_t\le0$
and $\int\frac{\widehat{F}_{5,2}}{p_t^9} dx_t\ge0$
could be true  even there does not exist SOS representations for $\widehat{F}_{4,2}$ and $\widehat{F}_{5,1}$.
%
%
To enhance the power of Procedure \ref{proc-H},
a major problem for further study is to find more constraints besides those given in Lemmas \ref{lm-cons1}, \ref{lm-cons2}, \ref{lm-cons3}.

\section*{Acknowledgments}

This work is partially
supported by  NSFC 11688101, Beijing Natural Science Foundation (No. Z190004)
and China Postdoctoral Science Foundation (No. 2019TQ0343, 2019M660830).
We thank Dr. Bo Bai for informing us the current progress about Costa's EPI in \cite{Cheng2015}.

\section*{Appendix A. Proofs of Lemma \ref{lemma4} and Lemma \ref{lemma2}}

{\bf  Proof of Lemma \ref{lemma4}.} We need to prove

\begin{equation}\label{A1}
\begin{array}{ll}
\displaystyle{\int_{-\infty}^{\infty}\ldots\int_{-\infty}^{\infty}p_t\left[\prod\limits_{i=1}^{r}
\frac{[p_t^{(m_i)}]^{k_i}}{p_t^{k_i}}\right]\Bigg|_{x_{a,t}=-\infty}^{\infty}\d x_t^{(a)}}=0,\\[0.5cm]
\end{array}
\end{equation}
where $a,r,m_i,k_i \in \N_{>0}$, $p_t^{(m_i)}=\frac{\partial^{m_i}p_t}{\partial x_{1,t}^{j_1}\ldots\partial x_{n,t}^{j_n}}$, $j_1+\ldots+j_n=m_i$.
It suffices to prove that
\begin{equation}\label{A2}
\begin{array}{ll}
\displaystyle{\int_{-\infty}^{\infty}\ldots\int_{-\infty}^{\infty}\lim\limits_{x_{a,t}\rightarrow\infty}p_t\left[\prod\limits_{i=1}^{r}
\frac{[p_t^{(m_i)}]^{k_i}}{p_t^{k_i}}\right]\d x_t^{(a)}}=0.
\end{array}
\end{equation}

Assume that the limit in the integrand exists, according to Fatou's lemma, the absolute value of expression \eqref{A2} satisfies the sequence of relations
\begin{equation}\label{A4}
\begin{array}{ll}
\left|\displaystyle{\int_{-\infty}^{\infty}\ldots\int_{-\infty}^{\infty}\lim\limits_{x_{a,t}\rightarrow\infty}p_t\left[\prod\limits_{i=1}^{r}
\frac{[p_t^{(m_i)}]^{k_i}}{p_t^{k_i}}\right]\d x_t^{(a)}}\right|\\[0.4cm]
\leq\displaystyle{\int_{-\infty}^{\infty}\ldots\int_{-\infty}^{\infty}\lim\limits_{x_{a,t}\rightarrow\infty}\left|p_t\left[\prod\limits_{i=1}^{r}
\frac{[p_t^{(m_i)}]^{k_i}}{p_t^{k_i}}\right]\right|\d x_t^{(a)}}\\[0.4cm]
=\displaystyle{\int_{-\infty}^{\infty}\ldots\int_{-\infty}^{\infty}\liminf\limits_{x_{a,t}\rightarrow\infty}\left|p_t\left[\prod\limits_{i=1}^{r}
\frac{[p_t^{(m_i)}]^{k_i}}{p_t^{k_i}}\right]\right|\d x_t^{(a)}}\\[0.4cm]
\leq\displaystyle{\liminf\limits_{x_{a,t}\rightarrow\infty}\int_{-\infty}^{\infty}\ldots\int_{-\infty}^{\infty}\left|p_t\left[\prod\limits_{i=1}^{r}
\left(\frac{p_t^{(m_i)}}{p_t}\right)^{k_i}\right]\right|\d x_t^{(a)}}
\end{array}
\end{equation}

By simple computation, we find that
\begin{equation}\label{A5}
\begin{array}{ll}
\displaystyle{\int_{-\infty}^{\infty}\int_{-\infty}^{\infty}\ldots\int_{-\infty}^{\infty}\left|p_t\left[\prod\limits_{i=1}^{r}
\left(\frac{p_t^{(m_i)}}{p_t}\right)^{k_i}\right]\right|\d x_t}\\[0.3cm]
=\displaystyle{\int_{\mathbf{R}^n}p_t\left[\prod\limits_{i=1}^{r}
\left|\frac{p_t^{(m_i)}}{p_t}\right|^{k_i}\right]\d x_t}\\[0.3cm]
=\mathbb{E}\left\{\prod\limits_{i=1}^{r}\left|\frac{p_t^{(m_i)}}{p_t}\right|^{k_i}\right\}.
\end{array}
\end{equation}
We have
\begin{equation}\label{A6}
\begin{array}{ll}
&\left|\frac{p_t^{(m_i)}}{p_t}\right|\\[0.4cm]
&\leq\displaystyle{\frac{1}{(2\pi t)^{n/2}}\int_{\mathbf{R}^n}\frac{p(x)}{p_t}\exp\left(-\frac{\|x_t-x\|^2}{2t}\right)
\left|\sum\limits_{j_1+\ldots+j_n=0}^{m_i}\left(\prod\limits_{q=1}^{n}a_q(x_{q,t}-x_{q})^{j_q}\right) \right|dx}\\[0.4cm]
&\leq\mathbb{E}\left[\left.\sum\limits_{j_1+\ldots+j_n=0}^{m_i}
\left(\prod\limits_{q=1}^{n}|a_q|\left|X_{q,t}-X_{q}\right|^{j_q}\right)\right|{X_t=x_t}\right]\\[0.4cm]
&=\sum\limits_{j_1+\ldots+j_n=0}^{m_i}\mathbb{E}
\left[\left.\left(\prod\limits_{q=1}^{n}|a_q|\left|Z_{q,t}\right|^{j_q}\right)\right|{X_t=x_t}\right],\\[0.4cm]
\end{array}
\end{equation}
where $a_q$'s in above formulaes are some constants that also depend on $t$.

Because  $Z_t$ is a Gaussian random vector with i.i.d. components having meaning zero and variance $t$, that is, $Z_t\thicksim N_n(0,tI)$, using Jensen's inequality, Cauchy-Schwartz inequality and inductive method can give the following result
\begin{equation}\label{A7a}
\begin{array}{ll}
\sum\limits_{j_1+\ldots+j_n=0}^{m_i}\mathbb{E}
\left[\left.\left(\prod\limits_{q=1}^{n}|a_q|\left|Z_{q,t}\right|^{j_q}\right)\right|{X_t=x_t}\right]
<\infty,
\end{array}\end{equation}
and then
\begin{equation}\label{A7b}
\begin{array}{ll}
\mathbb{E}\left\{\prod\limits_{i=1}^{r}\left|\frac{p_t^{(m_i)}}{p_t}\right|^{k_i}\right\}
<\infty.
\end{array}\end{equation}

Therefore, the $n$-dimensional integral in \eqref{A5} is finite. Then we can get the result
$$
\displaystyle{\liminf\limits_{x_{a,t}\rightarrow\infty}\int_{-\infty}^{\infty}\ldots\int_{-\infty}^{\infty}\left|p_t\left[\prod\limits_{i=1}^{r}
\left(\frac{p_t^{(m_i)}}{p_t}\right)^{k_i}\right]\right|\d x_t^{(a)}}=0,
$$
and the lemma is proved.

{\bf Proof of Lemma \ref{lemma2}}.
If $m=2p$, one can show that $\nabla^{(m)}p_t$ has the following form
\begin{equation*}
\begin{array}{ll}
\nabla^{(m)}p_t=&\sum\limits_{q_1,q_2,\ldots,q_{p}=0}^{1}a_{q_1,q_2,\ldots,q_{p}}\sum\limits_{i_1,i_2,\ldots,i_{p}=1}^{n}\displaystyle{\int_{\mathbf{R}^n}\prod\limits_{j=1}^{p}
[(x_{i_j,t}-x_{i_j})^2]^{q_j}}\\
&\cdot\frac{p(x)}{(2\pi t)^{n/2}}{\rm exp}(-\frac{\parallel x_t-x\parallel^2}{2t})\d x_t,
\end{array}
\end{equation*}
and if $m=2p+1$,
\begin{equation*}
\begin{array}{ll}
\parallel\nabla^{(m)}p_t\parallel=&\left\{\sum\limits_{i_{p+1}=1}^{n}\left[\sum\limits_{q_1,q_2,\ldots,q_{p}=0}^{1}a_{q_1,q_2,\ldots,q_{p}}
\sum\limits_{i_1,i_2,\ldots,i_{p}=1}^{n}\displaystyle{\int_{\mathbf{R}^n}\prod\limits_{j=1}^{p}
[(x_{i_j,t}-x_{i_j})^2]^{q_j}(x_{i_{p+1},t}-x_{i_{p+1}})}\right.\right.\\
&\left.\left.\cdot\frac{p(x)}{(2\pi t)^{n/2}}{\rm exp}(-\frac{\parallel x_t-x\parallel^2}{2t})\d x_t\right]^2\right\}^{1/2},
\end{array}
\end{equation*}
where $a_{q_1,q_2,\ldots,q_{p+1}}$'s and $a_{q_1,q_2,\ldots,q_{p}}$'s in above formulaes are some constants that also depend on $t$.


Because  $Z_t$ is a Gaussian random vector with i.i.d. components having meaning zero and variance $t$, using Jensen's inequality, Cauchy-Schwartz inequality and inductive method can give the following results.

If $m=2p$, we have
\begin{equation}\label{B1}
\begin{array}{ll}
\frac{\nabla^{(m)}p_t}{p_t}&=\sum\limits_{q_1,q_2,\ldots,q_{p}=0}^{1}a_{q_1,q_2,\ldots,q_{p}}\sum\limits_{i_1,i_2,\ldots,i_{p}=1}^{n}\displaystyle{\int_{\mathbf{R}^n}\prod\limits_{j=1}^{p}
[(x_{i_j,t}-x_{i_j})^2]^{q_j}}\\
&\cdot\frac{p(x)}{(2\pi t)^{n/2}p_t}{\rm exp}(-\frac{\parallel x_t-x\parallel^2}{2t})\d x_t\\
&=\sum\limits_{q_1,q_2,\ldots,q_{p}=0}^{1}a_{q_1,q_2,\ldots,q_{p}}\sum\limits_{i_1,i_2,\ldots,i_{p}=1}^{n}E\left(\left.\prod\limits_{j=1}^{p}
[(X_{i_j,t}-X_{i_j})^2]^{q_j}\right|X_t=x_t\right)\\
&=\sum\limits_{q_1,q_2,\ldots,q_{p}=0}^{1}a_{q_1,q_2,\ldots,q_{p}}\sum\limits_{i_1,i_2,\ldots,i_{p}=1}^{n}E\left(\prod\limits_{j=1}^{p}
(Z_{i_j,t})^{2q_j}\right)\\
&<\infty,
\end{array}\end{equation}

if $m=2p+1$, we have
\begin{equation}\label{B2}
\begin{array}{ll}
\frac{\nabla^{(m)}p_t}{p_t}&=\left\{\sum\limits_{i_{p+1}=1}^{n}\left[\sum\limits_{q_1,q_2,\ldots,q_{p}=0}^{1}a_{q_1,q_2,\ldots,q_{p}}\sum\limits_{i_1,i_2,\ldots,i_{p}=1}^{n}\displaystyle{\int_{\mathbf{R}^n}\prod\limits_{j=1}^{p}
[(x_{i_j,t}-x_{i_j})^2]^{q_j}(x_{i_{p+1},t}-x_{i_{p+1}})}\right.\right.\\
&\left.\left.\cdot\frac{p(x)}{(2\pi t)^{n/2}p_t}{\rm exp}(-\frac{\parallel x_t-x\parallel^2}{2t})\d x_t\right]^2\right\}^{1/2}\\
&=\left\{\sum\limits_{i_{p+1}=1}^{n}\left[\sum\limits_{q_1,q_2,\ldots,q_{p}=0}^{1}a_{q_1,q_2,\ldots,q_{p}}\sum\limits_{i_1,i_2,\ldots,i_{p}=1}^{n}
E\left(\prod\limits_{j=1}^{p}[(X_{i_j,t}-X_{i_j})^2]^{q_j}\right.\right.\right.\\
&\left.\left.\left.\left.\cdot(X_{i_{p+1},t}-X_{i_{p+1}})\right|X_t=x_t\right)\right]^2\right\}^{1/2}\\
&=\left\{\sum\limits_{i_{p+1}=1}^{n}\left[\sum\limits_{q_1,q_2,\ldots,q_{p}=0}^{1}a_{q_1,q_2,\ldots,q_{p}}\sum\limits_{i_1,i_2,\ldots,i_{p}=1}^{n}
E\left(\prod\limits_{j=1}^{p}(Z_{i_j,t})^{2q_j}(Z_{i_{p+1},t})\right)\right]^2\right\}^{1/2}\\
&<\infty.
\end{array}
\end{equation}

Thus, we have
\begin{equation}\label{B3}
\begin{array}{ll}
\left|\displaystyle{\int_{0}^{\infty}\int_{S_r}\prod\limits_{i=1}^{s}\frac{[\nabla^{(m_i)}p_t]^{k_i}}{p_t^{k_i}}\nabla^{(m_{s+1})} p_t\cdot {\rm d}\mathbf{s_r}{\rm d}r}\right|\\
\leq \displaystyle{\int_{\mathbf{R}^n}\prod\limits_{i=1}^{s}\left|\frac{\nabla^{(m_i)}p_t}{p_t}\right|^{k_i}\parallel\nabla^{(m_{s+1})} p_t\parallel\parallel{\rm d}\mathbf{s_r}\parallel{\rm d}r}\\
=\displaystyle{\int_{\mathbf{R}^n}p_t\prod\limits_{i=1}^{s}\left|\frac{\nabla^{(m_i)}p_t}{p_t}\right|^{k_i}\frac{\parallel\nabla^{(m_{s+1})} p_t\parallel}{p_t}{\rm d}x_t}\\
=E\left(\prod\limits_{i=1}^{s}\left|\frac{\nabla^{(m_i)}p_t}{p_t}\right|^{k_i}\frac{\parallel\nabla^{(m_{s+1})} p_t\parallel}{p_t}\right)\\
<\infty.
\end{array}
\end{equation}
The finiteness of \eqref{B3} is obtained by induction from the finiteness of \eqref{B1}\eqref{B2} and the Cauchy-Schwartz inequality.

\section*{Appendix B. Sum of square of quadratic forms based on SDP}
\label{app-b}

Let $f(x_1,\ldots,x_n)\ \textrm{and}\ g_i(x_1,\ldots,x_n)\in \R[x_1,\ldots,x_n],\ i=1,\ldots, r$, be quadratic homogenous polynomials in variables $x_1,\ldots,x_n$, or simply quadratic forms.
In this appendix, we will show how to compute  $p_i\in\R$ such that
\begin{equation}
\label{eq-sos1}
f(x_1,\ldots,x_n)-\sum_{i=1}^r p_i  g_i(x_1,\ldots,x_n) = S
\end{equation}
where $S\ge0$ is a sum of squares (SOS) of linear forms in $x_1,\ldots,x_n$.

A polynomial $f$ in $\R[x_1,\ldots,x_n]$ is called {\em positive semidefinite}
and is denoted as $f\succeq0$,
if $\forall \tilde{x}_i\in \R, f(\tilde{x}_1,\ldots,\tilde{x}_n)\ge0$.
The following  known result shows that  a quadratic form $f\succeq0$ if and only if $f$ is an SOS.

{\noindent\bf Lemma B}.
Let $f\in \Q[x_1,\ldots,x_n]$  be a quadratic form. Then $f\succeq0$ if and only if
\begin{equation}\label{2.1t}
f=\sum_{i=1}^n c_i (\sum_{j=i}^n e_{i,j} x_j)^2,
\end{equation}
where $c_i,e_{i,j}\in\Q$, $c_i\ge0$, and $e_{i,i}\ne0$ if $c_i\ne0$,  for $i=1,\ldots,n$ and $j=i,\ldots,n$.
\begin{proof}
We give a proof, since we do not find it in the literature.
Suppose  $f=\sum_{i=1}^na_{i,i} x_i^2 + 2\sum_{1\le i<j\le n} a_{i,j}x_ix_j\succeq0$.
Without loss of generality, we assume $f\ne0$.
Since $f\succeq0$, there exists an $i$, say $i=1$, such that $a_{1,1}\ne0$.
Then, $f= \frac{1}{a_{1,1}}(\sum_{i=1}^n a_{1,i}x_i)^2 + f_2$, where $f_2\in\Q[x_2,\ldots,x_n]$.
Repeat the above procedure for $f_2$, we obtain a formula like (\ref{2.1t}).
Since $e_{i,i}\ne0$, the linear forms $\sum_{j=i}^n e_{i,j} x_j$ are linear independent to each other.
If some $c_k<0$ then $f$ can take negative values, contradicting to $f\succeq0$. The lemma is proved.
\end{proof}

Based on Lemma B, problem \eqref{eq-sos1} is equivalent to the following problem.
%
\begin{equation}
\label{eq-sdp2}
\exists (p_1,\ldots,p_r)\in\R^n,\,
 f(x_1,\ldots,x_n)-\sum_{i=1}^r p_ig_i(x_1,\ldots,x_n)\succeq0.
\end{equation}%

Problem (\ref{eq-sdp2}) can be solved with semidivine programming (SDP).
For details of SDP, please refer to~\cite{Boyd1,Boyd2}.
%
%
A symmetric matrix $\mathcal{M}\in\R^{n\times n}$ is called {\em positive semidefinite}
and is denoted as $\mathcal{M}\succeq0$, if all of its eigenvalues are nonnegative.
Rewrite
$$
f(x_1,\ldots,x_n)=\textbf{x}C\textbf{x}^T,\ \ g_i(x_1,\ldots,x_n)=\textbf{x}A_i\textbf{x}^T,\ i=1,\ldots,r,
$$
where $\textbf{x}=(x_1,\ldots,x_n)$, $C$ and $A_i$ are $n\times n$ real symmetric matrices.
Then, problem (\ref{eq-sdp2}) is equivalent to the following SDP problem:
\begin{equation}
\label{eq-sdp3}
\begin{array}{ll}
\textrm{min}_{l_i\in\R,i=1,\ldots,r}\ \ 0\\
\textrm{subject\ to\ }C-\sum\limits_{i=1}^{r}p_iA_i \succeq0.
\end{array}\end{equation}
The dual of problem (\ref{eq-sdp3}) is
\begin{equation}\begin{array}{ll}
\label{eq-sdp4}
\textrm{min}_{\in\R^{n\times n}}\ \ &\langle\lambda,C\rangle\\
\textrm{subject\ to\ }&\langle\lambda,A_i\rangle=0,\ i=1,2,\ldots,r,\\
&\lambda\succeq0.
\end{array}\end{equation}
where $\lambda$ is a symmetric matrix and $\langle\cdot\rangle$ is the inner product by treating matrices as vectors.

Problem (\ref{eq-sdp4}) can be solved with the following Matlab program
which computes $\hbox{L}=\lambda$ and $P=(p_1,\ldots,p_r)$
with $C$ and $A_i,i=1\ldots,r$ as the input.
This program uses the CVX package in Matlab~\cite{grant2008} to solve SDPs.
\lstset{language=Matlab}
\begin{lstlisting}
cvx_begin
    variable L(n,n) symmetric
    dual variable P
    minimize(trace(C*X))
    subject to
    [trace(A_1*L),trace(A_2*L),...,trace(A_r*L)]'== zeros(r,1):P;
    Lambda == semidefinite(n);
cvx_end
\end{lstlisting}
After $P$ is obtained, it is easy to find the SOS representation \eqref{eq-sos1}
following the proof of Lemma B.
%
%
%

\section*{Appendix C. Constraints in Lemma \ref{lem-42}}
The 17   constraints in Lemma \ref{lem-42} are given below.
For simplicity, we let $R_i = R^{(2)}_{i,a,b}$.

{\footnotesize\parskip=0pt\parindent=2pt
$R_1=p_t^2(x_t)\frac{\partial^3p_t(x_t)}{\partial^3 x_{b,t}}\frac{\partial p_t(x_t)}{\partial x_{b,t}}+\frac{\partial^2p_t(x_t)}{\partial^2x_{b,t}} [p_t^2(x_t)\frac{\partial^2p_t(x_t)}{\partial^2x_{b,t}}-p_t(x_t) (\frac{\partial p_t(x_t)}{\partial x_{b,t}} )^2 ].$

$R_2=p_t^2(x_t)\frac{\partial^3p_t(x_t)}{\partial^3 x_{b,t}}\frac{\partial p_t(x_t)}{\partial x_{a,t}}+\frac{\partial^2p_t(x_t)}{\partial^2x_{b,t}} [p_t^2(x_t)\frac{\partial^2p_t(x_t)}{\partial x_{a,t}\partial x_{b,t}}-p_t(x_t)\frac{\partial p_t(x_t)}{\partial x_{a,t}}\frac{\partial p_t(x_t)}{\partial x_{b,t}} ].$

$R_3=p_t^2(x_t)\frac{\partial^3p_t(x_t)}{\partial x_{a,t}\partial^2x_{b,t}}\frac{\partial p_t(x_t)}{\partial x_{b,t}}+\frac{\partial^2p_t(x_t)}{\partial x_{a,t}\partial x_{b,t}} [p_t^2(x_t)\frac{\partial^2p_t(x_t)}{\partial^2x_{b,t}}-p_t(x_t) (\frac{\partial p_t(x_t)}{\partial x_{b,t}} )^2 ].$

$R_4=p_t^2(x_t)\frac{\partial^3p_t(x_t)}{\partial x_{a,t}\partial^2x_{b,t}}\frac{\partial p_t(x_t)}{\partial x_{a,t}}+\frac{\partial^2p_t(x_t)}{\partial x_{a,t}\partial x_{b,t}} [\frac{1}{p_t(x_t)}\frac{\partial^2p_t(x_t)}{\partial x_{a,t}\partial x_{b,t}}-p_t(x_t)\frac{\partial p_t(x_t)}{\partial x_{a,t}}\frac{\partial p_t(x_t)}{\partial x_{b,t}} ].$

$R_5=p_t^2(x_t)\frac{\partial^3p_t(x_t)}{\partial^2 x_{a,t}\partial x_{b,t}}\frac{\partial p_t(x_t)}{\partial x_{b,t}}+\frac{\partial^2p_t(x_t)}{\partial x_{a,t}\partial x_{b,t}} [p_t^2(x_t)\frac{\partial^2p_t(x_t)}{\partial x_{a,t}\partial x_{b,t}}-p_t(x_t)\frac{\partial p_t(x_t)}{\partial x_{a,t}}\frac{\partial p_t(x_t)}{\partial x_{b,t}} ].$

$R_6=p_t^2(x_t)\frac{\partial^3p_t(x_t)}{\partial^2 x_{a,t}\partial x_{b,t}}\frac{\partial p_t(x_t)}{\partial x_{a,t}}+\frac{\partial^2p_t(x_t)}{\partial x_{a,t}\partial x_{b,t}} [p_t^2(x_t)\frac{\partial^2p_t(x_t)}{\partial^2 x_{a,t}}-p_t(x_t) (\frac{\partial p_t(x_t)}{\partial x_{a,t}} )^2 ].$

$R_7=p_t^2(x_t)\frac{\partial^3p_t(x_t)}{\partial^3 x_{a,t}}\frac{\partial p_t(x_t)}{\partial x_{b,t}}+\frac{\partial^2p_t(x_t)}{\partial^2 x_{a,t}} [p_t^2(x_t)\frac{\partial^2p_t(x_t)}{\partial x_{a,t}\partial x_{b,t}}-p_t(x_t)\frac{\partial p_t(x_t)}{\partial x_{a,t}}\frac{\partial p_t(x_t)}{\partial x_{b,t}} ].$

$R_8=p_t^2(x_t)\frac{\partial^3p_t(x_t)}{\partial^3 x_{a,t}}\frac{\partial p_t(x_t)}{\partial x_{a,t}}+\frac{\partial^2p_t(x_t)}{\partial^2 x_{a,t}} [p_t^2(x_t)\frac{\partial^2p_t(x_t)}{\partial^2 x_{a,t}}-p_t(x_t) (\frac{\partial p_t(x_t)}{\partial x_{a,t}} )^2 ].$

$R_9=p_t(x_t)\frac{\partial^2p_t(x_t)}{\partial^2x_{b,t}} (\frac{\partial p_t(x_t)}{\partial x_{a,t}} )^2+\frac{\partial p_t(x_t)}{\partial x_{b,t}} [2p_t(x_t)\frac{\partial p_t(x_t)}{\partial x_{a,t}}\frac{\partial p_t^2(x_t)}{\partial x_{a,t}\partial x_{b,t}}-2 (\frac{\partial p_t(x_t)}{\partial x_{a,t}} )^2\frac{\partial p_t(x_t)}{\partial x_{b,t}} ].$

$R_{10}=p_t(x_t)\frac{\partial^2p_t(x_t)}{\partial^2x_{b,t}}\frac{\partial p_t(x_t)}{\partial x_{a,t}}\frac{\partial p_t(x_t)}{\partial x_{b,t}}+\frac{\partial p_t(x_t)}{\partial x_{b,t}} [p_t(x_t)\frac{\partial p_t(x_t)}{\partial x_{b,t}}\frac{\partial p_t^2(x_t)}{\partial x_{a,t}\partial x_{b,t}}-2\frac{\partial p_t(x_t)}{\partial x_{a,t}} (\frac{\partial p_t(x_t)}{\partial x_{b,t}} )^2$
%
$+p_t(x_t)\frac{\partial p_t(x_t)}{\partial x_{a,t}}\frac{\partial p_t^2(x_t)}{\partial^2x_{b,t}} ].$

$R_{11}=p_t(x_t)\frac{\partial^2p_t(x_t)}{\partial^2x_{b,t}} (\frac{\partial p_t(x_t)}{\partial x_{b,t}} )^2+\frac{\partial p_t(x_t)}{\partial x_{b,t}} [2p_t(x_t)\frac{\partial p_t(x_t)}{\partial x_{b,t}}\frac{\partial p_t^2(x_t)}{\partial^2x_{b,t}}-2 (\frac{\partial p_t(x_t)}{\partial x_{b,t}} )^3 ].$

$R_{12}=p_t(x_t)\frac{\partial p_t(x_t)}{\partial x_{a,t}\partial x_{b,t}} (\frac{\partial p_t(x_t)}{\partial x_{a,t}} )^2+\frac{\partial p_t(x_t)}{\partial x_{b,t}} [2p_t(x_t)\frac{\partial p_t(x_t)}{\partial x_{a,t}}\frac{\partial p_t^2(x_t)}{\partial^2 x_{a,t}}-2 (\frac{\partial p_t(x_t)}{\partial x_{a,t}} )^3 ].$

$R_{13}=p_t(x_t)\frac{\partial^2p_t(x_t)}{\partial x_{a,t}\partial x_{b,t}}\frac{\partial p_t(x_t)}{\partial x_{a,t}}\frac{\partial p_t(x_t)}{\partial x_{b,t}}+\frac{\partial p_t(x_t)}{\partial x_{b,t}} [p_t(x_t)\frac{\partial p_t^2(x_t)}{\partial^2 x_{a,t}}\frac{\partial p_t(x_t)}{\partial x_{b,t}}-2 (\frac{\partial p_t(x_t)}{\partial x_{a,t}} )^2\frac{\partial p_t(x_t)}{\partial x_{b,t}}$
%
$ +p_t(x_t)\frac{\partial p_t(x_t)}{\partial x_{a,t}}\frac{\partial^2 p_t(x_t)}{\partial x_{a,t}\partial x_{b,t}} ].$

$R_{14}=p_t(x_t)\frac{\partial^2 p_t(x_t)}{\partial x_{a,t}\partial x_{b,t}} (\frac{\partial p_t(x_t)}{\partial x_{b,t}} )^2+\frac{\partial p_t(x_t)}{\partial x_{b,t}} [2p_t(x_t)\frac{\partial p_t^2(x_t)}{\partial x_{a,t}\partial x_{b,t}}\frac{\partial p_t(x_t)}{\partial x_{b,t}}-2\frac{\partial p_t(x_t)}{\partial x_{a,t}} (\frac{\partial p_t(x_t)}{\partial x_{b,t}} )^2 ].$

$R_{15}=p_t(x_t)\frac{\partial^2 p_t(x_t)}{\partial^2 x_{a,t}} (\frac{\partial p_t(x_t)}{\partial x_{a,t}} )^2+\frac{\partial p_t(x_t)}{\partial x_{a,t}} [2p_t(x_t)\frac{\partial p_t(x_t)}{\partial x_{a,t}}\frac{\partial^2 p_t(x_t)}{\partial^2 x_{a,t}}-2 (\frac{\partial p_t(x_t)}{\partial x_{a,t}} )^3 ].$

$R_{16}=p_t(x_t)\frac{\partial^2p_t(x_t)}{\partial^2 x_{a,t}}\frac{\partial p_t(x_t)}{\partial x_{a,t}}\frac{\partial p_t(x_t)}{\partial x_{b,t}}+\frac{\partial p_t(x_t)}{\partial x_{a,t}} [p_t(x_t)\frac{\partial p_t^2(x_t)}{\partial^2 x_{a,t}}\frac{\partial p_t(x_t)}{\partial x_{b,t}}-2 (\frac{\partial p_t(x_t)}{\partial x_{a,t}} )^2\frac{\partial p_t(x_t)}{\partial x_{b,t}}$
%
$+p_t(x_t)\frac{\partial p_t(x_t)}{\partial x_{a,t}}\frac{\partial^2 p_t(x_t)}{\partial x_{a,t}\partial x_{b,t}} ].$

$R_{17}=p_t(x_t)\frac{\partial^2 p_t(x_t)}{\partial^2 x_{a,t}} (\frac{\partial p_t(x_t)}{\partial x_{b,t}} )^2+\frac{\partial p_t(x_t)}{\partial x_{a,t}} [2p_t(x_t)\frac{\partial^2 p_t(x_t)}{\partial x_{a,t}\partial x_{b,t}}\frac{\partial p_t(x_t)}{\partial x_{b,t}}-2\frac{\partial p_t(x_t)}{\partial x_{a,t}} (\frac{\partial p_t(x_t)}{\partial x_{b,t}} )^2 ].$
}

\section*{Appendix D. Proof of $D(4,1)$}
%
Let
$x_t = x_{1,t}, f:=f_0:=p_t,\ \ f_n:=\frac{\partial^n p_t}{\partial^n x_{1,t}},\, n\in\N_{>0}$.
We have
$$-\dfrac{\d ^4}{\d  t^4}H(X_t)
\overset{\eqref{H2}}{=}-\dfrac{1}{2}\dfrac{\d ^3}{\d  t^3}\displaystyle{\int\dfrac{f_1^2}{f}dx_t}
=-\dfrac{1}{2}\displaystyle{\int\dfrac{\d ^3}{\d  t^3}\left(\dfrac{f_1^2}{f}\right)dx_t}
\overset{\eqref{H1}}{=}\displaystyle{\int\dfrac{F_{4,1}}{f^7} dx_t}$$
where
$F_{4,1}=-\frac{3}{8}f^6f_{3}f_{5}+\frac{3}{8}f^5f_{3}^2f_{2}-\frac{3}{4}f^4f_{1}f_{3}f_{2}^2
+\frac{3}{8}f^5f_{1}f_{5}f_{2}
+\frac{3}{8}f^5f_{1}f_{3}f_{4}
-\frac{1}{8}f^6f_{1}f_{7}
+\frac{3}{8}f^3f_{1}^2f_{2}^3
-\frac{3}{8}f^4f_{1}^2f_{4}f_{2}
+\frac{1}{16}f^5f_{1}^2f_{6}.$
We can use  Procedure \ref{proc-H} to prove $D(4,1):-\frac{\d^4H(X_t)}{\d^4 t}\ge0$ with $F_{4,1}$ as the input.

In {Step} 1, $\C_{4,1}=\{R_{i}, i=1,\ldots,14\}$ using Lemma
\ref{lm-cons1}, where
{\small
\begin{equation*}
\begin{array}{ll}
R_{1} = 7ff_{1}^6f_{2}-6f_{1}^8,
& R_{2} = 3f^4f_{1}f_{2}^2f_{3}+f^4f_{2}^4-3f^3f_{1}^2f_{2}^3,\\
 R_{3} = f^6f_{1}f_{7}+f^6f_{2}f_{6}-f^5f_{1}^2f_{6},
& R_{4} = 2f^5f_{1}f_{3}f_{4}+f^5f_{2}f_{3}^2-2f^4f_{1}^2f_{3}^2,\\
R_{5} = f^5f_{1}^2f_{6}+2f^5f_{1}f_{2}f_{5}-2f^4f_{1}^3f_{5},
&R_{6} = 2f^3f_{1}^3f_{2}f_{3}+3f^3f_{1}^2f_{2}^3-4f^2f_{1}^4f_{2}^2,\\
R_{7} = f^4f_{1}^3f_{5}+3f^4f_{1}^2f_{2}f_{4}-3f^3f_{1}^4f_{4},
&R_{8} = f^3f_{1}^4f_{4}+4f^3f_{1}^3f_{2}f_{3}-4f^2f_{1}^5f_{3},\\
R_{9} = f^2f_{1}^5f_{3}+5f^2f_{1}^4f_{2}^2-5ff_{1}^6f_{2},
&R_{10} = f^5f_{2}^2f_{4}+2f^5f_{2}f_{3}^2-2f^4f_{1}f_{2}^2f_{3},\\
R_{11} = f^6f_{3}f_{5}+f^6f_{4}^2-f^5f_{1}f_{3}f_{4},
&R_{12} = f^5f_{1}f_{2}f_{5}+f^5f_{1}f_{3}f_{4}+f^5f_{2}^2f_{4}-2f^4f_{1}^2f_{2}f_{4},\\
R_{13} = f^6f_{2}f_{6}+f^6f_{3}f_{5}-f^5f_{1}f_{2}f_{5},
&R_{14} = f^4f_{1}^2f_{2}f_{4}+f^4f_{1}^2f_{3}^2+2f^4f_{1}f_{2}^2f_{3}-3f^3f_{1}^3f_{2}f_{3}.
\end{array}\end{equation*}

In {Step} 2,  $\M_{4,1}=\{m_{1} = f^3f_{4}, m_{2} = f^2f_{3}f_{1}, m_{3} = f^2f_{2}^2, m_{4} = ff_{2}f_{1}^2, m_{5} = f_{1}^4\}$.

In {Step} 3, we have $\C_{4,1,1}=\{\widehat{R}_i,i=1,\ldots,7\}$
and $\C_{4,1,2}=\{\widetilde{R}_i,i=1,\ldots,7\}$
\begin{equation*}
\begin{array}{ll}
\widehat{R}_1=7m_{4}m_{5}-6m_{5}^2,\ \
\widehat{R}_2= 3m_{2}m_{3}+m_{3}^2-3m_{3}m_{4},\\
\widehat{R}_3= 2m_{1}m_{2}-\frac{1}{2}m_{1}m_{3}-2m_{2}^2-\frac{1}{3}m_{3}^2+m_{3}m_{4},\ \
\widehat{R}_4= 2m_{2}m_{4}+3m_{3}m_{4}-4m_{4}^2,\\[0.15cm]
\widehat{R}_5= m_{1}m_{5}-6m_{3}m_{4}+28m_{4}^2-\frac{120}{7}m_{5}^2,\ \
\widehat{R}_6= m_{2}m_{5}+5m_{4}^2-\frac{30}{7}m_{5}^2,\\[0.15cm]
\widehat{R}_7= m_{1}m_{4}+m_{2}^2-\frac{2}{3}m_{3}^2+\frac{13}{2}m_{3}m_{4}-6m_{4}^2.
\\ \\
\widetilde{R}_{1} = f^6f_{1}f_{7}+m_{1}^2-4m_{1}m_{2}-3m_{1}m_{3}-12m_{2}^2+8m_{3}^2-114m_{3}m_{4}+240m_{4}^2-\frac{720}{7}m_{5}^2,\\[0.15cm]
 \widetilde{R}_{2} = f^5f_{1}^2f_{6}-\frac{5}{2}m_{1}m_{3}-12m_{2}^2+\frac{19}{3}m_{3}^2-100m_{3}m_{4}+228m_{4}^2-\frac{720}{7}m_{5}^2,\\[0.15cm]
 \widetilde{R}_{3} = f^4f_{1}^3f_{5}-3m_{2}^2+2m_{3}^2-\frac{75}{2}m_{3}m_{4}+102m_{4}^2-\frac{360}{7}m_{5}^2,\\[0.15cm]
 \widetilde{R}_{4} = 2f^5f_{3}^2f_{2}+m_{1}m_{3}+\frac{2}{3}m_{3}^2-2m_{3}m_{4},\\
 \widetilde{R}_{5} = f^6f_{3}f_{5}+m_{1}^2-\frac{1}{4}m_{1}m_{3}-m_{2}^2-\frac{1}{6}m_{3}^2+\frac{1}{2}m_{3}m_{4},\\[0.15cm]
 \widetilde{R}_{6} = f^6f_{2}f_{6}-m_{1}^2+\frac{3}{2}m_{1}m_{3}+4m_{2}^2-m_{3}^2+12m_{3}m_{4}-12m_{4}^2,\\[0.15cm]
 \widetilde{R}_{7} = f^5f_{1}f_{5}f_{2}+\frac{5}{4}m_{1}m_{3}+3m_{2}^2-\frac{7}{6}m_{3}^2+\frac{25}{2}m_{3}m_{4}-12m_{4}^2.
\end{array}
\end{equation*}


In {Step} 4, there exists one intrinsic constraint: $\widehat{R}_{8}=m_3m_5-m_{4}^2$
and $N_2=8$.

In {Step} 5, we eliminate the non-quadratic monomials in $F_{4,1}$ with $\C_{4,1,2}$ and then simplify
the expression with $\C_{4,1,1}$:
\small $$\begin{array}{ll}
\widehat{F}_{4,1}\!\!\!
&= F_{4,1}-(-\frac{1}{4}\widehat{R}_2-\frac{1}{16}\widehat{R}_3-\frac{3}{8}\widehat{R}_7
-\frac{1}{8}\widetilde{R}_1+\frac{1}{16}\widetilde{R}_2+\frac{3}{16}\widetilde{R}_4
-\frac{3}{8}\widetilde{R}_5+\frac{3}{8}\widetilde{R}_7)\\[0.15cm]
&
=\frac{1}{2}m_{1}^2-m_{1}m_{3}-2m_{2}^2+\frac{5}{6}m_{3}^2-10m_{3}m_{4}+18m_{4}^2-\frac{45}{7}m_{5}^2.
\end{array}$$

In {Step 6}, using the Matlab program in Appendix B,
we find the following SOS representation
\begin{equation}\label{eq-cc4}
\widehat{F}_{4,1}=\sum_{i=1}^{8}p_i\widehat{R}_i+ \sum_{i=1}^5 c_i (\sum_{j=i}^5 e_{ij} m_j)^2,
\end{equation}
where

$p_{1} =\frac{100}{779}, p_{2} =\frac{-131}{465}, p_{3} =\frac{643}{1030}, p_{4} =\frac{251}{243}, p_{5} =\frac{127}{264}, p_{6} =\frac{-108}{191}, p_{7} =\frac{-1037}{640}, p_{8} =\frac{-177}{496}$.

$c_1=\frac{1}{2},
 c_2=\frac{3035889}{{33948800}},
 c_3=\frac{187043606491}{32053482761280}$,
 $c_4=\frac{34954092290170394422572362033}{1201809187649038672153249873920}$,

$ c_5= \frac{13282901500034025857972998812856415717525}{23595304673035248961784746798960118147608956}$; %

{\small
$e_{1,1}=1, e_{1,2}=-\frac{643}{515},
e_{1,3}=-\frac{1417}{2060}, e_{1,4}=\frac{1037}{640},
e_{1,5}=-\frac{127}{264}$;

$e_{2,2}=1, e_{2,3}=-\frac{2397232}{31370853},
e_{2,4}=-\frac{353185661}{1475442054}, e_{2,5}=-\frac{3763912520}{19135208367}$;

$e_{3,3}= 1, e_{3,4}= -\frac{26403161591590331}{7999480962407088}, e_{3,5}= \frac{2349867732846895}{1178935851712773}$;

$e_{4,4}=1,
 e_{4,5}=-\frac{23761583089169700449786558901840}{27229237894042737255183870023707}$;

$e_{5,5}= 1$.}

In {Step} 7, from  above SOS representation \eqref{eq-cc4}, $D(4,1)$ is proved.
%
%
Also, equation (\ref{eq-cc4}) is different from that given in~\cite{Cheng2015}.

\section*{Appendix E. Constraints $C(3,n)$ in Section \ref{sec-3}}

\subsection*{Appendix $E_1$. Constraints in Lemma \ref{lm-3cons1}}

The Maple program to compute the 955 constraints in
Lemma \ref{lm-3cons1} can be found in \\
http://www.mmrc.iss.ac.cn/\~\,xgao/software/maple-epicons31.zip.

We  explain how the program works. The program is divided into three
steps.

Step 1. As in the proof of Lemma \ref{lm-3cons1}, we need to consider
all monomials in the variables in ${\mathcal V}_{a,b,c}$
with total order 6 and degree 6,
which can be written as
$M=\prod\limits_{i=1}^{6}\frac{\d ^{h_{i}} p_t}{\d ^{h_{i,1}} x_{a,t}\d ^{h_{i,2}} x_{b,t}\partial^{h_{i,3}} x_{c,t}}$.
Then, 10 cases for $H_6=(h_{1},h_{2},h_{3},h_{4},h_{5},h_{6})$ are considered:
\{$(5,1,0,0,0,0),(4,2,0,0,0,0),
(4,1,1,0,0,0),$ $ (3,2,1,0,0,0),
(3,1,1,1,$ $0,0),$ $ (2,1,1,1,1,0),$
$(3,3,0,0,0,0),(2,2,2,0,0,0),(2,2,1,1,0,0),
(1,1,1,1,$ $1,1)$.\}

Step 2. For each specific value of $H_6$, all possible $h_{i,j}$ are considered.
For instance, if $H_6=(5,1,0,0,0,0)$,
we consider all $h_{1,1}+h_{1,2}+h_{1,3}=5$, $h_{2,1}+h_{2,2}+h_{2,3}=1$,
and $h_{i,j}=0,i>2$.

Step 3. When the values of $h_{i,j}$ are fixed, $M$ is determined.
We find constraints from $M$ by considering all factor $v\in\P$ of $M$ with
$\ord(v)>0$ and write $M=M_1v$. Then, we can compute the constraint from $M_1v$ as in the proof of Lemma \ref{lm-cons1}.

\subsection*{Appendix $E_2$. Constraints in Lemma \ref{lm-3cons2}}
{\footnotesize\parskip=0pt\parindent=2pt
$\mathcal{R}^{(0)}_{1,a,b,c}=p_t^4\frac{\partial p_t}{\partial x_{a,t}}\frac{\partial^5p_t}{\partial x_{a,t}\partial^2x_{b,t}\partial^2x_{c,t}}
+p_t^4\frac{\partial^4p_t}{\partial^2 x_{a,t}\partial^2x_{b,t}}\frac{\partial^2p_t}{\partial^2x_{c,t}}
-p_t^3(\frac{\partial p_t}{\partial x_{c,t}})^2\frac{\partial^4p_t}{\partial^2 x_{a,t}\partial^2x_{b,t}}$

$\mathcal{R}^{(0)}_{2,a,b,c}=p_t^4\frac{\partial^2 p_t}{\partial^2x_{c,t}}\frac{\partial^4p_t}{\partial^2 x_{a,t}\partial^2x_{b,t}}
+p_t^4\frac{\partial^3p_t}{\partial x_{a,t}\partial^2x_{c,t}}\frac{\partial^3p_t}{\partial x_{a,t}\partial^2x_{b,t}}
-p_t^3\frac{\partial p_t}{\partial x_{a,t}}\frac{\partial^3p_t}{\partial x_{a,t}\partial^2x_{b,t}}\frac{\partial^2p_t}{\partial^2x_{c,t}}$

$\mathcal{R}^{(0)}_{3,a,b,c}=p_t^3(\frac{\partial p_t}{\partial x_{c,t}})^2\frac{\partial^4p_t}{\partial^2 x_{a,t}\partial^2x_{b,t}}
+2p_t^3\frac{\partial^3p_t}{\partial x_{a,t}\partial^2x_{b,t}}\frac{\partial p_t}{\partial x_{c,t}}\frac{\partial^2p_t}{\partial x_{a,t}\partial x_{c,t}}
-2p_t^2\frac{\partial^3 p_t}{\partial x_{a,t}\partial^2x_{b,t}}(\frac{\partial p_t}{\partial x_{c,t}})^2\frac{\partial p_t}{\partial x_{a,t}}$

$\mathcal{R}^{(0)}_{4,a,b,c}=p_t^3\frac{\partial p_t}{\partial x_{a,t}}\frac{\partial^3p_t}{\partial x_{a,t}\partial^2x_{b,t}}\frac{\partial^2p_t}{\partial^2x_{c,t}}
+p_t^3\frac{\partial p_t}{\partial x_{b,t}}\frac{\partial^2 p_t}{\partial^2 x_{a,t}}\frac{\partial^3p_t}{\partial x_{b,t}\partial^2x_{c,t}}
+p_t^3\frac{\partial^2 p_t}{\partial^2 x_{a,t}}\frac{\partial^2 p_t}{\partial^2x_{c,t}}\frac{\partial^2 p_t}{\partial^2x_{b,t}}
-2p_t^2(\frac{\partial p_t}{\partial x_{b,t}})^2\frac{\partial^2 p_t}{\partial^2 x_{a,t}}\frac{\partial^2 p_t}{\partial^2x_{c,t}}
$

$\mathcal{R}^{(0)}_{5,a,b,c}=p_t^2\frac{\partial^3 p_t}{\partial x_{a,t}\partial^2x_{b,t}}(\frac{\partial p_t}{\partial x_{c,t}})^2\frac{\partial p_t}{\partial x_{a,t}}
+p_t^2(\frac{\partial p_t}{\partial x_{c,t}})^2\frac{\partial^2 p_t}{\partial^2 x_{a,t}}\frac{\partial^2p_t}{\partial^2x_{b,t}}
+2p_t^2\frac{\partial^2 p_t}{\partial^2 x_{a,t}}\frac{\partial p_t}{\partial x_{b,t}}\frac{\partial p_t}{\partial x_{c,t}}\frac{\partial^2 p_t}{\partial x_{b,t}\partial x_{c,t}}
-3p_t\frac{\partial^2 p_t}{\partial^2 x_{a,t}}(\frac{\partial p_t}{\partial x_{b,t}})^2(\frac{\partial p_t}{\partial x_{c,t}})^2
$

$\mathcal{R}^{(0)}_{6,a,b,c}=p_t\frac{\partial^2 p_t}{\partial^2 x_{a,t}}(\frac{\partial p_t}{\partial x_{b,t}})^2(\frac{\partial p_t}{\partial x_{c,t}})^2
+2p_t\frac{\partial p_t}{\partial x_{a,t}}\frac{\partial p_t}{\partial x_{b,t}}(\frac{\partial p_t}{\partial x_{c,t}})^2\frac{\partial^2 p_t}{\partial x_{a,t}\partial x_{b,t}}
+2p_t\frac{\partial p_t}{\partial x_{a,t}}(\frac{\partial p_t}{\partial x_{b,t}})^2\frac{\partial p_t}{\partial x_{c,t}}\frac{\partial^2 p_t}{\partial x_{a,t}\partial x_{c,t}}$

\hskip20pt$-4(\frac{\partial p_t}{\partial x_{b,t}})^2(\frac{\partial p_t}{\partial x_{c,t}})^2(\frac{\partial p_t}{\partial x_{a,t}})^2
$

}

\subsection*{Appendix $E_3$. Constraints in Lemma \ref{lm-3cons3}}

The two constraints in Lemma \ref{lm-3cons3}:\\
{\footnotesize\parskip=0pt\parindent=2pt\parskip=2pt
$\mathcal{R}^{(0)}_{7,a,b,c}=\frac{p_t^4}{2}\frac{\partial^3 p_t}{\partial x_{a,t}\partial^2x_{c,t}}\frac{\partial^3 p_t}{\partial x_{a,t}\partial^2x_{b,t}}
+\frac{p_t^4}{2}\frac{\partial p_t}{\partial x_{a,t}}\frac{\partial^5 p_t}{\partial x_{a,t}\partial^2x_{b,t}\partial^2x_{c,t}}
-\frac{p_t^3}{2}\frac{\partial p_t}{\partial x_{a,t}}\frac{\partial^3 p_t}{\partial x_{a,t}\partial^2x_{b,t}}\frac{\partial^2 p_t}{\partial^2x_{c,t}}
+\frac{p_t^4}{2}\frac{\partial^4 p_t}{\partial^2 x_{a,t}\partial^2x_{c,t}}\frac{\partial^2 p_t}{\partial^2x_{b,t}}
+\frac{p_t^4}{2}\frac{\partial^2 p_t}{\partial^2 x_{a,t}}\frac{\partial^4 p_t}{\partial^2x_{b,t}\partial^2x_{c,t}}$
$-\frac{p_t^3}{2}\frac{\partial^2 p_t}{\partial^2 x_{a,t}}\frac{\partial^2 p_t}{\partial^2x_{b,t}}\frac{\partial^2 p_t}{\partial^2x_{c,t}}
-\frac{p_t^3}{2}\frac{\partial^4 p_t}{\partial^2 x_{a,t}\partial^2x_{c,t}}(\frac{\partial p_t}{\partial x_{b,t}})^2$
$-p_t^3\frac{\partial^2 p_t}{\partial^2 x_{a,t}}\frac{\partial p_t}{\partial x_{b,t}}\frac{\partial^3 p_t}{\partial x_{b,t}\partial^2x_{c,t}}
+p_t^2\frac{\partial^2 p_t}{\partial^2 x_{a,t}}(\frac{\partial p_t}{\partial x_{b,t}})^2\frac{\partial^2 p_t}{\partial^2x_{c,t}}
$

$\mathcal{R}^{(0)}_{8,a,b,c}=\frac{p_t^3}{2}\frac{\partial^4 p_t}{\partial^2 x_{a,t}\partial^2x_{c,t}}(\frac{\partial p_t}{\partial x_{b,t}})^2
+p_t^3\frac{\partial^2 p_t}{\partial^2 x_{a,t}}\frac{\partial p_t}{\partial x_{b,t}}\frac{\partial^3 p_t}{\partial x_{b,t}\partial^2x_{c,t}}
-p_t^2\frac{\partial^2 p_t}{\partial^2 x_{a,t}}(\frac{\partial p_t}{\partial x_{b,t}})^2\frac{\partial^2 p_t}{\partial^2x_{c,t}}
+p_t^3\frac{\partial^4 p_t}{\partial x_{a,t}\partial x_{b,t}\partial^2x_{c,t}}\frac{\partial p_t}{\partial x_{b,t}}\frac{\partial p_t}{\partial x_{a,t}}\\
+p_t^3\frac{\partial^2 p_t}{\partial x_{a,t}\partial x_{b,t}}\frac{\partial^3 p_t}{\partial x_{b,t}\partial^2x_{c,t}}\frac{\partial p_t}{\partial x_{a,t}}
+p_t^3\frac{\partial^2 p_t}{\partial x_{a,t}\partial x_{b,t}}\frac{\partial p_t}{\partial x_{b,t}}\frac{\partial^3 p_t}{\partial x_{a,t}\partial^2x_{c,t}}
-2p_t^2\frac{\partial^2 p_t}{\partial x_{a,t}\partial x_{b,t}}\frac{\partial p_t}{\partial x_{b,t}}\frac{\partial p_t}{\partial x_{a,t}}\frac{\partial^2 p_t}{\partial^2x_{c,t}}
-2p_t^2\frac{\partial p_t}{\partial x_{a,t}}(\frac{\partial p_t}{\partial x_{b,t}})^2\frac{\partial^3 p_t}{\partial x_{a,t}\partial^2x_{c,t}}$
$-2p_t^2(\frac{\partial p_t}{\partial x_{a,t}})^2\frac{\partial p_t}{\partial x_{b,t}}\frac{\partial^3 p_t}{\partial x_{b,t}\partial^2x_{c,t}}
+3p_t(\frac{\partial p_t}{\partial x_{a,t}})^2(\frac{\partial p_t}{\partial x_{b,t}})^2\frac{\partial^2 p_t}{\partial^2x_{c,t}}
$}
\\
\\
The 20  constraints in Lemma \ref{lm-3cons3}:\\
{\footnotesize\parskip=0pt\parindent=2pt\parskip=5pt
%

$\mathcal{R}^{(2)}_{1,a,b,c}=\frac{3}{2}\, p_t ^{3}
 ({\frac {\partial p_t}{\partial x_{{a}}}}) ^{2}
 {\frac {\partial ^{4}p_t}{\partial^{2} {x_{{c}}}\partial^{2} {x_{{a}}}}}
 +3\, p_t ^{3}
 {\frac {\partial p_t}{\partial x_{{a}}}}   {\frac {\partial ^{2}p_t }{\partial^{2} {x_{{a}}}}}  {\frac {\partial ^{3}p_t}{\partial^{2} {x_{{c}}}\partial x_{{a}}}} -3\, p_t ^{2}
 ({\frac {\partial p_t}{\partial x_{{a}}}}) ^{2} ( {\frac {\partial ^{2}p_t }{\partial^{2} {x_{{a}}}}} ) {\frac {\partial ^{2}p_t}{\partial^{2} {x_{{c}}}}}
 -4\, p_t ^{2}  {\frac {\partial ^{3}p_t}{\partial^{2} {x_{{c}}}\partial x_{{a}}}}
 ({\frac {\partial p_t}{\partial x_{{a}}}}) ^{3}+3\,p_t
 {\frac {\partial ^{2}p_t}{\partial^{2} {x_{{c}}}}}   ({\frac {\partial p_t}{\partial x_{{a}}}}) ^{4}
$

$\mathcal{R}^{(2)}_{2,a,b,c}=\frac{3}{2}\, p_t ^{3}
( {\frac {\partial p_t}{\partial x_{{b}}}}  )^{2}{\frac {\partial ^{4}p_t}{\partial^{2} {x_{{c}}}\partial^{2} {x_{{b}}}}} +3\, p_t ^{3}  {\frac {\partial p_t}{\partial x_{{b}}}}
 {\frac {\partial ^{2}p_t}{\partial^{2} {x_{{b}}}}}
{\frac {\partial ^{3}p_t}{\partial^{2} {x_{{c}}}\partial x_{{b}}}} -3\, p_t ^{2}
( {\frac {\partial p_t}{\partial x_{{b}}}}  ) ^{2}
 {\frac {\partial ^{2}p_t}{\partial^{2} {x_{{b}}}}}
 {\frac {\partial ^{2}p_t}{\partial^{2} {x_{{c}}}}}
 -4\, p_t ^{2}
  {\frac {\partial ^{3}p_t}{\partial^{2} {x_{{c}}}\partial x_{{b}}}}
  ( {\frac {\partial p_t}{\partial x_{{b}}}}  ) ^{3}+3\,p_t
   {\frac {\partial ^{2}p_t}{\partial^{2} {x_{{c}}}}}
   ( {\frac {\partial p_t}{\partial x_{{b}}}}  ) ^{4}
$

$\mathcal{R}^{(2)}_{3,a,b,c}=\frac{3}{2}\, p_t ^{3}
 {\frac {\partial ^{4}p_t}{\partial^{2} {x_{{c}}}\partial x_{{b}}\partial x_{{a}}}}
 ({\frac {\partial p_t}{\partial x_{{a}}}}) ^{2}+3\, p_t ^{3}
 ( {\frac {\partial ^{2}p_t}{\partial x_{{b}}\partial x_{{a}}}}  )
 ({\frac {\partial p_t}{\partial x_{{a}}}}) {\frac {\partial ^{3}p_t}{\partial^{2} {x_{{c}}}\partial x_{{a}}}} -3\, p_t ^{2} ( {\frac {\partial ^{2}p_t}{\partial x_{{b}}\partial x_{{a}}}}  )  ({\frac {\partial p_t}{\partial x_{{a}}}}) ^{2}{\frac {\partial ^{2}p_t}{\partial^{2} {x_{{c}}}}}
 -3\, p_t ^{2} ( {\frac {\partial p_t}{\partial x_{{b}}}}  )  ({\frac {\partial p_t}{\partial x_{{a}}}}) ^{2}{\frac {\partial ^{3}p_t}{\partial^{2} {x_{{c}}}\partial x_{{a}}}}$
 $+3\,p_t  ( {\frac {\partial p_t}{\partial x_{{b}}}}  )  ({\frac {\partial p_t}{\partial x_{{a}}}}) ^{3}{\frac {\partial ^{2}p_t}{\partial^{2} {x_{{c}}}}} - p_t ^{2} ( {\frac {\partial ^{3}p_t}{\partial^{2} {x_{{c}}}\partial x_{{b}}}}  )  ({\frac {\partial p_t}{\partial x_{{a}}}}) ^{3}
$

$\mathcal{R}^{(2)}_{4,a,b,c}=\frac{3}{2}\, p_t ^{3} {\frac {\partial ^{4}p_t}{\partial^{2} {x_{{c}}}\partial x_{{b}}\partial x_{{a}}}}   ( {\frac {\partial p_t}{\partial x_{{b}}}}  ) ^{2}+3\, p_t ^{3} ( {\frac {\partial ^{2}p_t}{\partial x_{{b}}\partial x_{{a}}}}  )  ( {\frac {\partial p_t}{\partial x_{{b}}}}  ) {\frac {\partial ^{3}p_t}{\partial^{2} {x_{{c}}}\partial x_{{b}}}} -3\, p_t ^{2} ( {\frac {\partial ^{2}p_t}{\partial x_{{b}}\partial x_{{a}}}}  )  ( {\frac {\partial p_t}{\partial x_{{b}}}}  ) ^{2}{\frac {\partial ^{2}p_t}{\partial^{2} {x_{{c}}}}}
 -3\, p_t ^{2} ( {\frac {\partial ^{3}p_t}{\partial^{2} {x_{{c}}}\partial x_{{b}}}}  )  ({\frac {\partial p_t}{\partial x_{{a}}}})  ( {\frac {\partial p_t}{\partial x_{{b}}}}  ) ^{2}$
 $- p_t ^{2} ( {\frac {\partial ^{3}p_t}{\partial^{2} {x_{{c}}}\partial x_{{a}}}}  )  ( {\frac {\partial p_t}{\partial x_{{b}}}}  ) ^{3}+3\,p_t  ({\frac {\partial p_t}{\partial x_{{a}}}})  ( {\frac {\partial p_t}{\partial x_{{b}}}}  ) ^{3}{\frac {\partial ^{2}p_t}{\partial^{2} {x_{{c}}}}}
$

$\mathcal{R}^{(2)}_{5,a,b,c}=\frac{1}{2}\, p_t ^{4} ( {\frac {\partial ^{3}p_t}{\partial^{2} {x_{{c}}}\partial x_{{a}}}}  )
{\frac {\partial ^{3}p_t}{\partial^{3} {x_{{a}}}}} +\frac{1}{2}\, p_t ^{4}
 ({\frac {\partial p_t}{\partial x_{{a}}}})
 {\frac {\partial ^{5}p_t}{\partial^{2} {x_{{c}}}\partial^{3} {x_{{a}}}}} -\frac{1}{2}\, p_t ^{3} ({\frac {\partial p_t}{\partial x_{{a}}}})
  ( {\frac {\partial ^{3}p_t}{\partial^{3} {x_{{a}}}}}  ) {\frac {\partial ^{2}p_t}{\partial^{2} {x_{{c}}}}} + p_t ^{4}  {\frac {\partial ^{4}p_t}{\partial^{2} {x_{{c}}}\partial^{2} {x_{{a}}}}}   {\frac {\partial ^{2}}{\partial^{2} {x_{{a}}}}}p_t$
  $-\frac{1}{2}\, p_t ^{3} ( {\frac {\partial ^{2}p_t }{\partial^{2} {x_{{a}}}}} ) ^{2}{\frac {\partial ^{2}p_t}{\partial^{2} {x_{{c}}}}}
 -\frac{1}{2}\, p_t ^{3} ({\frac {\partial p_t}{\partial x_{{a}}}}) ^{2}{\frac {\partial ^{4}p_t}{\partial^{2} {x_{{c}}}\partial^{2} {x_{{a}}}}}$
  $- p_t ^{3} ({\frac {\partial p_t}{\partial x_{{a}}}})  ( {\frac {\partial ^{2}p_t }{\partial^{2} {x_{{a}}}}} ) {\frac {\partial ^{3}p_t}{\partial^{2} {x_{{c}}}\partial x_{{a}}}} + p_t ^{2} ({\frac {\partial p_t}{\partial x_{{a}}}}) ^{2} ( {\frac {\partial ^{2}p_t }{\partial^{2} {x_{{a}}}}} ) {\frac {\partial ^{2}p_t}{\partial^{2} {x_{{c}}}}}
$

$\mathcal{R}^{(2)}_{6,a,b,c}=\frac{1}{2}\, p_t ^{4} ( {\frac {\partial ^{3}p_t}{\partial^{2} {x_{{c}}}\partial x_{{b}}}}  )
{\frac {\partial ^{3}p_t}{\partial^{3} {x_{{b}}}}} +\frac{1}{2}\, p_t ^{4} ( {\frac {\partial p_t}{\partial x_{{b}}}}  )
 {\frac {\partial ^{5}p_t}{\partial^{2} {x_{{c}}}\partial^{3} {x_{{b}}}}} -\frac{1}{2}\, p_t ^{3} ( {\frac {\partial p_t}{\partial x_{{b}}}}  )  ( {\frac {\partial ^{3}p_t}{\partial^{3} {x_{{b}}}}}  ) {\frac {\partial ^{2}p_t}{\partial^{2} {x_{{c}}}}} + p_t ^{4} ( {\frac {\partial ^{4}p_t}{\partial^{2} {x_{{c}}}\partial^{2} {x_{{b}}}}}  ) {\frac {\partial ^{2}p_t}{\partial^{2} {x_{{b}}}}} -\frac{1}{2}\, p_t ^{3} ( {\frac {\partial ^{2}p_t}{\partial^{2} {x_{{b}}}}}  ) ^{2}{\frac {\partial ^{2}p_t}{\partial^{2} {x_{{c}}}}} -\frac{1}{2}\, p_t ^{3} ( {\frac {\partial p_t}{\partial x_{{b}}}}  ) ^{2}{\frac {\partial ^{4}p_t}{\partial^{2} {x_{{c}}}\partial^{2} {x_{{b}}}}} - p_t ^{3} ( {\frac {\partial p_t}{\partial x_{{b}}}}  )  ( {\frac {\partial ^{2}p_t}{\partial^{2} {x_{{b}}}}}  ) {\frac {\partial ^{3}p_t}{\partial^{2} {x_{{c}}}\partial x_{{b}}}} + p_t ^{2} ( {\frac {\partial p_t}{\partial x_{{b}}}}  ) ^{2} ( {\frac {\partial ^{2}p_t}{\partial^{2} {x_{{b}}}}}  ) {\frac {\partial ^{2}p_t}{\partial^{2} {x_{{c}}}}} $

$\mathcal{R}^{(2)}_{7,a,b,c}=\frac{1}{2}\, p_t ^{4}
  {\frac {\partial ^{5}p_t}{\partial^{2} {x_{{c}}}\partial x_{{b}}\partial^{2} {x_{{a}}}}}  {\frac {\partial p_t}{\partial x_{{a}}}}
  +\frac{1}{2}\, p_t ^{4}
  {\frac {\partial ^{3}p_t }{\partial x_{{b}}\partial^{2} {x_{{a}}}}}
   {\frac {\partial ^{3}p_t}{\partial^{2} {x_{{c}}}\partial x_{{a}}}}
   -\frac{1}{2}\, p_t ^{3}  {\frac {\partial ^{3}p_t }{\partial x_{{b}}\partial^{2} {x_{{a}}}}}
   {\frac {\partial p_t}{\partial x_{{a}}}} {\frac {\partial ^{2}p_t}{\partial^{2} {x_{{c}}}}}   +\frac{1}{2}\, p_t ^{4} {\frac {\partial ^{4}p_t}{\partial^{2} {x_{{c}}}\partial x_{{b}}\partial x_{{a}}}}  {\frac {\partial ^{2}}{\partial^{2} {x_{{a}}}}}p_t
   -\frac{1}{2}\, p_t ^{3} {\frac {\partial ^{4}p_t}{\partial^{2} {x_{{c}}}\partial x_{{b}}\partial x_{{a}}}}   ({\frac {\partial p_t}{\partial x_{{a}}}}) ^{2}$ $+\frac{1}{2}\, p_t ^{4} ( {\frac {\partial ^{2}p_t}{\partial x_{{b}}\partial x_{{a}}}}  ) {\frac {\partial ^{4}p_t}{\partial^{2} {x_{{c}}}\partial^{2} {x_{{a}}}}} -\frac{1}{2}\, p_t ^{3} ( {\frac {\partial ^{2}p_t}{\partial x_{{b}}\partial x_{{a}}}}  )  ( {\frac {\partial ^{2}p_t }{\partial^{2} {x_{{a}}}}} ) {\frac {\partial ^{2}p_t}{\partial^{2} {x_{{c}}}}} - p_t ^{3} ( {\frac {\partial ^{2}p_t}{\partial x_{{b}}\partial x_{{a}}}}  )  ({\frac {\partial p_t}{\partial x_{{a}}}}) {\frac {\partial ^{3}p_t}{\partial^{2} {x_{{c}}}\partial x_{{a}}}} + p_t ^{2} ( {\frac {\partial ^{2}p_t}{\partial x_{{b}}\partial x_{{a}}}}  )  ({\frac {\partial p_t}{\partial x_{{a}}}}) ^{2}{\frac {\partial ^{2}p_t}{\partial^{2} {x_{{c}}}}}
$

$\mathcal{R}^{(2)}_{8,a,b,c}=\frac{1}{2}\, p_t ^{4}
 {\frac {\partial ^{5}p_t }{\partial^{2} {x_{{c}}}\partial^{2} {x_{{b}}}\partial x_{{a}}}} {\frac {\partial p_t}{\partial x_{{b}}}} +\frac{1}{2}\, p_t ^{4}
  {\frac {\partial ^{3}p_t}{\partial^{2} {x_{{b}}}\partial x_{{a}}}}   {\frac {\partial ^{3}p_t}{\partial^{2} {x_{{c}}}\partial x_{{b}}}} -\frac{1}{2}\, p_t ^{3}
   {\frac {\partial ^{3}p_t}{\partial^{2} {x_{{b}}}\partial x_{{a}}}}
      {\frac {\partial p_t}{\partial x_{{b}}}}   {\frac {\partial ^{2}p_t}{\partial^{2} {x_{{c}}}}}   +\frac{1}{2}\, p_t ^{4}
       {\frac {\partial ^{2}p_t}{\partial^{2} {x_{{b}}}}}
  {\frac {\partial ^{4}p_t}{\partial^{2} {x_{{c}}}\partial x_{{b}}\partial x_{{a}}}} -\frac{1}{2}\, p_t ^{3} {\frac {\partial ^{4}p_t}{\partial^{2} {x_{{c}}}\partial x_{{b}}\partial x_{{a}}}}
  ( {\frac {\partial p_t}{\partial x_{{b}}}}  ) ^{2}+\frac{1}{2}\, p_t ^{4}
   {\frac {\partial ^{4}p_t}{\partial^{2} {x_{{c}}}\partial^{2} {x_{{b}}}}}
   {\frac {\partial ^{2}p_t}{\partial x_{{b}}\partial x_{{a}}}} -\frac{1}{2}\, p_t ^{3}
    {\frac {\partial ^{2}p_t}{\partial^{2} {x_{{b}}}}}
     {\frac {\partial ^{2}p_t}{\partial x_{{b}}\partial x_{{a}}}}
      {\frac {\partial ^{2}p_t}{\partial^{2} {x_{{c}}}}} - p_t ^{3}
       {\frac {\partial ^{2}p_t}{\partial x_{{b}}\partial x_{{a}}}}
       {\frac {\partial p_t}{\partial x_{{b}}}}
      {\frac {\partial ^{3}p_t}{\partial^{2} {x_{{c}}}\partial x_{{b}}}} + p_t ^{2}
       {\frac {\partial ^{2}p_t}{\partial x_{{b}}\partial x_{{a}}}}    ( {\frac {\partial p_t}{\partial x_{{b}}}}  ) ^{2}{\frac {\partial ^{2}p_t}{\partial^{2} {x_{{c}}}}}
$

$\mathcal{R}^{(2)}_{9,a,b,c}=\frac{1}{2}\, p_t ^{4}
({\frac {\partial p_t}{\partial x_{{a}}}}) {\frac {\partial ^{5}p_t }{\partial^{2} {x_{{c}}}\partial^{2} {x_{{b}}}\partial x_{{a}}}}+\frac{1}{2}\, p_t ^{4}
  {\frac {\partial ^{3}p_t}{\partial^{2} {x_{{c}}}\partial x_{{a}}}}   {\frac {\partial ^{3}p_t}{\partial^{2} {x_{{b}}}\partial x_{{a}}}}
 -\frac{1}{2}\, p_t ^{3} {\frac {\partial p_t}{\partial x_{{a}}}}
  {\frac {\partial ^{3}p_t}{\partial^{2} {x_{{b}}}\partial x_{{a}}}}   {\frac {\partial ^{2}p_t}{\partial^{2} {x_{{c}}}}}
  + p_t ^{4} {\frac {\partial ^{4}p_t}{\partial^{2} {x_{{c}}}\partial x_{{b}}\partial x_{{a}}}}  {\frac {\partial ^{2}p_t}{\partial x_{{b}}\partial x_{{a}}}} -\frac{1}{2}\, p_t ^{3} {\frac {\partial ^{4}p_t}{\partial^{2} {x_{{c}}}\partial x_{{b}}\partial x_{{a}}}}
   {\frac {\partial p_t}{\partial x_{{b}}}}  {\frac {\partial p_t}{\partial x_{{a}}}}$
   $-\frac{1}{2}\, p_t ^{3} ( {\frac {\partial ^{2}p_t}{\partial x_{{b}}\partial x_{{a}}}}  ) ^{2}{\frac {\partial ^{2}p_t}{\partial^{2} {x_{{c}}}}}
   -\frac{1}{2}\, p_t ^{3}
    {\frac {\partial ^{2}p_t}{\partial x_{{b}}\partial x_{{a}}}}
     {\frac {\partial p_t}{\partial x_{{b}}}}   {\frac {\partial ^{3}p_t}{\partial^{2} {x_{{c}}}\partial x_{{a}}}} + p_t ^{2}
      {\frac {\partial ^{2}p_t}{\partial x_{{b}}\partial x_{{a}}}}
       {\frac {\partial p_t}{\partial x_{{b}}}}
       {\frac {\partial p_t}{\partial x_{{a}}}}
       {\frac {\partial ^{2}p_t}{\partial^{2} {x_{{c}}}}} -\frac{1}{2}\, p_t ^{3}
        {\frac {\partial ^{2}p_t}{\partial x_{{b}}\partial x_{{a}}}}
         {\frac {\partial ^{3}p_t}{\partial^{2} {x_{{c}}}\partial x_{{b}}}}   {\frac {\partial p_t}{\partial x_{{a}}}}
$

$\mathcal{R}^{(2)}_{10,a,b,c}=\frac{1}{2}\, p_t ^{4} ({\frac {\partial p_t}{\partial x_{{a}}}}) {\frac {\partial ^{5}p_t }{\partial^{2} {x_{{c}}}\partial^{2} {x_{{b}}}\partial x_{{a}}}}+\frac{1}{2}\, p_t ^{4} {\frac {\partial ^{3}p_t}{\partial^{2} {x_{{c}}}\partial x_{{a}}}}   {\frac {\partial ^{3}p_t}{\partial^{2} {x_{{b}}}\partial x_{{a}}}} -\frac{1}{2}\, p_t ^{3} ({\frac {\partial p_t}{\partial x_{{a}}}})   {\frac {\partial ^{3}p_t}{\partial^{2} {x_{{b}}}\partial x_{{a}}}}   {\frac {\partial ^{2}p_t}{\partial^{2} {x_{{c}}}}}   +\frac{1}{2}\, p_t ^{4} ( {\frac {\partial ^{2}p_t }{\partial^{2} {x_{{a}}}}} ) {\frac {\partial ^{4}p_t}{\partial^{2} {x_{{c}}}\partial^{2} {x_{{b}}}}} -\frac{1}{2}\, p_t ^{3} ({\frac {\partial p_t}{\partial x_{{a}}}}) ^{2}{\frac {\partial ^{4}p_t}{\partial^{2} {x_{{c}}}\partial^{2} {x_{{b}}}}} +\frac{1}{2}\, p_t ^{4}  {\frac {\partial ^{4}p_t}{\partial^{2} {x_{{c}}}\partial^{2} {x_{{a}}}}}   {\frac {\partial ^{2}p_t}{\partial^{2} {x_{{b}}}}}
 -\frac{1}{2}\, p_t ^{3} ( {\frac {\partial ^{2}p_t }{\partial^{2} {x_{{a}}}}} )  ( {\frac {\partial ^{2}p_t}{\partial^{2} {x_{{c}}}}}  ) {\frac {\partial ^{2}p_t}{\partial^{2} {x_{{b}}}}} - p_t ^{3} ({\frac {\partial p_t}{\partial x_{{a}}}})  ( {\frac {\partial ^{2}p_t}{\partial^{2} {x_{{b}}}}}  ) {\frac {\partial ^{3}p_t}{\partial^{2} {x_{{c}}}\partial x_{{a}}}}
+ p_t ^{2} ({\frac {\partial p_t}{\partial x_{{a}}}}) ^{2} ( {\frac {\partial ^{2}p_t}{\partial^{2} {x_{{b}}}}}  ) {\frac {\partial ^{2}p_t}{\partial^{2} {x_{{c}}}}}
$

$\mathcal{R}^{(2)}_{11,a,b,c}=\frac{1}{2}\, p_t ^{4} ( {\frac {\partial p_t}{\partial x_{{b}}}}  ) {\frac {\partial ^{5}p_t}{\partial^{2} {x_{{c}}}\partial x_{{b}}\partial^{2} {x_{{a}}}}} +\frac{1}{2}\, p_t ^{4} ( {\frac {\partial ^{3}p_t}{\partial^{2} {x_{{c}}}\partial x_{{b}}}}  ) {\frac {\partial ^{3}p_t }{\partial x_{{b}}\partial^{2} {x_{{a}}}}}-\frac{1}{2}\, p_t ^{3} ( {\frac {\partial p_t}{\partial x_{{b}}}}  )  ( {\frac {\partial ^{3}p_t }{\partial x_{{b}}\partial^{2} {x_{{a}}}}} ) {\frac {\partial ^{2}p_t}{\partial^{2} {x_{{c}}}}} +\frac{1}{2}\, p_t ^{4}  {\frac {\partial ^{4}p_t}{\partial^{2} {x_{{c}}}\partial^{2} {x_{{a}}}}}   {\frac {\partial ^{2}p_t}{\partial^{2} {x_{{b}}}}} -\frac{1}{2}\, p_t ^{3} ( {\frac {\partial p_t}{\partial x_{{b}}}}  ) ^{2}{\frac {\partial ^{4}p_t}{\partial^{2} {x_{{c}}}\partial^{2} {x_{{a}}}}} +\frac{1}{2}\, p_t ^{4} ( {\frac {\partial ^{2}p_t }{\partial^{2} {x_{{a}}}}} ) {\frac {\partial ^{4}p_t}{\partial^{2} {x_{{c}}}\partial^{2} {x_{{b}}}}} -\frac{1}{2}\, p_t ^{3} ( {\frac {\partial ^{2}p_t }{\partial^{2} {x_{{a}}}}} )  ( {\frac {\partial ^{2}p_t}{\partial^{2} {x_{{c}}}}}  ) {\frac {\partial ^{2}p_t}{\partial^{2} {x_{{b}}}}} - p_t ^{3} ( {\frac {\partial p_t}{\partial x_{{b}}}}  )  ( {\frac {\partial ^{2}p_t }{\partial^{2} {x_{{a}}}}} ) {\frac {\partial ^{3}p_t}{\partial^{2} {x_{{c}}}\partial x_{{b}}}}
 + p_t ^{2} ( {\frac {\partial p_t}{\partial x_{{b}}}}  ) ^{2}
 ( {\frac {\partial ^{2}p_t }{\partial^{2} {x_{{a}}}}} ) {\frac {\partial ^{2}p_t}{\partial^{2} {x_{{c}}}}}
$

$\mathcal{R}^{(2)}_{12,a,b,c}=\frac{1}{2}\, p_t ^{4}
 {\frac {\partial p_t}{\partial x_{{b}}}}   {\frac {\partial ^{5}p_t}{\partial^{2} {x_{{c}}}\partial x_{{b}}\partial^{2} {x_{{a}}}}} +\frac{1}{2}\, p_t ^{4}
  {\frac {\partial ^{3}p_t}{\partial^{2} {x_{{c}}}\partial x_{{b}}}}   {\frac {\partial ^{3}p_t }{\partial x_{{b}}\partial^{2} {x_{{a}}}}}-\frac{1}{2}\, p_t ^{3}
   {\frac {\partial p_t}{\partial x_{{b}}}}
    {\frac {\partial ^{3}p_t }{\partial x_{{b}}\partial^{2} {x_{{a}}}}}  {\frac {\partial ^{2}p_t}{\partial^{2} {x_{{c}}}}}
 + p_t ^{4} {\frac {\partial ^{4}p_t}{\partial^{2} {x_{{c}}}\partial x_{{b}}\partial x_{{a}}}}  {\frac {\partial ^{2}p_t}{\partial x_{{b}}\partial x_{{a}}}} -\frac{1}{2}\, p_t ^{3} {\frac {\partial ^{4}p_t}{\partial^{2} {x_{{c}}}\partial x_{{b}}\partial x_{{a}}}}
  {\frac {\partial p_t}{\partial x_{{b}}}}   {\frac {\partial p_t}{\partial x_{{a}}}} -\frac{1}{2}\, p_t ^{3} ( {\frac {\partial ^{2}p_t}{\partial x_{{b}}\partial x_{{a}}}}  ) ^{2}{\frac {\partial ^{2}p_t}{\partial^{2} {x_{{c}}}}}   -\frac{1}{2}\, p_t ^{3}
   {\frac {\partial ^{2}p_t}{\partial x_{{b}}\partial x_{{a}}}}
    {\frac {\partial p_t}{\partial x_{{b}}}}   {\frac {\partial ^{3}p_t}{\partial^{2} {x_{{c}}}\partial x_{{a}}}} + p_t ^{2}
     {\frac {\partial ^{2}p_t}{\partial x_{{b}}\partial x_{{a}}}}
       {\frac {\partial p_t}{\partial x_{{b}}}}
       {\frac {\partial p_t}{\partial x_{{a}}}} {\frac {\partial ^{2}p_t}{\partial^{2} {x_{{c}}}}} -\frac{1}{2}\, p_t ^{3}
        {\frac {\partial ^{2}p_t}{\partial x_{{b}}\partial x_{{a}}}}
         {\frac {\partial ^{3}p_t}{\partial^{2} {x_{{c}}}\partial x_{{b}}}}   {\frac {\partial p_t}{\partial x_{{a}}}}
$

$\mathcal{R}^{(2)}_{13,a,b,c}=\frac{1}{2}\, p_t ^{4}
  {\frac {\partial ^{5}p_t}{\partial^{2} {x_{{c}}}\partial^{3} {x_{{a}}}}}
  {\frac {\partial p_t}{\partial x_{{b}}}} +\frac{1}{2}\, p_t ^{4}
  {\frac {\partial ^{3}p_t}{\partial^{3} {x_{{a}}}}}
  {\frac {\partial ^{3}p_t}{\partial^{2} {x_{{c}}}\partial x_{{b}}}} -\frac{1}{2}\, p_t ^{3}
   {\frac {\partial ^{3}p_t}{\partial^{3} {x_{{a}}}}}
   {\frac {\partial p_t}{\partial x_{{b}}}}
   {\frac {\partial ^{2}p_t}{\partial^{2} {x_{{c}}}}} +\frac{1}{2}\, p_t ^{4}
   {\frac {\partial ^{2}p_t}{\partial x_{{b}}\partial x_{{a}}}}
    {\frac {\partial ^{4}p_t}{\partial^{2} {x_{{c}}}\partial^{2} {x_{{a}}}}} -\frac{1}{2}\, p_t ^{3}  {\frac {\partial ^{4}p_t}{\partial^{2} {x_{{c}}}\partial^{2} {x_{{a}}}}}
    {\frac {\partial p_t}{\partial x_{{a}}}} {\frac {\partial p_t}{\partial x_{{b}}}} +\frac{1}{2}\, p_t ^{4} {\frac {\partial ^{4}p_t}{\partial^{2} {x_{{c}}}\partial x_{{b}}\partial x_{{a}}}}  {\frac {\partial ^{2}}{\partial^{2} {x_{{a}}}}}p_t -\frac{1}{2}\, p_t ^{3}
     {\frac {\partial ^{2}p_t}{\partial x_{{b}}\partial x_{{a}}}}
     {\frac {\partial ^{2}p_t }{\partial^{2} {x_{{a}}}}}  {\frac {\partial ^{2}p_t}{\partial^{2} {x_{{c}}}}}
 -\frac{1}{2}\, p_t ^{3}
  {\frac {\partial ^{2}p_t }{\partial^{2} {x_{{a}}}}}    {\frac {\partial ^{3}p_t}{\partial^{2} {x_{{c}}}\partial x_{{a}}}}   {\frac {\partial p_t}{\partial x_{{b}}}} + p_t ^{2}
   {\frac {\partial ^{2}p_t }{\partial^{2} {x_{{a}}}}}
   {\frac {\partial p_t}{\partial x_{{a}}}}
    {\frac {\partial p_t}{\partial x_{{b}}}}
     {\frac {\partial ^{2}p_t}{\partial^{2} {x_{{c}}}}} -\frac{1}{2}\, p_t ^{3}
     {\frac {\partial ^{2}p_t }{\partial^{2} {x_{{a}}}}}
     {\frac {\partial p_t}{\partial x_{{a}}}} {\frac {\partial ^{3}p_t}{\partial^{2} {x_{{c}}}\partial x_{{b}}}}
$

$\mathcal{R}^{(2)}_{14,a,b,c}=\frac{1}{2}\, p_t ^{4}  {\frac {\partial ^{5}p_t}{\partial^{2} {x_{{c}}}\partial x_{{b}}\partial^{2} {x_{{a}}}}}  {\frac {\partial p_t}{\partial x_{{a}}}} +\frac{1}{2}\, p_t ^{4}
 {\frac {\partial ^{3}p_t }{\partial x_{{b}}\partial^{2} {x_{{a}}}}}  {\frac {\partial ^{3}p_t}{\partial^{2} {x_{{c}}}\partial x_{{a}}}} -\frac{1}{2}\, p_t ^{3}
  {\frac {\partial ^{3}p_t }{\partial x_{{b}}\partial^{2} {x_{{a}}}}}
  {\frac {\partial p_t}{\partial x_{{a}}}} {\frac {\partial ^{2}p_t}{\partial^{2} {x_{{c}}}}}
 +\frac{1}{2}\, p_t ^{4}
  {\frac {\partial ^{2}p_t}{\partial x_{{b}}\partial x_{{a}}}}   {\frac {\partial ^{4}p_t}{\partial^{2} {x_{{c}}}\partial^{2} {x_{{a}}}}} -\frac{1}{2}\, p_t ^{3}  {\frac {\partial ^{4}p_t}{\partial^{2} {x_{{c}}}\partial^{2} {x_{{a}}}}}
  {\frac {\partial p_t}{\partial x_{{a}}}} {\frac {\partial p_t}{\partial x_{{b}}}} +\frac{1}{2}\, p_t ^{4} {\frac {\partial ^{4}p_t}{\partial^{2} {x_{{c}}}\partial x_{{b}}\partial x_{{a}}}}  {\frac {\partial ^{2}}{\partial^{2} {x_{{a}}}}}p_t -\frac{1}{2}\, p_t ^{3}
   {\frac {\partial ^{2}p_t}{\partial x_{{b}}\partial x_{{a}}}}
    {\frac {\partial ^{2}p_t }{\partial^{2} {x_{{a}}}}}  {\frac {\partial ^{2}p_t}{\partial^{2} {x_{{c}}}}} -\frac{1}{2}\, p_t ^{3}
     {\frac {\partial ^{2}p_t }{\partial^{2} {x_{{a}}}}}    {\frac {\partial ^{3}p_t}{\partial^{2} {x_{{c}}}\partial x_{{a}}}}   {\frac {\partial p_t}{\partial x_{{b}}}} + p_t ^{2}
      {\frac {\partial ^{2}p_t }{\partial^{2} {x_{{a}}}}}
      {\frac {\partial p_t}{\partial x_{{a}}}}
       {\frac {\partial p_t}{\partial x_{{b}}}}   {\frac {\partial ^{2}p_t}{\partial^{2} {x_{{c}}}}} -\frac{1}{2}\, p_t ^{3}
        {\frac {\partial ^{2}p_t }{\partial^{2} {x_{{a}}}}}
        {\frac {\partial p_t}{\partial x_{{a}}}} {\frac {\partial ^{3}p_t}{\partial^{2} {x_{{c}}}\partial x_{{b}}}}
$

$\mathcal{R}^{(2)}_{15,a,b,c}=\frac{1}{2}\, p_t ^{4} {\frac {\partial ^{5}p_t }{\partial^{2} {x_{{c}}}\partial^{2} {x_{{b}}}\partial x_{{a}}}} {\frac {\partial p_t}{\partial x_{{b}}}} +\frac{1}{2}\, p_t ^{4}  {\frac {\partial ^{3}p_t}{\partial^{2} {x_{{b}}}\partial x_{{a}}}}   {\frac {\partial ^{3}p_t}{\partial^{2} {x_{{c}}}\partial x_{{b}}}} -\frac{1}{2}\, p_t ^{3}  {\frac {\partial ^{3}p_t}{\partial^{2} {x_{{b}}}\partial x_{{a}}}}
 {\frac {\partial p_t}{\partial x_{{b}}}}   {\frac {\partial ^{2}p_t}{\partial^{2} {x_{{c}}}}}
 +\frac{1}{2}\, p_t ^{4}  {\frac {\partial ^{4}p_t}{\partial^{2} {x_{{c}}}\partial^{2} {x_{{b}}}}}   {\frac {\partial ^{2}p_t}{\partial x_{{b}}\partial x_{{a}}}} -\frac{1}{2}\, p_t ^{3}  {\frac {\partial ^{4}p_t}{\partial^{2} {x_{{c}}}\partial^{2} {x_{{b}}}}}
 {\frac {\partial p_t}{\partial x_{{a}}}} {\frac {\partial p_t}{\partial x_{{b}}}} +\frac{1}{2}\, p_t ^{4}
  {\frac {\partial ^{2}p_t}{\partial^{2} {x_{{b}}}}}  {\frac {\partial ^{4}p_t}{\partial^{2} {x_{{c}}}\partial x_{{b}}\partial x_{{a}}}} -\frac{1}{2}\, p_t ^{3}
   {\frac {\partial ^{2}p_t}{\partial^{2} {x_{{b}}}}}
    {\frac {\partial ^{2}p_t}{\partial x_{{b}}\partial x_{{a}}}}   {\frac {\partial ^{2}p_t}{\partial^{2} {x_{{c}}}}} -\frac{1}{2}\, p_t ^{3}
  {\frac {\partial ^{2}p_t}{\partial^{2} {x_{{b}}}}}   {\frac {\partial ^{3}p_t}{\partial^{2} {x_{{c}}}\partial x_{{a}}}}   {\frac {\partial p_t}{\partial x_{{b}}}} + p_t ^{2}
   {\frac {\partial ^{2}p_t}{\partial^{2} {x_{{b}}}}}
   {\frac {\partial p_t}{\partial x_{{a}}}}
   {\frac {\partial p_t}{\partial x_{{b}}}}   {\frac {\partial ^{2}p_t}{\partial^{2} {x_{{c}}}}} -\frac{1}{2}\, p_t ^{3}
    {\frac {\partial ^{2}p_t}{\partial^{2} {x_{{b}}}}}
    {\frac {\partial p_t}{\partial x_{{a}}}} {\frac {\partial ^{3}p_t}{\partial^{2} {x_{{c}}}\partial x_{{b}}}}
$

$\mathcal{R}^{(2)}_{16,a,b,c}=\frac{1}{2}\, p_t ^{4}
 {\frac {\partial ^{5}p_t}{\partial^{2} {x_{{c}}}\partial^{3} {x_{{b}}}}}  {\frac {\partial p_t}{\partial x_{{a}}}} +\frac{1}{2}\, p_t ^{4}
  {\frac {\partial ^{3}p_t}{\partial^{3} {x_{{b}}}}}  {\frac {\partial ^{3}p_t}{\partial^{2} {x_{{c}}}\partial x_{{a}}}} -\frac{1}{2}\, p_t ^{3}
   {\frac {\partial ^{3}p_t}{\partial^{3} {x_{{b}}}}}
   {\frac {\partial p_t}{\partial x_{{a}}}} {\frac {\partial ^{2}p_t}{\partial^{2} {x_{{c}}}}} +\frac{1}{2}\, p_t ^{4}  {\frac {\partial ^{4}p_t}{\partial^{2} {x_{{c}}}\partial^{2} {x_{{b}}}}}   {\frac {\partial ^{2}p_t}{\partial x_{{b}}\partial x_{{a}}}} -\frac{1}{2}\, p_t ^{3}  {\frac {\partial ^{4}p_t}{\partial^{2} {x_{{c}}}\partial^{2} {x_{{b}}}}}
   {\frac {\partial p_t}{\partial x_{{a}}}} {\frac {\partial p_t}{\partial x_{{b}}}} +\frac{1}{2}\, p_t ^{4}
    {\frac {\partial ^{2}p_t}{\partial^{2} {x_{{b}}}}} {\frac {\partial ^{4}p_t}{\partial^{2} {x_{{c}}}\partial x_{{b}}\partial x_{{a}}}} -\frac{1}{2}\, p_t ^{3}
    {\frac {\partial ^{2}p_t}{\partial^{2} {x_{{b}}}}}
     {\frac {\partial ^{2}p_t}{\partial x_{{b}}\partial x_{{a}}}}  {\frac {\partial ^{2}p_t}{\partial^{2} {x_{{c}}}}} -\frac{1}{2}\, p_t ^{3}
     {\frac {\partial ^{2}p_t}{\partial^{2} {x_{{b}}}}}   {\frac {\partial ^{3}p_t}{\partial^{2} {x_{{c}}}\partial x_{{a}}}}   {\frac {\partial p_t}{\partial x_{{b}}}} + p_t ^{2}
      {\frac {\partial ^{2}p_t}{\partial^{2} {x_{{b}}}}}
     {\frac {\partial p_t}{\partial x_{{a}}}}
     {\frac {\partial p_t}{\partial x_{{b}}}} {\frac {\partial ^{2}p_t}{\partial^{2} {x_{{c}}}}} -\frac{1}{2}\, p_t ^{3}
      {\frac {\partial ^{2}p_t}{\partial^{2} {x_{{b}}}}}
      {\frac {\partial p_t}{\partial x_{{a}}}} {\frac {\partial ^{3}p_t}{\partial^{2} {x_{{c}}}\partial x_{{b}}}}
$

$\mathcal{R}^{(2)}_{17,a,b,c}=\frac{1}{2}\, p_t ^{3} {\frac {\partial ^{4}p_t}{\partial^{2} {x_{{c}}}\partial x_{{b}}\partial x_{{a}}}}
({\frac {\partial p_t}{\partial x_{{a}}}}) ^{2}+ p_t ^{3}
 {\frac {\partial ^{2}p_t}{\partial x_{{b}}\partial x_{{a}}}}
 {\frac {\partial p_t}{\partial x_{{a}}}} {\frac {\partial ^{3}p_t}{\partial^{2} {x_{{c}}}\partial x_{{a}}}} - p_t ^{2}
  {\frac {\partial ^{2}p_t}{\partial x_{{b}}\partial x_{{a}}}}
  ({\frac {\partial p_t}{\partial x_{{a}}}}) ^{2}{\frac {\partial ^{2}p_t}{\partial^{2} {x_{{c}}}}} + p_t ^{3}  {\frac {\partial ^{2}p_t }{\partial^{2} {x_{{a}}}}}
  {\frac {\partial p_t}{\partial x_{{a}}}} {\frac {\partial ^{3}p_t}{\partial^{2} {x_{{c}}}\partial x_{{b}}}}
 - p_t ^{2}  {\frac {\partial ^{3}p_t}{\partial^{2} {x_{{c}}}\partial x_{{b}}}}
 ({\frac {\partial p_t}{\partial x_{{a}}}}) ^{3}+ p_t ^{3}
  {\frac {\partial ^{4}p_t}{\partial^{2} {x_{{c}}}\partial^{2} {x_{{a}}}}}
  {\frac {\partial p_t}{\partial x_{{a}}}} {\frac {\partial p_t}{\partial x_{{b}}}} -2\, p_t ^{2}
   {\frac {\partial ^{2}p_t }{\partial^{2} {x_{{a}}}}}
   {\frac {\partial p_t}{\partial x_{{a}}}}
    {\frac {\partial p_t}{\partial x_{{b}}}}  {\frac {\partial ^{2}p_t}{\partial^{2} {x_{{c}}}}} + p_t ^{3}  {\frac {\partial ^{2}p_t }{\partial^{2} {x_{{a}}}}}    {\frac {\partial ^{3}p_t}{\partial^{2} {x_{{c}}}\partial x_{{a}}}}   {\frac {\partial p_t}{\partial x_{{b}}}} -3\, p_t ^{2}
     {\frac {\partial p_t}{\partial x_{{b}}}}
     ({\frac {\partial p_t}{\partial x_{{a}}}}) ^{2}{\frac {\partial ^{3}p_t}{\partial^{2} {x_{{c}}}\partial x_{{a}}}} +3\,p_t   {\frac {\partial p_t}{\partial x_{{b}}}}
     ({\frac {\partial p_t}{\partial x_{{a}}}}) ^{3}{\frac {\partial ^{2}p_t}{\partial^{2} {x_{{c}}}}}
$

$\mathcal{R}^{(2)}_{18,a,b,c}=\frac{1}{2}\, p_t ^{3}
 ({\frac {\partial p_t}{\partial x_{{a}}}}) ^{2}{\frac {\partial ^{4}p_t}{\partial^{2} {x_{{c}}}\partial^{2} {x_{{b}}}}} + p_t ^{3}
 {\frac {\partial p_t}{\partial x_{{a}}}}
  {\frac {\partial ^{2}p_t}{\partial^{2} {x_{{b}}}}}  {\frac {\partial ^{3}p_t}{\partial^{2} {x_{{c}}}\partial x_{{a}}}} - p_t ^{2}
  ({\frac {\partial p_t}{\partial x_{{a}}}}) ^{2}
   {\frac {\partial ^{2}p_t}{\partial^{2} {x_{{b}}}}}  {\frac {\partial ^{2}p_t}{\partial^{2} {x_{{c}}}}} + p_t ^{3}  {\frac {\partial ^{2}p_t}{\partial x_{{b}}\partial x_{{a}}}}
    {\frac {\partial ^{3}p_t}{\partial^{2} {x_{{c}}}\partial x_{{b}}}}  {\frac {\partial p_t}{\partial x_{{a}}}} -2\, p_t ^{2}
    ({\frac {\partial p_t}{\partial x_{{a}}}}) ^{2}
     {\frac {\partial p_t}{\partial x_{{b}}}}  {\frac {\partial ^{3}p_t}{\partial^{2} {x_{{c}}}\partial x_{{b}}}} + p_t ^{3} {\frac {\partial ^{4}p_t}{\partial^{2} {x_{{c}}}\partial x_{{b}}\partial x_{{a}}}}
      {\frac {\partial p_t}{\partial x_{{b}}}}  {\frac {\partial p_t}{\partial x_{{a}}}} -2\, p_t ^{2}
       {\frac {\partial ^{2}p_t}{\partial x_{{b}}\partial x_{{a}}}}
       {\frac {\partial p_t}{\partial x_{{b}}}}
       {\frac {\partial p_t}{\partial x_{{a}}}} {\frac {\partial ^{2}p_t}{\partial^{2} {x_{{c}}}}} + p_t ^{3}
        {\frac {\partial ^{2}p_t}{\partial x_{{b}}\partial x_{{a}}}}
        {\frac {\partial p_t}{\partial x_{{b}}}}  {\frac {\partial ^{3}p_t}{\partial^{2} {x_{{c}}}\partial x_{{a}}}}
 -2\, p_t ^{2}
 {\frac {\partial p_t}{\partial x_{{a}}}}
 ( {\frac {\partial p_t}{\partial x_{{b}}}}  ) ^{2}{\frac {\partial ^{3}p_t}{\partial^{2} {x_{{c}}}\partial x_{{a}}}} +3\,p_t  ({\frac {\partial p_t}{\partial x_{{a}}}}) ^{2}
 ( {\frac {\partial p_t}{\partial x_{{b}}}}  ) ^{2}{\frac {\partial ^{2}p_t}{\partial^{2} {x_{{c}}}}}
$

$\mathcal{R}^{(2)}_{19,a,b,c}=\frac{1}{2}\, p_t ^{3}
( {\frac {\partial p_t}{\partial x_{{b}}}}  ) ^{2}{\frac {\partial ^{4}p_t}{\partial^{2} {x_{{c}}}\partial^{2} {x_{{a}}}}} + p_t ^{3}
 {\frac {\partial p_t}{\partial x_{{b}}}}     {\frac {\partial ^{2}p_t }{\partial^{2} {x_{{a}}}}} {\frac {\partial ^{3}p_t}{\partial^{2} {x_{{c}}}\partial x_{{b}}}} - p_t ^{2}
 ( {\frac {\partial p_t}{\partial x_{{b}}}}  ) ^{2}
  {\frac {\partial ^{2}p_t }{\partial^{2} {x_{{a}}}}} {\frac {\partial ^{2}p_t}{\partial^{2} {x_{{c}}}}} + p_t ^{3}
 {\frac {\partial ^{4}p_t}{\partial^{2} {x_{{c}}}\partial x_{{b}}\partial x_{{a}}}}
  {\frac {\partial p_t}{\partial x_{{b}}}}   {\frac {\partial p_t}{\partial x_{{a}}}} + p_t ^{3}
  {\frac {\partial ^{2}p_t}{\partial x_{{b}}\partial x_{{a}}}}
   {\frac {\partial ^{3}p_t}{\partial^{2} {x_{{c}}}\partial x_{{b}}}}  {\frac {\partial p_t}{\partial x_{{a}}}} + p_t ^{3}
    {\frac {\partial ^{2}p_t}{\partial x_{{b}}\partial x_{{a}}}}
    {\frac {\partial p_t}{\partial x_{{b}}}}   {\frac {\partial ^{3}p_t}{\partial^{2} {x_{{c}}}\partial x_{{a}}}} -2\, p_t ^{2}
     {\frac {\partial ^{2}p_t}{\partial x_{{b}}\partial x_{{a}}}}
       {\frac {\partial p_t}{\partial x_{{b}}}}
       {\frac {\partial p_t}{\partial x_{{a}}}} {\frac {\partial ^{2}p_t}{\partial^{2} {x_{{c}}}}} -2\, p_t ^{2}
       {\frac {\partial p_t}{\partial x_{{a}}}}
       ( {\frac {\partial p_t}{\partial x_{{b}}}}  ) ^{2}{\frac {\partial ^{3}p_t}{\partial^{2} {x_{{c}}}\partial x_{{a}}}}
 -2\, p_t ^{2} ({\frac {\partial p_t}{\partial x_{{a}}}}) ^{2}
  {\frac {\partial p_t}{\partial x_{{b}}}}   {\frac {\partial ^{3}p_t}{\partial^{2} {x_{{c}}}\partial x_{{b}}}} +3\,p_t  ({\frac {\partial p_t}{\partial x_{{a}}}}) ^{2} ( {\frac {\partial p_t}{\partial x_{{b}}}}  ) ^{2}{\frac {\partial ^{2}p_t}{\partial^{2} {x_{{c}}}}}
$

$\mathcal{R}^{(2)}_{20,a,b,c}= p_t ^{3}
 {\frac {\partial ^{4}p_t}{\partial^{2} {x_{{c}}}\partial^{2} {x_{{b}}}}}
 {\frac {\partial p_t}{\partial x_{{a}}}} {\frac {\partial p_t}{\partial x_{{b}}}} + p_t ^{3}
  {\frac {\partial ^{2}p_t}{\partial^{2} {x_{{b}}}}}
  {\frac {\partial p_t}{\partial x_{{a}}}} {\frac {\partial ^{3}p_t}{\partial^{2} {x_{{c}}}\partial x_{{b}}}} + p_t ^{3}
   {\frac {\partial ^{2}p_t}{\partial^{2} {x_{{b}}}}}    {\frac {\partial ^{3}p_t}{\partial^{2} {x_{{c}}}\partial x_{{a}}}}   {\frac {\partial p_t}{\partial x_{{b}}}}
-2\, p_t ^{2}
  {\frac {\partial ^{2}p_t}{\partial^{2} {x_{{b}}}}}
  {\frac {\partial p_t}{\partial x_{{a}}}}
   {\frac {\partial p_t}{\partial x_{{b}}}}  {\frac {\partial ^{2}p_t}{\partial^{2} {x_{{c}}}}} + p_t ^{3}  {\frac {\partial ^{2}p_t}{\partial x_{{b}}\partial x_{{a}}}}
   {\frac {\partial p_t}{\partial x_{{b}}}}  {\frac {\partial ^{3}p_t}{\partial^{2} {x_{{c}}}\partial x_{{b}}}} -3\, p_t ^{2}
    {\frac {\partial ^{3}p_t}{\partial^{2} {x_{{c}}}\partial x_{{b}}}}
    {\frac {\partial p_t}{\partial x_{{a}}}}
    ( {\frac {\partial p_t}{\partial x_{{b}}}}  ) ^{2}
+\frac{1}{2}\, p_t ^{3} {\frac {\partial ^{4}p_t}{\partial^{2} {x_{{c}}}\partial x_{{b}}\partial x_{{a}}}}   ( {\frac {\partial p_t}{\partial x_{{b}}}}  ) ^{2}
- p_t ^{2}
 {\frac {\partial ^{2}p_t}{\partial x_{{b}}\partial x_{{a}}}}
 ( {\frac {\partial p_t}{\partial x_{{b}}}}  ) ^{2}{\frac {\partial ^{2}p_t}{\partial^{2} {x_{{c}}}}}
 - p_t ^{2}  {\frac {\partial ^{3}p_t}{\partial^{2} {x_{{c}}}\partial x_{{a}}}}
 ( {\frac {\partial p_t}{\partial x_{{b}}}}  ) ^{3}+3\,p_t
 {\frac {\partial p_t}{\partial x_{{a}}}}  ( {\frac {\partial p_t}{\partial x_{{b}}}}  ) ^{3}{\frac {\partial ^{2}p_t}{\partial^{2} {x_{{c}}}}}
$
}

%
%
%
%
%
%
%


\begin{thebibliography}{99}

\bibitem{Shannon1948}
C.E. Shannon, ``A mathematical theory of communications,'' Bell Syst. Tech. J., Vol. 27, no. 3, pp. 379-423, Jul. 1948.

\bibitem{Stam1959}
A.J. Stam, ``Some inequalities satisfied by the quantities of information of Fisher and Shannon,'' Inf. Control, Vol. 2, no. 2, pp. 101-112, 1959.

\bibitem{Blachman1965}
N.M. Blachman, ``The convolution inequality for entropy powers,'' IEEE Trans. Inf. Theory, Vol. 11, no. 2, pp. 267-271, Apr. 1965.

\bibitem{Lieb1978}
E.H. Lieb, ``Proof of an entropy conjecture of Wehrl,'' Commun. Math. Phys., Vol. 62, no. 1, pp. 35-41, 1978.

\bibitem{VerduGuo2006}
S. Verd\'{u} and D. Guo, ``A simple proof of the entropy-power inequality,'' IEEE Trans. Inf. Theory, Vol. 52, no. 5, pp. 2165-2166, May 2006.

\bibitem{Rioul2011}
O. Rioul, ``Information theoretic proofs of entropy power inequalities,'' IEEE Trans. Inf. Theory, Vol. 57, no. 1, pp. 33-55, Jan. 2011.

\bibitem{Bergmans1974}
P. P. Bergmans, ``A simple converse for broadcast channels with additive white Gaussian noise,'' IEEE Trans. Inform. Theory, Vol. IT-20, pp. 279-280, Mar. 1974.

\bibitem{Zamir1993}
R. Zamir and M. Feder, ``A generalization of the entropy power with respect to added Gaussian noise,'' IEEE Trans. INf. Theory, Vol.39, no. 5, pp. 1723-1728, Sep. 1993.

\bibitem{Liu2007}
T. Liu and P. Viswanath, ``An extremal inequality motivated by multiterminal information-theoretic problems,'' IEEE Trans. Inf. Theory, Vol. 53, no. 5, pp. 1839-1851, May 2007.

\bibitem{Wang2013}
L. Wang and M. Madiman, ``A new approach to the entropy power inequality, via rearrangements,'' in Proc. IEEE Ins. Symp. Inf. Theory, Jul. 2013, pp. 599-603.

\bibitem{Costa1985}
M. H. M. Costa, ``A new entropy power ineqaulity,'' IEEE Trans. Inf. Theory, Vol. 31, no. 6, pp. 751-760, Nov. 1985.

\bibitem{Costa19852}
M. H. M. Costa, ``On the Gaussian interference channel.'' IEEE Trans. Inform. Theory, Vol. IT-31, pp. 607-615, Sept. 1985.

\bibitem{Dembo1989}
A. Dembo, ``Simple proof of the concavity of the entropy power with respect to added Gaussian noise,'' IEEE Trans. Inf. Theory, Vol. 35, no. 4, pp. 887-888, Jul. 1989.

\bibitem{Villani2000}
C. Villani, ``A short proof of the `concavity of entropy power','' IEEE Trans. Inf. Theory, Vol. 46, no. 4, pp. 1695-1696, Jul. 2000.


\bibitem{McKean1966}
 H.~P. McKean, Jr.,
 ``Speed of approach to equilibrium for Kacs caricature of a Maxwellian gas,"
  Arch. Rational Mech. Anal., 21:343-367, 1966.

\bibitem{Cheng2015}
F. Cheng and Y. Geng, ``Higher order derivatives in Costa's entropy power inequality,'' IEEE Trans. Inf. Theory, 2015.

\bibitem{Amazigo1980}
J. Amazigo and L. Rubenfeld, Advanced Caculas and its Applications to the Engineering and Physical Sciences. New York: Academic, 1974.

\bibitem{Boyd1}
S. Boyd and L. Vandenberghe, Convex optimization. Cambridge university press, 2004.

\bibitem{Boyd2}
 L. Vandenberghet and S. Boyd,
 ``Semidefinite programming,''
 SIAM Review, Vol. 38, no. 1, pp. 49-95, 1996.

%

\bibitem{grant2008}
M. Grant, S. Boyd, Y. Ye, CVX: Matlab software for disciplined convex programming, 2008.

%

\end{thebibliography}
\end{document}